\chardef\bslash=`\\ 
\newtheorem{thm}{Theorem}[section]
\newtheorem{cor}[thm]{Corollary}
\newtheorem{lem}[thm]{Lemma}
\newtheorem{prop}[thm]{Proposition}
\theoremstyle{definition}
\newtheorem{defn}{Definition}[section]
\theoremstyle{remark}
\newcommand{\thmref}[1]{Theorem~\ref{#1}}
\newcommand{\lemref}[1]{Lemma~\ref{#1}}
\newcommand{\propref}[1]{Proposition~\ref{#1}}
\newcommand{\corref}[1]{Corollary~\ref{#1}}
\newcommand{\eval}[2][\right]{\relax
  \ifx#1\right\relax \left.\fi#2#1\rvert}
\def\rsd{\mbox{$\mathop{\mathrel\times\joinrel\mathrel{\vrule height 5pt
 depth 0pt}} $}}
\def\lsd{\mbox{$\mathop{\mathrel{\vrule height 5pt
 depth 0pt}\joinrel\mathrel\times} $}}
\title{CARTAN SUBALGEBRAS IN C$^*$-ALGEBRAS}
\author{Jean Renault}
\address{D\'epartment de Math\'ematiques, Universit\'e d'Orl\'eans,
45067 Orl\'eans, France}
\email{Jean.Renault@univ-orleans.fr}
\keywords{Masas. Pseudogroups. Cartan subalgebras. Essentially principal groupoids.}
\subjclass{Primary 37D35; Secondary 46L85.}
\begin{document}
\vskip5mm
\begin{abstract}
According to J.~Feldman and C.~Moore's well-known theorem on Cartan subalgebras, a variant of the group
measure space construction gives an equivalence of categories between twisted countable
standard measured equivalence relations and Cartan pairs, i.e. a von Neumann algebra
(on a separable Hilbert space) together with a Cartan subalgebra. A.~Kumjian gave a
C$^*$-algebraic analogue of this theorem in the early eighties. After a short survey of
maximal abelian self-adjoint subalgebras in operator algebras, I present a
natural definition of a Cartan subalgebra in a C$^*$-algebra and an
extension of Kumjian's theorem which covers graph algebras and some foliation algebras.
\end{abstract} 

\maketitle
\markboth{Renault}
{Cartan subalgebras}

\renewcommand{\sectionmark}[1]{}

\section{Introduction}

One of the most fundamental constructions in the theory of operator algebras, namely the
crossed product construction, provides a subalgebra, i.e. a pair $(B,A)$ consisting of
an operator algebra $A$ and a subalgebra $B\subset A$, where $B$ is the original
algebra. The inclusion $B\subset A$ encodes the symmetries of the original dynamical
system. An obvious and naive question is to ask whether a given subalgebra arises from
some crossed product construction. From the very construction of the crossed product, a
necessary condition is that $B$ is regular in $A$, which means that $A$ is generated by
the normalizer of $B$. In the case of a crossed product by a group, duality theory
provides an answer (see Landstad
\cite{lan:duality}) which requires an external information, namely the dual action. Our
question is more in line with subfactor theory, where one extracts an algebraic object
(such as a paragroup or a quantum groupoid) solely from an inclusion of factors. Under
the assumption that $B$ is maximal abelian, the problem is somewhat more tractable. The
most satisfactory result in this direction is the Feldman-Moore theorem
\cite[Theorem 1]{fm:relations II}, which characterizes the subalgebras arising from the
construction of the von Neumann algebra of a measured countable equivalence relation.
These subalgebras are precisely the Cartan subalgebras, a nice kind of maximal abelian
self-adjoint subalgebras ({\it masas}) introduced previously by Vershik in
\cite{ver:cartan}: they are regular and there exists a faithful normal conditional
expectation of $A$ onto
$B$. The Cartan subalgebra contains exactly the same information as the equivalence
relation. This theorem leaves pending a number of interesting and difficult questions.
For example, the existence or the uniqueness of Cartan subalgebras in a given von
Neumann algebra. Another question is to determine if the equivalence relation arises
from a free action of a countable group and if one can expect uniqueness of the group.
Let us just say that there have been some recent breakthroughs on this question (Popa et alii). 

It was then natural to find a counterpart of the Feldman-Moore theorem for
C$^*$-algebras. In \cite{kum:diagonals}, Kumjian introduced  the notion of
a C$^*$-diagonal as the C$^*$-algebraic counterpart of a Cartan subalgebra and showed
that, via the groupoid
algebra construction, they correspond exactly to twisted \'etale equivalence relations.
A key ingredient of his theorem is his definition of the normalizer of a subalgebra (a
definition in terms of unitaries or partial isometries would be too restrictive). His
fundamental result, however, does not cover a number of important examples. For
example, Cuntz algebras, and more generally graph algebras, have obvious regular
{\it masas} which are not C$^*$-diagonals. The same is true for foliations algebras (or
rather their reduction to a full transversal). The reason is that the groupoids from
which they are constructed are essentially principal but not principal: they have some
isotropy that cannot be eliminated. It seems that, in the topological context,
essentially principal groupoids are more natural than principal groupoids (equivalence
relations). They are exactly the groupoids of germs of pseudogroups. Groupoids of germs
of pseudogroups present a technical difficulty: they may fail to be Hausdorff (they are
Hausdorff if and only if the pseudogroup is quasi-analytical). For the sake of
simplicity, our discussion will be limited to the Hausdorff case. We refer the
interested reader to a forthcoming paper about the non-Hausdorff case. A natural
definition of a Cartan subalgebra in the C$^*$-algebraic context is that it is a
{\it masa} which is regular and which admits a faithful conditional expectation. We show
that in the reduced C$^*$-algebra of an essentially principal Hausdorff \'etale groupoid
(endowed with a twist), the subalgebra corresponding to the unit space is a Cartan
subalgebra. Conversely, every Cartan subalgebra (if it exists!) arises in that fashion and completely
determines the groupoid and the twist. Our proof closely follows Kumjian's. The comparison with Kumjian's theorem shows that a Cartan subalgebra has the unique extension
property if and only if the corresponding groupoid is principal. As a corollary of
the main result, we obtain that a Cartan subalgebra has a unique conditional
expectation, which is clear when the subalgebra has the unique extension property but
not so in the general case.

Here is a brief description of the content of this paper. In Section 2, I will review
some basic facts about {\it masas} in von Neumann algebras, the Feldman-Moore
theorem and some more recent results on Cartan subalgebras. In Section 3, I will review the characterization of essentially principal groupoids as groupoids of germs of pseudogroups of local homeomorphisms.  In Section 4, I will review  the construction of the reduced C$^*$-algebra of a locally compact Hausdorff groupoid $G$ with Haar system and endowed with a twist. I will show that, when $G$ is \'etale, the subalgebra of the unit space is a {\it masa} if and only if $G$ is essentially principal.
In fact, this is what we call a Cartan subalgebra in the C$^*$-algebraic context: it
means a {\it masa} which is regular and which has a faithful conditional expectation.
In Section 5, we show the converse: every Cartan subalgebra arises from an
essentially principal
\'etale groupoid endowed with a twist. This groupoid together with its twist is a
complete isomorphism invariant of the Cartan subalgebra. We end with examples of Cartan subalgebras in C$^*$-algebras. 

This paper is a written version of a talk given at OPAW2006 in Belfast. I heartily thank the organizers, M.~Mathieu and I.~Todorov, for the invitation and the participants, in particular P.~Resende, for stimulating discussions. I also thank A.~Kumjian and I.~Moerdijk for their interest and their help. 

\section{Cartan subalgebras in von Neumann algebras}

The basic example of a {\it masa} in an operator algebra is the subalgebra $D_n$ of
diagonal matrices in the algebra $M_n$ of complex-valued $(n,n)$-matrices. Every
{\it masa} in
$M_n$ is conjugated to it by a unitary (this is essentially the well-known result
that every normal complex matrix admits an orthonormal basis of eigenvectors). The
problem at hand is to find suitable generalizations of this basic example.

The most immediate generalization is to replace ${\bf C}^n$ by an infinite dimensional
separable Hilbert space
$H$ and $M_n$ by the von Neumann algebra ${\mathcal B}(H)$ of
all bounded linear operators on
$H$. The spectral theorem tells us that, up to conjugation by a unitary, {\it masas} in
${\mathcal B}(H)$ are of the form
$L^\infty(X)$, acting by multiplication on
$H=L^2(X)$, where $X$ is an infinite standard measure space. Usually, one distinguishes
the case of $X=[0,1]$ endowed with Lebesgue measure and the case of $X={\bf N}$  endowed
with counting measure. In the first case, the {\it masa} is called diffuse and in the
second case, it is called atomic. Atomic {\it masas} $A$ in ${\mathcal B}(H)$ can be
characterized by the existence of a normal conditional expectation $P: {\mathcal
B}(H)\rightarrow A$. Indeed, when $H=\ell^2({\bf N})$, operators are given by matrices
and $P$ is the restriction to the diagonal. What we are looking for is precisely a
generalization of these atomic {\it masas}.

There is no complete classification of {\it masas} in non-type I factors. In fact, the
study of {\it masas} in non-type I factors looks like a rather formidable
task. In 1954, J.~Dixmier \cite{dix:masa} discovered the existence of non-regular {\it masas}.
A {\it masa}
$A$ in a von Neumann algebra $M$ is called {\it regular} if its normalizer $N(A)$ (the
group of unitaries $u$ in $M$ which normalize $A$, in the sense that $uAu^*=A$)
generates $M$ as a von Neumann algebra. On the other hand, it is called {\it singular}
if $N(A)$ is contained in $A$. When $N(A)$ acts ergodically on $A$, the {\it masa} $A$ is
called {\it semi-regular}. Every {\it masa} in ${\mathcal B}(H)$ (or in a type $I$ von Neumann
algebra) is regular. Dixmier gave an example of a singular {\it masa} in the hyperfinite
$II_1$ factor (check if it is the image of a normal conditional expectation). Thus, in
order to generalize the atomic {\it masas} of ${\mathcal B}(H)$, we shall consider {\it masas}
which are regular and the image of a normal conditional expectation:

\begin{defn} (Vershik\cite{ver:cartan}, Feldman-Moore\cite[Definition 3.1]{fm:relations
II}) An abelian subalgebra $A$ of a von Neumann algebra $M$ is called a {\it Cartan
subalgebra} if
\begin{enumerate}
\item $A$ is a {\it masa};
\item $A$ is regular;
\item there exists a faithful normal conditional expectation of $M$ onto $A$. 
\end{enumerate}
\end{defn}

Cartan subalgebras are intimately related to ergodic theory. Indeed, if $M$ arises by
the classical group measure construction from a free action of a discrete countable
group $\Gamma$ on a measure space $(X,\mu)$, then $L^\infty(X,\mu)$ is naturally
imbedded in $M$ as a Cartan subalgebra (\cite{mvn:rings}). Following generalizations by
G.~Zeller-Meier \cite[Remarque 8.11]{zm:croises}, W.~Krieger \cite{kri:type III} and
P.~Hahn
\cite{hah:regular}, J.~Feldman and C.~Moore give in \cite{fm:relations II} the most
direct construction of Cartan subalgebras. It relies on the notion of a countable
standard measured equivalence relation. Here is its definition: $(X,{\mathcal B},\mu)$
is a standard measured space and
$R$ is an equivalence relation on $X$ such that
its classes are countable, its graph $R$ is a Borel subset of $X\times X$ and the
measure $\mu$ is quasi-invariant under $R$. The last condition means that the measures
$r^*\mu$ and $s^*\mu$ on $R$ are equivalent (where
$r,s$ denote respectively the first and the second projections of $R$ onto $X$ and
$r^*\mu(f)=\int\sum_yf(x,y)d\mu(x)$ for a positive Borel function $f$ on $R$). The orbit
equivalence relation of an action of a discrete countable group $\Gamma$ on a measure
space $(X,\mu)$ preserving the measure class of $\mu$ is an example (in fact, according
to \cite[Theorem 1]{fm:relations I}, it is the most general example) of a countable
standard equivalence relation. The construction of the von Neumann algebra $M=W^*(R)$
mimicks the construction of the algebra of matrices $M_n$. Its elements are complex
Borel functions on $R$, the product is matrix multiplication and involution is the
usual matrix conjugation. Of course, in order to have an involutive algebra of bounded
operators, some conditions are required on these functions: they act by left
multiplication as operators on
$L^2(R,s^*\mu)$ and we ask these operators to be bounded. The subalgebra $A$ of
diagonal matrices (functions supported on the diagonal of $R$), which is isomorphic to
$L^\infty(X,\mu)$, is a Cartan subalgebra of
$M$. When $X={\bf N}$ and $\mu$ is the counting measure, one retrieves the atomic {\it masa}
of ${\mathcal B}(\ell^2({\bf N}))$. This construction can be twisted by a 2-cocycle
$\sigma\in Z^2(R,{\bf T})$; explicitly, $\sigma$ is a Borel function on
$R^{(2)}=\{(x,y,z)\in X\times X\times X: (x,y),(y,z)\in R\}$ with values in the group
of complex numbers of module 1 such that
$\sigma(x,y,z)\sigma(x,z,t)=\sigma(x,y,t)\sigma(y,z,t)$. The only modification is to
define as product the twisted matrix multiplication $f*g(x,z)=\sum
f(x,y)g(y,z)\sigma(x,y,z)$. This yields the von Neumann algebra $M=W^*(R,\sigma)$ and
its Cartan subalgebra $A=L^\infty(X,\mu)$ of diagonal matrices. The Feldman-Moore
theorem gives the converse.

\begin{thm}\cite[Theorem 1]{fm:relations II} Let $A$ be a Cartan subalgebra of a von
Neumann algebra
$M$ on a separable Hilbert space. Then there exists a countable standard
measured equivalence relation $R$ on $(X,\mu)$, a $\sigma\in Z^2(R,{\bf T})$ and an
isomorphism of $M$ onto $W^*(R,\sigma)$ carrying $A$ onto the diagonal subalgebra
$L^\infty(X,\mu)$. The twisted relation $(R,\sigma)$ is unique up to isomorphism.
\end{thm}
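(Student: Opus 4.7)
The plan is to reconstruct $(X,\mu)$, $R$, and $\sigma$ directly from the data $(M,A)$, using the conditional expectation $E:M\to A$ to organise matrix coefficients along the orbits of normalizers.

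First, since $A$ is an abelian von Neumann algebra with a separable predual, the spectral theorem provides a standard probability space $(X,\mu)$ and an isomorphism $A\cong L^\infty(X,\mu)$; this will be the diagonal. Each $u\in N(A)$ implements an automorphism $\mathrm{Ad}(u)$ of $A$, hence a nonsingular Borel isomorphism $T_u$ between conull subsets of $X$. The plan is to take $R$ to be the equivalence relation generated by the graphs of the $T_u$. Separability of the Hilbert space and of $L^2(M,\tau)$-type norms on $M$ (obtained from a faithful normal state on $A$ composed with $E$) will let me choose a \emph{countable} subset $\{u_n\}\subset N(A)$ such that the countable union $R=\bigcup_n \mathrm{graph}(T_{u_n})$ is already an equivalence relation mod null sets, is a Borel subset of $X\times X$, and has quasi-invariant $\mu$ (quasi-invariance follows from the fact that each $T_u$ is nonsingular). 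This gives a countable standard measured equivalence relation.

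Next, I represent elements of $M$ as functions on $R$. Given $m\in M$, and a normalizer $u\in N(A)$ with partial transformation $T_u$, the element $E(mu^*)\in A$ is a bounded Borel function on $X$; I declare its value at $x$ to be the matrix coefficient of $m$ at the pair $(x,T_u^{-1}(x))\in R$. Fitting these partial assignments together via a Borel partition of $R$ subordinate to the graphs of $u_n$ yields a Borel function $\widehat m:R\to\mathbf{C}$. The faithfulness of $E$ and the fact that $N(A)$ generates $M$ ensure that $m\mapsto\widehat m$ is injective and that every element of $M$ has nonzero matrix coefficients only along countably many orbit slices. A direct computation, using $E(aub)=a\,E(u)\,\mathrm{Ad}(u)(b)$ for $a,b\in A$ and $u\in N(A)$, identifies the product in $M$ with a twisted matrix multiplication $\widehat m_1 *_\sigma \widehat m_2$ on $R$: for each composable pair $u,v$ of normalizers, $uv$ is a normalizer with $T_{uv}=T_uT_v$, and the comparison between $uv$ and the ``canonical'' normalizer implementing $T_uT_v$ lives in the unitary group of $A$, producing a $\mathbf{T}$-valued Borel function $\sigma$ on $R^{(2)}$. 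Associativity $(uv)w=u(vw)$ translates into the 2-cocycle identity; involution matches after a choice of section and possibly altering $\sigma$ by a coboundary. The isomorphism $M\to W^*(R,\sigma)$ then carries $A$ onto the diagonal.

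For uniqueness, if $(R',\sigma')$ is another solution, the identification of $L^\infty(X,\mu)$ inside both $W^*(R,\sigma)$ and $W^*(R',\sigma')$ with the same $A$ forces, on the level of normalizers, a bijection between the countable sets of partial transformations, hence a measured groupoid isomorphism $R\to R'$; the cocycles agree up to a coboundary coming from the choice of lifts of partial transformations to normalizing unitaries.

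The main obstacle I expect is the measurable bookkeeping in the construction of $R$ and $\sigma$: one must simultaneously (a) extract a countable family of normalizers whose partial transformations generate $R$ and cover $M$ via $E$, (b) choose Borel sections from $R$ back to unitaries in $M$ to define $\sigma$ as a genuine Borel 2-cocycle, and (c) handle null sets so that all identities (cocycle, multiplication, involution) hold modulo $\mu$. Once these Borel selection issues are settled, the algebraic verifications mimic the matrix computation that motivates $W^*(R,\sigma)$ in the first place.
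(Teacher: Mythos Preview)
The paper does not actually prove this theorem: it is quoted from Feldman--Moore, and the only comment is that ``the main lines of the proof will be found in the C$^*$-algebraic version of this result.'' So there is no proof here to compare against directly; what one can do is compare your sketch with the paper's proof of its C$^*$-analogue (\thmref{Cartan}).

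At that level your outline is in the right spirit and largely parallel: normalizers give partial transformations of $X=\hat A$, these assemble into the relation/groupoid, and the conditional expectation produces the matrix coefficients via $E(n^*a)$ (the paper's formula $\hat a(x,n,y)=P(n^*a)(y)/\sqrt{n^*n(y)}$ is exactly your ``matrix coefficient along a normalizer''). The one genuine methodological difference is your treatment of the twist. You propose to choose a Borel section from partial transformations back to normalizing unitaries and read off an explicit $2$-cocycle $\sigma$ from the discrepancy $uv\,\cdot\,(\text{canonical lift of }T_uT_v)^{-1}$; you correctly flag the Borel selection issues as the main obstacle. The paper's C$^*$-algebraic proof avoids this entirely: instead of a cocycle it builds the twist as an \emph{extension} $\Sigma(B)=\{[x,n,y]\}$, where the equivalence $[x,n,y]=[x,n',y]$ is defined by $nb=n'b'$ for some $b,b'\in B$ positive at $y$. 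This packages the cocycle data intrinsically, with no section and no coboundary ambiguity, and the evaluation map is then defined on $\Sigma(B)$ rather than on $R$ with a separate $\sigma$. In the measurable setting your cocycle approach is the classical one (and is what Feldman--Moore actually do), but it is worth noting that the extension viewpoint is what makes the argument clean enough to run in the topological category.
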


The main lines of the proof will be found in the C$^*$-algebraic
version of this result. This theorem completely elucidates the
structure of Cartan subalgebras. It says nothing about the existence and the uniqueness of
Cartan subalgebras in a given von Neumann algebra. We have seen that in
${\mathcal B}(H)$ itself, there exists a Cartan subalgebra, which is unique up to conjugacy.
The same result holds in every injective von Neumann algebra. More precisely, two
Cartan subalgebras of an injective von Neumann algebra are always conjugate by an
automorphism (but not always inner conjugate, as observed in \cite{fm:relations II}).
This important uniqueness result appears as 
\cite[Corollary 11]{cfw:amenable}. W.~Krieger had previously shown in \cite[Theorem 8.4]{kri:isomorphism} that two Cartan subalgebras of a von Neumann algebra $M$ which
produce hyperfinite (\cite[Definition 4.1]{fm:relations I}) equivalence relations are
conjugate (then, $M$ is necessarily hyperfinite). On the other hand, it is not 
difficult to show that a Cartan subalgebra of an injective von Neumann algebra produces
an amenable (\cite[Definition 6]{cfw:amenable}) equivalence
relation. Since Connes-Feldman-Weiss's theorem  states that an equivalence relation is
amenable if and only if it is hyperfinite, Krieger's uniqueness theorem can be applied.
The general situation is more
complex. Here are some results related to Cartan subalgebras of type
$II_1$ factors. In
\cite{cj:two}, A.~Connes and V.~Jones give an example of a $II_1$ factor with at least
two non-conjugate Cartan subalgebras. Then S.~Popa constructs in \cite{pop:rigidity} a
$II_1$ factor with uncountably many non-conjugate Cartan subalgebras. These examples
use Kazhdan's property $T$. In \cite{voi:cartan}, D.~Voiculescu shows that for
$n\ge 2$, the von Neumann algebra $L({\bf F}_n)$ of the free group ${\bf F}_n$ on $n$
generators  has no Cartan subalgebra.  Despite these rather negative results, it
seems that the notion of Cartan subalgebra still has a r\^ole to play in the theory of
$II_1$ factors. For example, S.~Popa has recently (see \cite{pop:rigidity II})
constructed and studied a large class of type $II_1$ factors (from Bernoulli
actions of groups with property $T$) which have a distinguished Cartan
subalgebra, unique up to inner conjugacy. Still more recently N.~Ozawa and
S.~Popa give in
\cite{op:cartan} on one hand many examples of $II_1$ factors which do not have any Cartan
subalgebra and on the other hand a new class of $II_1$ factors which have a
unique Cartan subalgebra, in fact unique not only up to conjugacy but to inner conjugacy. This class
consists of all the profinite ergodic probability preserving actions of free
groups ${\bf F}_n$ with $n\ge 2$.

\section{Essentially principal groupoids.}

The purpose of this section is mainly notational. It recalls elementary facts about \'etale groupoids and pseudogroups of homeomorphisms. Concerning groupoids, we shall use the notation of \cite{dr:amenable}. Other
relevant references are \cite{ren:approach} and \cite{pat:groupoids}.
Given a groupoid $G$, $G^{(0)}$ will
denote its unit space and $G^{(2)}$ the set of composable pairs. Usually,
elements of $G$ will be denoted by Greek letters as $\gamma$ and
elements of $G^{(0)}$ by Roman letters as $x,y$. The range and
source maps from $G$ to $G^{(0)}$ will be denoted respectively by $r$ and
$s$. The fibers of the range and source maps are denoted respectively
$G^x=r^{-1}(x)$ and $G_y=s^{-1}(y)$. The inverse map $G\rightarrow G$ is written
$\gamma\mapsto\gamma^{-1}$, the inclusion map $G^{(0)}\rightarrow G$ is
written $x\mapsto x$ and the product map $G^{(2)}\rightarrow G$ is
written $(\gamma,\gamma')\mapsto \gamma\gamma'$. 
 The {\it isotropy bundle} is $G'=\{\gamma\in
G:r(\gamma)=s(\gamma)\}$. 

In the topological setting, we
assume that the groupoid $G$ is a topological space and that the structure
maps are continuous, where $G^{(2)}$ has the topology induced by $G\times
G$ and $G^{(0)}$ has the  topology induced by $G$. We assume furthermore
that the range and source maps are surjective and open.  A topological groupoid
$G$ is called {\it \'etale} when its range or source maps are local homeomorphisms
from $G$ onto $G^{(0)}$. 

Recall that a subset $A$ of a groupoid
$G$ is called an {\it $r$-section} [resp. a {\it $s$-section}] if the restriction of
$r$ [resp. $s$] to $A$ is injective. A {\it bisection} is a subset $S\subset G$ which is
both an $r$-section and a $s$-section. If
$G$ is an  \'etale topological groupoid, it has a cover of open bisections. A
bisection $S$ defines a map $\alpha_S: s(S)\rightarrow r(S)$ such that
$\alpha_S(x)=r(Sx)$ for $x\in s(S)$. If moreover $G$ is \'etale and
$S$ is an open bisection, this map is a
homeomorphism. The open bisections of an \'etale groupoid $G$ form an {\it
inverse semigroup} ${\mathcal S}={\mathcal S}(G)$: the composition law is
$$ST=\{\gamma\gamma': (\gamma,\gamma')\in (S\times T)\cap G^{(2)}\}$$ and the
inverse of $S$ is the image of $S$ by the inverse map. The inverse semigroup
relations, which are $(RS)T=R(ST)$, $(ST)^{-1}=T^{-1}S^{-1}$ and $SS^{-1}S=S$,
are indeed satisfied. 

Our second main example of inverse semigroup is the set
${\mathcal P}art(X)$ of all partial  homeomorphisms of a topological space
$X$ endowed with composition and inverse. By partial homeomorphism, we mean a
homeomorphism $\varphi: U\rightarrow V$, where $U,V$ are open subsets of $X$. More
generally, one defines a {\it pseudogroup} on a topological space $X$ as a
sub-inverse semigroup of ${\mathcal P}art(X)$.  We say that the pseudogroup $\mathcal G$ is {\it
ample} if every partial homeomorphism $\varphi$ which locally belongs to $\mathcal
G$ (i.e. every point in the domain of $\varphi$ has an open neighborhood $U$ such
that
$\varphi_{|U}=\beta_{|U}$ with $\beta\in
\mathcal G$) does belong to $\mathcal G$. Given a pseudogroup $\mathcal G$, we denote by
$[{\mathcal G}]$ the set of partial homeomorphisms which belong locally  to
$\mathcal G$; it is an ample pseudogroup called the ample pseudogroup of $\mathcal
G$. Given a pseudogroup $\mathcal G$ on the topological space $X$, its groupoid of
germs is
$$G=\{[x,\varphi,y],\quad \varphi\in {\mathcal G},  y\in dom(\varphi),
x=\varphi(y)\}.$$
where $[x,\varphi,y]=[x,\psi,y]$ iff $\varphi$ and $\psi$ have the same germ at $y$,
i.e. there exists a neighborhood $V$ of $y$ in $X$ such that
$\varphi_{|V}=\psi_{|V}$. Its groupoid structure is defined by the range and source
maps $r[x,\varphi,y]=x, s[x,\varphi,y]=y$, the product
$[x,\varphi,y][y,\psi,z]=[x,\varphi\psi,z]$ and the inverse
$[x,\varphi,y]^{-1}=[y,\varphi^{-1},x]$. Its topology is the topology of germs, defined
by the basic open sets 
$${\mathcal U}(U,\varphi,V)=\{[x,\varphi,y]\in G: x\in U, y\in V\}$$ 
where $U,V$ are open
subsets of $X$ and $\varphi\in {\mathcal G}$. Observe that the groupoid of germs $G$ of
the pseudogroup $\mathcal G$ on $X$ depends on the ample pseudogroup $[\mathcal G]$
only. 

Conversely, an \'etale groupoid $G$ defines a pseudogroup $\mathcal G$ on
$X=G^{(0)}$. Indeed, the map
$\alpha:S\mapsto
\alpha_S$ is an inverse semigroup homomorphism of the inverse semigroup of open
bisections 
${\mathcal S}$ into ${\mathcal P}art(X)$ which we call the canonical action
of ${\mathcal S}$ on $X$. The relevant pseudogroup is its range ${\mathcal
G}=\alpha({\mathcal S})$.

\begin{defn} Let us a say that an \'etale groupoid $G$ is
\begin{enumerate}
\item {\it principal} if $G'=G^{(0)}$
\item{\it essentially principal} if the interior of $G'$ is $G^{(0)}$. 
\end{enumerate}
\end{defn}

The following proposition gives a more common definition of an essentially principal groupoid. Let us warn the reader that this definition is not the same as Definition II.4.3 of \cite{ren:approach}, which requires that the property holds not only for $G$ but also for all reductions $G_{|F}$, where $F$ is a closed invariant subset of $G^{(0)}$.

\begin{prop} Let us assume that $G$ is a second countable Hausdorff \'etale
groupoid and that its unit space $G^{(0)}$ has the Baire property. Then the following
properties are equivalent:
\begin{enumerate}
\item $G$ is essentially principal.
\item  The set of points of
$G^{(0)}$ with trivial isotropy is dense.
\end{enumerate}
\end{prop}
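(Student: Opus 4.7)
The plan is to prove the two implications separately. The direction (ii) $\Rightarrow$ (i) is a direct argument via open bisections, while (i) $\Rightarrow$ (ii) is the nontrivial direction and requires the Baire property applied to a countable family of closed ``fixed-point'' sets indexed by a basis of open bisections.

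For (ii) $\Rightarrow$ (i), I would first note that in a Hausdorff \'etale groupoid $G^{(0)}$ is clopen: it is open by \'etaleness, and if $\gamma_i \to \gamma$ with $\gamma_i \in G^{(0)}$, then $\gamma_i = r(\gamma_i) \to r(\gamma)$, forcing $\gamma = r(\gamma) \in G^{(0)}$ by Hausdorffness. Since $G^{(0)} \subset G'$ and $G^{(0)}$ is open, the inclusion $G^{(0)} \subset \operatorname{int}(G')$ holds automatically. For the reverse inclusion, suppose some $\gamma \in \operatorname{int}(G') \setminus G^{(0)}$ exists. As open bisections form a basis, I can pick an open bisection $S$ with $\gamma \in S \subset \operatorname{int}(G') \setminus G^{(0)}$. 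Then for every $x \in s(S)$, the unique $\gamma_x \in S$ with $s(\gamma_x)=x$ satisfies $r(\gamma_x)=\alpha_S(x)=x$, i.e.\ $\gamma_x \in G_x^x$, and $\gamma_x \neq x$ since $S \cap G^{(0)} = \emptyset$. Thus every point of the nonempty open set $s(S) \subset G^{(0)}$ has non-trivial isotropy, contradicting (ii).

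For (i) $\Rightarrow$ (ii), second countability together with the \'etale structure give a countable basis $\{S_n\}$ of open bisections of $G$; let $\mathcal{B} = \{S_n : S_n \cap G^{(0)} = \emptyset\}$, and for $S \in \mathcal{B}$ put
\[
F_S = \{x \in s(S) : \alpha_S(x) = x\} = s(S \cap G').
\]
Using that $G^{(0)}$ is clopen, every non-unit element of $G'$ lies in some $S \in \mathcal{B}$, and conversely, any $x \in F_S$ produces a non-unit element $(s|_S)^{-1}(x)$ of $G_x^x$. Hence the set of points with non-trivial isotropy is exactly $\bigcup_{S \in \mathcal{B}} F_S$.

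The crux is then to show each $F_S$ is nowhere dense in $G^{(0)}$. Since $G$ is Hausdorff, $G'$ is closed, so $F_S$ is closed in $s(S)$ via the homeomorphism $s|_S \colon S \to s(S)$. Hypothesis (i) says $G' \setminus G^{(0)}$ has empty interior in $G$; since $S \cap G' \subset G' \setminus G^{(0)}$ and $S$ is open, $S \cap G'$ has empty interior in $S$, hence $F_S$ has empty interior in $s(S)$. A short argument, using that $s(S)$ is open in $G^{(0)}$ and $F_S$ is closed in $s(S)$ (so $\overline{F_S} \cap s(S) = F_S$), upgrades this to $F_S$ being nowhere dense in $G^{(0)}$. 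Applying the Baire property to the countable union $\bigcup_{S \in \mathcal{B}} F_S$ shows its complement is dense, giving (ii). This last upgrade from ``nowhere dense in $s(S)$'' to ``nowhere dense in $G^{(0)}$'' is the only delicate point; the rest is routine once the fixed-point sets $F_S$ and their indexing by a countable basis of open bisections are in place.
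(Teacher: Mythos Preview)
Your proof is correct and follows essentially the same strategy as the paper's: both directions hinge on a countable family of open bisections, the sets $s(S\cap G')$ (or $r(S\cap G')$ in the paper), and the Baire property. The only cosmetic difference is that you restrict upfront to bisections disjoint from $G^{(0)}$ and show the fixed-point sets $F_S$ are nowhere dense directly, whereas the paper works with an arbitrary countable cover by bisections, takes $Y_n=\mathrm{int}(A_n)\cup\mathrm{ext}(A_n)$ as its dense open sets, and then argues separately that $\bigcap_n Y_n$ consists of points with trivial isotropy.
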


\begin{proof} Let us introduce the set $Y$ of units  with trivial
isotropy and its complement $Z=G^{(0)}\setminus Y$. Condition $(ii)$ means that $Z$ has
an empty interior.
\smallskip 
Let us show that
$(ii)\Rightarrow (i)$. Let
$U$ be an open subset of $G$ contained in $G'$. Since $G$ is Hausdorff,  $ G^{(0)}$
is closed in $G$ and 
$U\setminus G^{(0)}$ is open.
Therefore
$r(U\setminus G^{(0)})$, which is open  and contained in
$Z$, is empty. This implies that $U\setminus G^{(0)}$ itself is empty and that $U$
is contained in
$G^{(0)}$.
\smallskip
Let us show that
$(i)\Rightarrow (ii)$. We choose a countable family $(S_n)$ of open bisections which
cover $G$. We introduce the subsets $A_n=r(S_n\cap G')$ of $G^{(0)}$. By definition,
for each $n$, $Y_n=int(A_n)\cup ext(A_n)$ is a dense open subset of
$G^{(0)}$. By the Baire property, the intersection $\cap_nY_n$ is dense in
$G^{(0)}$. Let us show that
$\cap_nY_n$ is contained in $Y$. Suppose that $x$ belongs to $\cap_nY_n$ and that
$\gamma$ belongs to $G(x)$. There exists $n$ such that $\gamma$ belongs to $S_n$. Then
$\gamma$ belongs to $S_n\cap G'$ and $x=r(\gamma)$ belongs to $A_n$. Since it also
belongs to
$Y_n$, it must belong to $int(A_n)$. Let $V$ be an open set containing $x$ and contained
in $A_n$. Since $r$ is a bijection from $S_n\cap G'$ onto $A_n$, the open set $VS_n$
is contained in $G'$. According to $(i)$, it is contained in $G^{(0)}$ and 
$\gamma=xS_n$ belongs to $G^{(0)}$.
\end{proof}
\vskip3mm

\begin{prop} Let $\mathcal G$ be a pseudogroup on $X$,
let $G$ be its groupoid of germs and let $\mathcal S$ be the inverse semigroup of open
bisections of $G$. Then
\begin{enumerate} 
\item The pseudogroup
$\alpha({\mathcal S})$ is the ample pseudogroup $[{\mathcal G}]$ of $\mathcal G$.
\item The canonical action $\alpha$ is an isomorphism from $\mathcal S$ onto
$[{\mathcal G}]$.
\end{enumerate}
\end{prop}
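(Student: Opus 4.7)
The plan is to establish (i) by verifying the two inclusions $\alpha(\mathcal{S}) \subseteq [\mathcal{G}]$ and $[\mathcal{G}] \subseteq \alpha(\mathcal{S})$, and then to deduce (ii) by proving that $\alpha$ is injective; surjectivity onto $[\mathcal{G}]$ is contained in (i), and the homomorphism property has already been recorded above.

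For the forward inclusion, I take an open bisection $S$ and a point $y \in s(S)$, and let $\gamma$ be the unique element of $S$ with $s(\gamma)=y$. Since the sets $\mathcal{U}(U,\varphi,V)$ form a basis of the topology of $G$, I choose a basic neighborhood $\mathcal{U}(U,\varphi,V)\subseteq S$ of $\gamma$ with $\varphi\in\mathcal{G}$. For every $y'$ in the open source set $V\cap\varphi^{-1}(U)$ of this neighborhood, the germ $[\varphi(y'),\varphi,y']$ lies in $S$ with source $y'$; by injectivity of $s|_S$ it must be \emph{the} element of $S$ with source $y'$, and hence $\alpha_S$ agrees with $\varphi$ on an open neighborhood of $y$. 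Thus $\alpha_S\in[\mathcal{G}]$.

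For the reverse inclusion I take $\psi\in[\mathcal{G}]$ with domain $U$ and, for each $y\in U$, choose an open set $V_y\subseteq U$ containing $y$ and some $\varphi_y\in\mathcal{G}$ with $\psi|_{V_y}=\varphi_y|_{V_y}$. I set
$$ S_\psi = \bigcup_{y\in U}\mathcal{U}(\psi(V_y),\varphi_y,V_y), $$
which is open by construction. Using $\psi|_{V_y}=\varphi_y|_{V_y}$ one checks that each basic piece equals $\{[\psi(z),\psi,z]:z\in V_y\}$, since all the local choices $\varphi_y$ represent the same germ as $\psi$ at every point of $V_y$. Consequently $S_\psi=\{[\psi(z),\psi,z]:z\in U\}$; because $\psi$ is a homeomorphism, both $s$ and $r$ are injective on $S_\psi$, so $S_\psi\in\mathcal{S}$, and $\alpha_{S_\psi}=\psi$ by construction.

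For injectivity in (ii), suppose $\alpha_S=\alpha_T=\psi$ for open bisections $S,T$. Then $s(S)=s(T)=\mathrm{dom}(\psi)$. Given $\gamma\in S$ with source $y$, let $\gamma'$ be the unique element of $T$ with source $y$. Basic open neighborhoods $\mathcal{U}(U,\varphi,V)\subseteq S$ of $\gamma$ and $\mathcal{U}(U',\varphi',V')\subseteq T$ of $\gamma'$ provide representatives $\varphi,\varphi'\in\mathcal{G}$ that both agree with $\psi$ on a neighborhood of $y$ (by the argument above), so they have the same germ at $y$. Hence $\gamma=[\psi(y),\varphi,y]=[\psi(y),\varphi',y]=\gamma'$, and by symmetry $S=T$. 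The main obstacle is the coherence check in the reverse inclusion of (i): one must verify that the locally chosen functions $\varphi_y$, which are distinct functions on overlaps, nevertheless assemble into a single open bisection rather than a larger open set with multiple germs over the same base point; this is precisely what the identity $\varphi_y|_{V_y}=\psi|_{V_y}$ guarantees.
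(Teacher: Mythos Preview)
Your proof is correct and follows essentially the same route as the paper. The only organizational difference is that the paper packages the argument as exhibiting an explicit two-sided inverse: since $\mathcal G$ and $[\mathcal G]$ have the same groupoid of germs, each $\varphi\in[\mathcal G]$ directly yields the open bisection $S_\varphi=\mathcal U(X,\varphi,X)$, and one checks that $\varphi\mapsto S_\varphi$ and $S\mapsto\alpha_S$ are mutual inverses, giving (i) and (ii) simultaneously. Your $S_\psi$ is exactly this $S_\varphi$ (as you note, it equals $\{[\psi(z),\psi,z]:z\in U\}$), and your separate injectivity argument is subsumed in the paper by the identity $S_{\alpha_S}=S$.
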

\begin{proof} We have observed above that $\mathcal G$ and $[\mathcal G]$ define the
same groupoid of germs $G$. Thus, every $\varphi\in [\mathcal G]$ defines the open
bisection $S=S_\varphi={\mathcal U}(X,\varphi,X)$. By construction, $\alpha_S=\varphi$.
Conversely, let
$S$ be an open bisection of $G$. It can be written
as a union $S=\cup_i{\mathcal U}(V_i,\varphi_i,U_i)$, where $U_i,V_i$ are open subsets
of $X$ and $\varphi_i\in \mathcal G$. This shows that $\varphi=\alpha_S$ belongs to
$[\mathcal G]$ and that $S_\varphi=S$. In other words, the
maps $S\to\alpha_S$ and $\varphi\to S_\varphi$ are inverse of each other. 
\end{proof}

\begin{prop} Let $G$ be an \'etale groupoid over $X$ and let ${\mathcal S}$ be
the inverse semigroup of its open bisections. Let $\alpha$ be the canonical action of
$\mathcal S$ on $X$. The following properties are
equivalent:
\begin{enumerate}
\item The map $\alpha$ is one-to-one.
\item $G$ is essentially principal.
\end{enumerate}
\end{prop}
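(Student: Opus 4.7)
The plan is to establish each implication by a direct computation with open bisections, exploiting the fact that in an \'etale groupoid both $G^{(0)}$ and the images of bisections under $r$ and $s$ are open.

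For $(ii)\Rightarrow (i)$, I would suppose $\alpha_S=\alpha_T$ for two open bisections $S,T\in\mathcal{S}$. Since the domain of $\alpha_S$ is $s(S)$, this equality forces $s(S)=s(T)$. The key object is the open bisection $ST^{-1}\subset G$. I would check that every element of $ST^{-1}$ has the form $\gamma(\gamma')^{-1}$ with $\gamma\in S$, $\gamma'\in T$, $s(\gamma)=s(\gamma')=:x$, and $r(\gamma)=\alpha_S(x)=\alpha_T(x)=r(\gamma')$, hence $\gamma(\gamma')^{-1}\in G'$. Thus $ST^{-1}$ is an open subset of $G'$, which by essential principality must lie in $G^{(0)}$. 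Consequently, for every $x$ in the common source, $\gamma(\gamma')^{-1}$ is a unit with source $r(\gamma')$ and range $r(\gamma)$, forcing $\gamma=\gamma'$. Because $S$ and $T$ are bisections, this identifies them fiberwise over $s(S)=s(T)$ and gives $S=T$.

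For $(i)\Rightarrow (ii)$, I would argue by contrapositive: assume $G$ is not essentially principal, so that the interior of $G'$ contains a point $\gamma\notin G^{(0)}$. Using that $G$ is \'etale, pick an open bisection $V$ around $\gamma$ and set $W=V\cap\mathrm{int}(G')$; this is an open bisection contained in $G'$ and not contained in $G^{(0)}$. Because $W\subset G'$, the maps $r|_W$ and $s|_W$ agree and $s(W)=r(W)$ is an open subset of $G^{(0)}$, which is itself an open bisection (a sub-bisection of the unit space). By construction $\alpha_W$ is the identity on $s(W)$, as is $\alpha_{s(W)}$. Yet $W\neq s(W)$ since $\gamma\in W\setminus G^{(0)}$, so $\alpha$ fails to be injective.

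The computations are routine; the only point that takes a moment's care is the product $ST^{-1}$ in the first direction, namely verifying that it is an open bisection entirely contained in $G'$ once $\alpha_S=\alpha_T$, which is what allows essential principality to collapse it into $G^{(0)}$. The reverse direction is essentially the observation that any piece of $G'$ sticking out of $G^{(0)}$ gives a non-unit bisection that acts trivially on $X$, witnessing the failure of injectivity of $\alpha$.
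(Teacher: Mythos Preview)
Your proof is correct and follows essentially the same route as the paper's. The paper is simply more terse: it first notes that $\alpha_S$ is an identity map iff $S\subset G'$, then for $(ii)\Rightarrow(i)$ uses $ST^{-1}\subset G^{(0)}$ exactly as you do, and for $(i)\Rightarrow(ii)$ takes an open bisection $S\subset G'$ containing $\gamma$ and concludes $S\subset G^{(0)}$ by injectivity (implicitly comparing $S$ with $s(S)$, which is your explicit witness).
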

\begin{proof} Let $S$ be an open bisection of $G$. Clearly, $\alpha_S$ is an identity
map iff $S$ is contained in
$G'$. If $G$ is essentially principal, this implies that $S$ is contained in $G^{(0)}$.
Therefore, if $S$ and $T$ have the same image in ${\mathcal P}art(X)$, then $ST^{-1}\subset
G^{(0)}$ and $S=T$. Conversely, if $\gamma$ belongs to the interior of $G'$, it belongs
to some open bisection $S$ contained in $G'$.
Then
$\alpha_S$ is an identity map and by $(i)$, $S$ is contained in $G^{(0)}$. Therefore
$\gamma$ belongs to
$G^{(0)}$.
\end{proof}

A groupoid which satisfies condition $(i)$ of the above proposition is called {\it effective}. The reader will find a good discussion of this notion in the monograph \cite[pp 136-7]{mm:intro} by I.~Moerdijk and J.~Mr\~cun . The following characterization of groupoids of germs is well-known.

\begin{cor}\label{essentially principal}  Let  $G$ be an \'etale groupoid. Let $\mathcal
S$ be the inverse semigroup of its open bisections. The following properties are
equivalent:
\begin{enumerate}
\item $G$ is isomorphic to  the groupoid of germs of some pseudogroup.
\item $G$ is isomorphic to  the groupoid of germs of $\alpha({\mathcal S})$.
\item $G$ is essentially principal.
\end{enumerate}
\end{cor}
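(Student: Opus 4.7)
The implication (ii)$\Rightarrow$(i) is immediate, since $\alpha(\mathcal S)$ is, by construction, a pseudogroup on $X=G^{(0)}$. For (i)$\Rightarrow$(iii), suppose $G$ is isomorphic to the groupoid of germs of a pseudogroup $\mathcal G$ on $X$. The previous proposition identifying $\mathcal S(G)$ with $[\mathcal G]$ via the canonical action tells us that $\alpha$ is injective, and then the proposition characterizing injectivity of $\alpha$ yields that $G$ is essentially principal.

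The substantive content lies in (iii)$\Rightarrow$(ii). Let $H$ denote the groupoid of germs of the pseudogroup $\alpha(\mathcal S)$ on $X$. The plan is to write down a canonical map $\Phi\colon G\to H$ and to show that essential principality makes it an isomorphism of topological groupoids. For each $\gamma\in G$, choose an open bisection $S\in\mathcal S$ containing $\gamma$, and set
$$\Phi(\gamma)=[r(\gamma),\alpha_S,s(\gamma)].$$
First I would check that $\Phi$ is well defined: if $S,T\in\mathcal S$ both contain $\gamma$, then $S\cap T$ is an open bisection containing $\gamma$ and $\alpha_S$, $\alpha_T$ both agree with $\alpha_{S\cap T}$ on the open neighbourhood $s(S\cap T)$ of $s(\gamma)$, so they define the same germ. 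It is then routine that $\Phi$ is a groupoid homomorphism, and surjectivity is clear: any germ $[x,\alpha_S,y]\in H$ is the image of the unique $\gamma\in S$ with $s(\gamma)=y$.

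The main obstacle is injectivity, and this is where essential principality enters. Suppose $\Phi(\gamma_1)=\Phi(\gamma_2)$, with $\gamma_i$ lying in an open bisection $S_i$. Shrinking, I may assume $s(S_1)=s(S_2)=V$ with $\alpha_{S_1}=\alpha_{S_2}$ on $V$. Then $T:=S_2^{-1}S_1$ is an open bisection with $\alpha_T=\mathrm{id}_V$, hence $T\subset G'$. Since $G$ is essentially principal, the interior of $G'$ coincides with $G^{(0)}$, so $T\subset G^{(0)}$. Applied to the composable pair $(\gamma_2^{-1},\gamma_1)$, this forces $\gamma_2^{-1}\gamma_1\in G^{(0)}$, i.e.\ $\gamma_1=\gamma_2$. (Equivalently, one can invoke the preceding proposition, which already says that essential principality of $G$ is exactly the injectivity of $\alpha$, and deduce injectivity of $\Phi$ from this directly.)

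Finally, for the topology: the basic open sets of $H$ are of the form $\mathcal U(U,\alpha_S,V)$ with $S\in\mathcal S$ and $U,V$ open in $X$, and one checks that
$$\Phi^{-1}\bigl(\mathcal U(U,\alpha_S,V)\bigr)=S\cap r^{-1}(U)\cap s^{-1}(V),$$
which is open in $G$; conversely, $\Phi$ carries an open bisection $S$ onto $\mathcal U(r(S),\alpha_S,s(S))$. Hence $\Phi$ is a homeomorphism and therefore an isomorphism of étale groupoids, proving (iii)$\Rightarrow$(ii).
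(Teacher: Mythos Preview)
Your proof is correct and follows essentially the same strategy as the paper. The only cosmetic difference is the direction of the comparison map: the paper defines $H\to G$ by $[x,\alpha_S,y]\mapsto Sy$ (so that essential principality is used, somewhat implicitly, for well-definedness, while injectivity is immediate from $S_1\cap S_2$), whereas you define the inverse map $G\to H$ (so well-definedness is immediate and essential principality enters in the injectivity step via $S_2^{-1}S_1\subset G'$). Your treatment of the topology is also more explicit than the paper's.
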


\begin{proof}
Assume that
$G$ is the groupoid of germs of a pseudogroup
$\mathcal G$ on $X$. We have seen that $\alpha({\mathcal S})$ is the ample pseudogroup
$[{\mathcal G}]$ of $\mathcal G$ and that the map
$S\in {\mathcal S}\mapsto \alpha_S\in{\mathcal G}_{\mathcal S}$ is one-to-one.
Conversely, assume that the \'etale groupoid $G$ on $X$ is essentially principal.
The
canonical action $\alpha$ 
identifies
$\mathcal S$ with the pseudogroup ${\mathcal G}=\alpha(\mathcal S)$.
Then the map $[x,S,y]\to Sy$ from the groupoid of germs $H$ of ${\mathcal G}$ to $G$
is a topological groupoid isomorphism. The surjectivity is clear, since $G$ can be
covered by open bisections. Suppose that $[x_i,S_i,y_i], i=1,2$ give the same element
$\gamma=S_1y_1=S_2y_2$. Then $y_1=y_2=y$ and the open bisections $S_1,S_2$ and $S_1\cap
S_2$ define the same germ at $y$. Hence $[x_1,S_1,y_1]=[x_2,S_2,y_2]$.
\end{proof}

\vskip 3mm
\section{The analysis of the twisted groupoid C$^*$-algebra.}

Following \cite{kum:diagonals}, one defines a twisted groupoid as a central groupoid 
extension
$${\bf T}\times G^{(0)}\rightarrow\Sigma\rightarrow G,$$
where ${\bf T}$ is the circle group. Thus, $\Sigma$ is a groupoid containing
${\bf T}\times G^{(0)}$ as a subgroupoid. One says that $\Sigma$ is a twist over
$G$.  We assume that $\Sigma$ and $G$ are topological groupoids. In particular,
$\Sigma$ is a principal ${\bf T}$-space and $\Sigma/{\bf T}=G$. We form the
associated complex line bundle $L=({\bf C}\times\Sigma)/{\bf T}$, where $\bf T$ acts
by the diagonal action
$z(\lambda,\sigma)=(\lambda\overline z,z\sigma)$. The class of $(\lambda,\sigma)$ is
written $(\lambda,\sigma)$. The line bundle
$L$ is a Fell bundle over the groupoid
$G$, as defined in \cite{kum:fell} (see also \cite{fel:bundles}: it has the product $L_{\dot\sigma}\otimes
L_{\dot\tau}\rightarrow L_{\dot{\sigma\tau}}$, sending
$([\lambda,\sigma],[\mu,\tau])$ into
$[\lambda\mu,\sigma\tau]$  and the involution
$L_{\dot\sigma}\rightarrow L_{\dot{\sigma}^{-1}}$ sending $[\lambda,\sigma]$ into
$[\overline\lambda,\sigma^{-1}]$. An element $u$ of a Fell bundle $L$ is
called unitary if $u^*u$ and $uu^*$ are unit elements. The set of unitary elements
of $L$ can be identified to
$\Sigma$ through the map $\sigma\in\Sigma\mapsto [1,\sigma]\in L$. In fact, this
gives a one-to-one correspondence between twists over $G$ and Fell line bundles over
$G$. (see \cite{dkr:fell}). It is convenient to view the sections of 
$L$ as complex-valued functions
$f:\Sigma\rightarrow {\bf C}$ satisfying $f(z\sigma)=f(\sigma)\overline z$ for all
$z\in{\bf T},\sigma\in\Sigma$ and we shall usually do so. When there is no risk of
confusion, we shall use the same symbol for the function $f$ and the section of $L$
it defines.

In order to define the twisted convolution algebra, we assume from now on that 
$G$ is locally compact, Hausdorff, second countable and that it possesses a Haar
system
$\lambda$. It is a family of
measures
$\{\lambda_x\}$ on $G$, indexed by $x\in G^{(0)}$, such that $\lambda_x$ has
exactly $G^x$ as its support, which is continuous, in the sense that for every
$f\in C_c(G)$, the function $\lambda(f):x\mapsto\lambda_x(f)$ is
continuous, and invariant, in the sense that for every $\gamma\in G$,
$R(\gamma)\lambda_{r(\gamma)}=\lambda_{s(\gamma)}$, where
$R(\gamma)\gamma'=\gamma'\gamma$. When $G$ is an \'etale groupoid, it has a
canonical Haar system, namely the counting measures on the fibers of $s$.

\medskip
Let $(G,\lambda)$ be a Hausdorff locally compact second countable groupoid with Haar
system and let
$\Sigma$ be a twist over $G$. We denote by $C_c(G,\Sigma)$ the space of continuous
sections with compact support of the line bundle associated with $\Sigma$. The
following operations
$$f*g(\sigma)=\int
f(\sigma\tau^{-1})g(\tau)d\lambda_{s(\sigma)}(\dot\tau)\quad\hbox{and}\quad
f^*(\sigma)=\overline{f(\sigma^{-1})}$$
turn $C_c(G,\Sigma)$ into a $*$-algebra.
Furthermore, we define for $x\in G^{(0)}$ the Hilbert space
$H_x=L^2(G_x,L_x,\lambda_x)$ of square-integrable sections
of the line bundle $L_x=L_{|G_x}$.
Then, for $f\in C_c(G,\Sigma)$, the operator $\pi_x(f)$ on $H_x$ defined by
$$\pi_x(f)\xi(\sigma)=\int f(\sigma\tau^{-1})\xi(\tau)d\lambda_x(\dot\tau)$$
is bounded. This can be deduced from the useful estimate:
$$\|\pi_x(f)\|\le\|f\|_I=\max{(\sup_y\int |f|d\lambda_y,\sup_y\int
|f^*|d\lambda_y)}.$$ Moreover, the field
$x\mapsto \pi_x(f)$ is continuous when the
family of Hilbert spaces
$H_x$ is given the structure of a continuous field of Hilbert spaces by choosing
$C_c(G,\Sigma)$ as a fundamental family of continuous sections. Equivalently, the
space of sections $C_0(G^{(0)},H)$ is a right C$^*$-module over $C_0(G^{(0)})$ and
$\pi$ is a representation of $C_c(G,\Sigma)$ on this C$^*$-module.
\smallskip
The reduced C$^*$-algebra
$C^*_{red}(G,\Sigma)$ is the completion of $C_c(G,\Sigma)$ with respect to the norm
$\|f\|=\sup_x\|\pi_x(f)\|$.
\bigskip

Let us now study the properties of the pair $(A=C^*_{red}(G,\Sigma),
B=C_0(G^{(0)}))$ that we have constructed from a twisted \'etale Hausdorff locally
compact second countable groupoid $(G,\Sigma)$.


The main technical tool is that the elements of the reduced C$^*$-algebra
$C^*_{red}(G,\Sigma)$ are still functions on $\Sigma$ (or sections of the line bundle
$L$).

\begin{prop}(\cite[2.4.1]{ren:approach}) Let $G$ be an \'etale Hausdorff locally
compact second countable groupoid and let $\Sigma$ be a twist over $G$. Then, for all
$f\in C_c(G,\Sigma)$ we have :
\begin{enumerate}
\item $|f(\sigma)|\le \|f\|$ for every $\sigma\in\Sigma$ and
\item $\int |f|^2 d\lambda_x\le \|f\|^2$ for every $x\in G^{(0)}$.
\end{enumerate}
\end{prop}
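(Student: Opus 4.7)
The plan is to use the standard test-vector trick for étale groupoids: since the Haar system consists of counting measures on the fibres, the characteristic function of a unit already defines a vector of norm one in the regular representation space $H_x$, and $\pi_x(f)$ applied to that vector turns out to be nothing but the restriction of $f$ to the fibre above $x$. Both estimates then fall out essentially for free.

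More precisely, fix $x\in G^{(0)}$ and let $\xi_x\in H_x$ be the anti-equivariant section of $L$ over $\Sigma_x$ determined by $\xi_x(zx)=\bar z$ for $z\in{\bf T}$ and $\xi_x\equiv 0$ off the ${\bf T}$-orbit of the unit $x$. Because $\lambda_x$ is the counting measure on the fibre, $\|\xi_x\|=1$. I would then substitute $\xi_x$ in the defining formula
$$\pi_x(f)\xi_x(\sigma)=\int f(\sigma\tau^{-1})\xi_x(\tau)\,d\lambda_x(\dot\tau),$$
and check that the integrand is supported only at the point $\dot\tau=x$ of $G_x$; using centrality of ${\bf T}$, the identity $\sigma\cdot x=\sigma$, and the anti-equivariance of $f$ and $\xi_x$, the value of the integrand there is exactly $f(\sigma)$. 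Hence $\pi_x(f)\xi_x$ is the restriction $f|_{\Sigma_x}$.

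Assertion (ii) is then immediate: passing to Hilbert-space norms,
$$\int|f|^2\,d\lambda_x=\|\pi_x(f)\xi_x\|^2\le\|\pi_x(f)\|^2\|\xi_x\|^2\le\|f\|^2,$$
where $|f|^2$ is a genuine ${\bf T}$-invariant function on $G_x$, so the integral is unambiguous. For (i), given $\sigma\in\Sigma$ take $x=s(\sigma)$; then $|f(\sigma)|^2$ is a single summand of the counting-measure integral $\int|f|^2\,d\lambda_x$, so $|f(\sigma)|^2\le\int|f|^2\,d\lambda_x\le\|f\|^2$ by the case already proved. Thus (ii) implies (i) directly, and doing them in this order avoids any separate Cauchy--Schwarz argument.

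The only point requiring genuine care is the twist bookkeeping: one must verify that $\xi_x$ is well defined as an anti-equivariant section, that $|\xi_x|^2$ and $|f|^2$ really do descend to $G$ so the integrals make sense, and that the integrand $\dot\tau\mapsto f(\sigma\tau^{-1})\xi_x(\tau)$ is ${\bf T}$-invariant in $\tau$. These are routine once the Fell-line-bundle conventions are fixed, and no genuine obstacle arises; the proof is the obvious Fell-bundle version of the familiar fact that for an étale groupoid $C^*_{red}(G)$ embeds continuously into $C_0(G)$.
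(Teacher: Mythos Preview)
Your proof is correct and rests on the same key identity as the paper: $\pi_x(f)\epsilon_x = f|_{\Sigma_x}$ (your $\xi_x$ is the paper's $\epsilon_x$), from which (ii) follows exactly as you say. The only difference is in how (i) is obtained. The paper records a second formula, $f(\sigma)=\langle\epsilon_\sigma,\pi_{s(\sigma)}(f)\epsilon_{s(\sigma)}\rangle$, using a separate unit vector $\epsilon_\sigma\in H_{s(\sigma)}$ for each $\sigma\in\Sigma$, and reads off (i) by Cauchy--Schwarz; you instead deduce (i) from (ii) by noting that $|f(\sigma)|^2$ is a single summand in the counting-measure integral $\int|f|^2\,d\lambda_{s(\sigma)}$. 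Your route is marginally more economical (one test vector rather than a family), while the paper's explicit matrix-coefficient formula is a slightly sharper statement that is reused afterward in identifying elements of $C^*_{red}(G,\Sigma)$ with continuous sections of $L$.
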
 
\smallskip
{\it Proof.} This is easily deduced (see \cite[2.4.1]{ren:approach}) from the
following equalities:
$$f(\sigma)=<\epsilon_\sigma,\pi_{s(\sigma)}(f)\epsilon_{s(\sigma)}>,\quad
f_{|\Sigma_x}=\pi_x(f)\epsilon_x,$$
where
$f\in C_c(G,\Sigma)$, $\sigma\in\Sigma$, $x\in
G^{(0)}$ and $\epsilon_\sigma\in H_{s(\sigma)}$ is defined by
$\epsilon_\sigma(\tau)=\overline z$ if $\tau=z\sigma$ and $0$ otherwise.

As a consequence (\cite[2.4.2]{ren:approach}) the elements of
$C^*_{red}(G,\Sigma)$ can be viewed as continuous sections of the line bundle $L$.
Moreover, the above expressions of
$f*g(\sigma)$ and
$f^*(\sigma)$ are still valid for $f,g\in C^*_{red}(G,\Sigma)$ (the sum defining
$f*g(\sigma)$ is convergent). 
It will be convenient to define the open support of a
continuous section $f$ of the line bundle $L$ as 
$$supp'(f)=\{\gamma\in G: f(\gamma)\not=0\}.$$
Note that the unit space $G^{(0)}$ of $G$
is an open (and closed) subset of $G$ and that the restrictions of the twist $\Sigma$
and of the line bundle
$L$ to
$G^{(0)}$ are trivial. We have the following identification:
$$C_0(G^{(0)})=\{f\in C^*_{red}(G,\Sigma): supp'(f)\subset G^{(0)}\}$$
where $h\in C_0(G^{(0)})$ defines the section $f$ defined by $f(\sigma)=h(x)\overline z$
if $\sigma=(x,z)$ belongs to $G^{(0)}\times{\bf T}$ and $f(\sigma)=0$ otherwise.
Then $B=C_0(G^{(0)})$ is an abelian sub-C$^*$-algebra of $A=C^*_{red}(G,\Sigma)$ which
contains an approximate unit of $A$.

Here is an important application of the fact that the elements of
$C^*_{red}(G,\Sigma)$ can be viewed as continuous sections.

\begin{prop}(\cite[2.4.7]{ren:approach}) Let $(G,\Sigma)$ be a twisted \'etale
Hausdorff locally compact second countable groupoid.
Let
$A=C^*_{red}(G,\Sigma)$ and $B=C_0(G^{(0)})$. Then 
\begin{enumerate}
\item an element $a\in A$ commutes with every element of $B$
iff its open support $supp'(a)$ is contained in $G'$;
\item $B$ is a {\it masa} iff $G$ is essentially principal.
\end{enumerate}
\end{prop}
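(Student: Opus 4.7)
My plan is to establish (i) by a direct pointwise computation on continuous sections, then deduce (ii) as a corollary using the fact that $\mathrm{supp}'(a)$ is open.

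For (i), since $G$ is \'etale the Haar system consists of counting measures, so for $h\in B=C_0(G^{(0)})$ (viewed, via the canonical trivialization of the twist over $G^{(0)}$, as a section vanishing off $\Sigma|_{G^{(0)}}$) and $a\in C_c(G,\Sigma)$ the convolution sums collapse to single terms, yielding
\[
(h*a)(\sigma)=h(r(\sigma))\,a(\sigma),\qquad (a*h)(\sigma)=a(\sigma)\,h(s(\sigma)).
\]
Hence $a*h=h*a$ for every $h\in B$ if and only if $a(\sigma)\,(h(r(\sigma))-h(s(\sigma)))=0$ for all $\sigma$ and all $h$. By Urysohn's lemma on the locally compact Hausdorff space $G^{(0)}$, this is equivalent to $a(\sigma)=0$ whenever $r(\sigma)\neq s(\sigma)$, i.e.\ $\mathrm{supp}'(a)\subset G'$. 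To extend from $a\in C_c(G,\Sigma)$ to arbitrary $a\in A=C^*_{red}(G,\Sigma)$, I would invoke the previous proposition: its bound $|a(\sigma)|\le \|a\|$ makes pointwise evaluation on $\Sigma$ norm-continuous, and the convolution expressions remain valid on $A$, so the characterization descends to all of $A$ by density of $C_c(G,\Sigma)$.

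For (ii), $B$ is a \emph{masa} exactly when every $a\in A$ commuting with $B$ lies in $B$; by (i) this amounts to asking that every $a\in A$ with $\mathrm{supp}'(a)\subset G'$ actually satisfies $\mathrm{supp}'(a)\subset G^{(0)}$. Because $a$ is a continuous section of $L$, the set $\mathrm{supp}'(a)$ is open, so $\mathrm{supp}'(a)\subset G'$ automatically strengthens to $\mathrm{supp}'(a)\subset\mathrm{int}(G')$. If $G$ is essentially principal, then $\mathrm{int}(G')=G^{(0)}$ and the identification of $B$ with sections supported on $G^{(0)}$ places $a$ in $B$, so $B$ is a \emph{masa}. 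Conversely, if $\mathrm{int}(G')\neq G^{(0)}$, I would pick $\gamma_0\in\mathrm{int}(G')\setminus G^{(0)}$, use Hausdorffness together with the closedness of $G^{(0)}$ to find an open bisection $S\subset\mathrm{int}(G')$ that contains $\gamma_0$ and is disjoint from $G^{(0)}$, trivialize the Fell line bundle $L$ over a small bisection inside $S$, and exhibit a nonzero $\tilde f\in C_c(G,\Sigma)$ supported there. By (i), $\tilde f$ commutes with every element of $B$, but $\tilde f\notin B$, so $B$ fails to be a \emph{masa}.

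The main obstacle I anticipate is the extension step from $C_c(G,\Sigma)$ to the completion $A$: the entire argument rests on being able to treat elements of $A$ as genuine continuous sections of $L$ with well-defined pointwise values, which is exactly what the preceding proposition supplies. Once that pointwise viewpoint is available, both (i) and (ii) become topological bookkeeping on $\Sigma$ and $G$, and the remaining technical step — local triviality of $L$ over a small open bisection — is standard for Fell line bundles arising from a twist.
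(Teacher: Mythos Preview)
Your proposal is correct and follows the same approach as the paper, which simply notes that once elements of $C^*_{red}(G,\Sigma)$ are realized as continuous sections of $L$, the commutation condition $hf=fh$ can be read off pointwise, referring to \cite[2.4.7]{ren:approach} for the details. Your write-up in fact supplies precisely those details---the formulas $(h*a)(\sigma)=h(r(\sigma))a(\sigma)$ and $(a*h)(\sigma)=a(\sigma)h(s(\sigma))$, the Urysohn separation, and the construction of a nonzero section supported off $G^{(0)}$ when $\mathrm{int}(G')\supsetneq G^{(0)}$---so there is nothing to add.
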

\begin{proof} Since the elements of $C^*_{red}(G,\Sigma)$ are continuous sections of
the associated line bundle $L$, it is straightforward to spell out the condition
$hf=fh$ for all $h\in B$. We refer to \cite[2.4.7]{ren:approach} for details.
\end{proof}


Another piece of structure of the pair $(A=C^*_{red}(G,\Sigma),
B=C_0(G^{(0)}))$ is the restriction map $P:f\mapsto
f_{|G^{(0)}}$ from $A$ to $B$. 

\begin{prop}\label{conditional expectation}(cf.\cite[2.4.8]{ren:approach})  Let $(G,\Sigma)$ be a twisted \'etale
Hausdorff locally compact second countable groupoid. Let
$P:C^*_{red}(G,\Sigma)\rightarrow C_0(G^{(0)})$ be the restriction map. Then
\begin{enumerate}
\item $P$ is a conditional expectation onto $C_0(G^{(0)})$.
\item $P$ is faithful.
\item If $G$ is essentially principal, $P$ is the unique conditional expectation
onto $C_0(G^{(0)})$.
\end{enumerate}
\end{prop}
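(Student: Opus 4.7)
My plan is to dispatch parts (i) and (ii) by direct computation, then attack (iii) by a partition-of-unity reduction followed by a normalizer-type calculation exploiting essential principality.

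For (i), the preceding proposition yields $|f(\sigma)|\le\|f\|$, so $P\colon f\mapsto f|_{G^{(0)}}$ is a contraction on $A$; approximating by elements of $C_c(G,\Sigma)$ (whose restrictions lie in $C_c(G^{(0)})$) gives $P(A)\subset C_0(G^{(0)})=B$. To check the bimodule property on $C_c(G,\Sigma)$, evaluate the twisted convolution at $x\in G^{(0)}$: the only element of $G^x\cap G^{(0)}$ is $x$ itself, so the sums collapse and yield $(b_1 a b_2)(x)=b_1(x)a(x)b_2(x)$. Together with $P|_B=\mathrm{id}_B$ this makes $P$ a conditional expectation. For (ii), the same collapse gives $P(a^*a)(x)=(a^*a)(x)=\sum_{\gamma\in G^x}|a(\gamma)|^2$ for every $a\in A$ (convergence from $\int|f|^2\,d\lambda_x\le\|f\|^2$); if this vanishes for all $x$, then $a\equiv 0$ on $G$, hence $a=0$ in $A$.

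For (iii), let $Q\colon A\to B$ be a second conditional expectation. By norm continuity of $P$ and $Q$ it suffices to prove $Q(f)=P(f)$ for $f\in C_c(G,\Sigma)$. Since $G$ is \'etale Hausdorff, $G^{(0)}$ is clopen in $G$, and we may split $f=g+h$ with $g:=f\cdot\chi_{G^{(0)}}\in B$ and $h:=f-g\in C_c(G\setminus G^{(0)},\Sigma)$; as $P(h)=0$ and $Q(g)=g=P(g)$, the problem reduces to showing $Q(h)=0$. Covering the compact support of $h$ by finitely many open bisections disjoint from $G^{(0)}$ and using a subordinate partition of unity, we reduce further to the case that $h$ is supported in a compact subset $K$ of a single open bisection $S$ with $S\cap G^{(0)}=\emptyset$.

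The central computation is the identity $bh=hc$ in $A$, valid for every $b\in B$ and every $c\in C_c(s(S))\subset B$ that agrees with $b\circ\alpha_S$ on $s(K)$---take $c=(b\circ\alpha_S)\psi$ for a bump $\psi\in C_c(s(S))$ equal to $1$ on $s(K)$. Both sides vanish off $K$, and on $K$ the formulas $(bh)(\sigma)=b(r(\sigma))h(\sigma)$ and $(hc)(\sigma)=c(s(\sigma))h(\sigma)$ coincide since $r=\alpha_S\circ s$ on $S$. Applying $Q$ gives $bQ(h)=Q(h)c$ in $B$. Varying $b$ together with the freedom in $c$ off $s(K)$ and using Urysohn on the locally compact Hausdorff space $G^{(0)}$, one first sees that $Q(h)$ vanishes off $s(K)$ (two extensions of $c$ differing at $x\notin s(K)$ force $Q(h)(x)=0$), and then that on $s(K)$ the condition $Q(h)(x)\ne 0$ forces $b(x)=b(\alpha_S(x))$ for all $b\in B$, hence $\alpha_S(x)=x$; thus $\mathrm{supp}(Q(h))\subset s(K\cap G')$. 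Essential principality now finishes the proof: since $\mathrm{int}(G')=G^{(0)}$ and $S\cap G^{(0)}=\emptyset$, the closed set $K\cap G'$ has empty interior in $S$, so its image under the homeomorphism $s|_S$ is nowhere dense in $s(S)$ and hence in $G^{(0)}$; a continuous function with support of empty interior must vanish. The main obstacle is precisely this bookkeeping in (iii)---producing $c\in B$, handling the various positions of $x$ relative to $s(S)$, $r(S)$ and their boundaries, and translating essential principality into the requisite nowhere-density that kills $Q(h)$.
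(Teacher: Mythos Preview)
Your proof is correct and follows essentially the same strategy as the paper: reduce by partition of unity to $h$ supported in an open bisection $S$ disjoint from $G^{(0)}$, exploit the intertwining relation $bh=h(b\circ\alpha_S)$ together with the $B$-bimodule property of $Q$, and use essential principality to force $Q(h)=0$. The only cosmetic differences are that the paper shows $Q(f)(x)=0$ for $x\notin s(K)$ more directly (choosing $b\in B$ with $b(x)=1$ and $b|_{s(K)}=0$ so that $fb=0$) and concludes by contradiction at a single point $x_1$ with $\alpha_S(x_1)\neq x_1$, whereas you package the same idea as a nowhere-dense-support argument.
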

\begin{proof} This is proved in \cite[2.4.8]{ren:approach} in the principal case.
The main point of
$(i)$ is that $P$ is well defined, which is clear from the above. There is no
difficulty checking that it has all the properties of an expectation map. Note that
for $h\in C_0(G^{(0)})$ and $f\in C^*_{red}(G,\Sigma)$, we have
$(hf)(\sigma)=h(r(\sigma))f(\sigma)$ and
$(fh)(\sigma)=f(\sigma)h(s(\sigma))$. The assertion $(ii)$ is also clear: for $f\in
C^*_{red}(G,\Sigma)$ and $x\in G^{(0)}$, we have $$P(f^* * f)(x)=\int |f(\tau)|^2
d\lambda_x(\dot\tau).$$ Hence, if $P(f^* * f)=0$, $f(\tau)=0$ for all
$\tau\in\Sigma$. Let us prove $(iii)$. Let
$Q:C^*_{red}(G,\Sigma)\rightarrow C_0(G^{(0)})$ be a conditional expectation. We shall
show that $Q$ and $P$ agree on $C_c(G,\Sigma)$, which suffices to prove the assertion.
Let $f\in C_c(G,\Sigma)$ with compact support $K$ in $G$. We first consider the case
when $K$ is contained in an open bisection $S$ which does not meet $G^{(0)}$ and show
that $Q(f)=0$. If $x\in G^{(0)}$ does not belong to $s(K)$, then $Q(f)(x)=0$. Indeed,
we choose $h\in C_c(G^{(0)})$ such that $h(x)=1$ and its support does not meet $s(K)$.
Then $fh=0$, therefore $Q(f)(x)=Q(f)(x)h(x)=(Q(f)h)(x)=Q(fh)(x)=0$. Let $x_0\in
G^{(0)}$ be such that $Q(f)(x_0)\not=0$. Then $Q(f)(x)\not=0$ on an open neighborhood
$U$ of $x_0$. Necessarily, $U$ contained in
$s(S)$. Since $G$ is essentially principal and $S$ does not meet $G^{(0)}$, the induced
homeomorphism $\alpha_S: s(S)\rightarrow r(S)$ is not the identity map on
$U$. Therefore, there exists $x_1\in U$ such that
$x_2=\alpha_S(x_1)\not=x_1$. We choose $h\in C_c(G^{(0)})$ such that $h(x_1)=1$ and
$h(x_2)=0$. We have
$hf=f (h\circ\alpha_S)$. Therefore,
$$
Q(f)(x_1)=h(x_1)Q(f)(x_1)=Q(hf)(x_1)=Q(f (h\circ\alpha_S))(x_1)=Q(f)(x_1)h(x_2)=0.$$
This is a contradiction. Therefore $Q(f)=0$.
Next, let us consider an arbitrary $f\in C_c(G,\Sigma)$ with compact support
$K$ in $G$. We use the fact that
$G^{(0)}$ is both open and closed in $G$. The compact set
$K\setminus G^{(0)}$ can be covered by finitely many open bisections $S_1,\ldots, S_n$ of
$G$. Replacing if necessary $S_i$ by $S_i\setminus G^{(0)}$, we may assume that $S_i\cap
G^{(0)}=\emptyset$. We set $S_0=G^{(0)}$. We introduce a partition of unity
$(h_0,h_1,\ldots,h_n)$ subordinate to the open cover $(S_0,S_1,\ldots, S_n)$ of
$K$: for all $i=0,\ldots,n$, $h_i:G\rightarrow [0,1]$ is continuous, it has a compact
support contained in $S_i$ and $\sum_{i=0}^n h_i(\gamma)=1$ for all $\gamma\in K$. We
define $f_i\in C_c(G,\Sigma)$ by $f_i(\sigma)=h_i(\dot\sigma)f(\sigma)$. Then, we have
$f=\sum_{i=0}^n f_i$, $f_0=P(f)$ and $f_i$ has its support contained in $S_i$ for all
$i$. Since $f_0\in C_0(G^{(0)})$, $Q(f_0)=f_0$. On the other hand, according to the
above, $Q(f_i)=0$ for $i=1,\ldots,n$. Therefore, $Q(f)=f_0=P(f)$.
\end{proof}

The C$^*$-module $C_0(G^{(0)},H)$ over $C_0(G^{(0)})$ introduced earlier to define
to define the representation $\pi$ and the reduced norm on
$C_c(G,\Sigma)$ is the completion of $A$ with respect to the
$B$-valued inner product $P(a^*a')$; the representation $\pi$ is left multiplication.

The conditional expectation $P$ will be used to recover the elements of $A$ as sections
of the line bundle $L$:

\begin{lem}\label{formula}  Let $(G,\Sigma)$ be a twisted \'etale
Hausdorff locally compact second countable groupoid. Let
$P: A=C^*_{red}(G,\Sigma)\rightarrow B=C_0(G^{(0)})$ be the restriction map. Then we
have the following formula: for all $\sigma\in \Sigma$, for all $n\in A$ such that
$supp'(n)$ is a bisection containing $\dot\sigma$ and all $a\in A$:
$$P(n^*a)(s(\sigma))=\overline{n(\sigma)}a(\sigma).$$

\end{lem}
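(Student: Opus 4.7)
The plan is to expand the expression $P(n^*a)(s(\sigma))$ directly using the fact, recalled in the paragraph preceding the lemma, that elements of $A=C^*_{red}(G,\Sigma)$ are continuous sections of $L$ and that the convolution formula $f*g(\sigma')=\int f(\sigma'\tau^{-1})g(\tau)\,d\lambda_{s(\sigma')}(\dot\tau)$ remains valid for arbitrary $f,g\in A$, the sum being (absolutely) convergent.

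First I would set $x=s(\sigma)$ and use the identification $h\mapsto\big(\sigma'\mapsto h(r(\sigma'))\bar z\text{ for }\sigma'=(x',z)\in G^{(0)}\times{\bf T}\big)$ to rewrite
\[
P(n^*a)(x)=(n^**a)(\hat x),
\]
where $\hat x$ denotes the unit of $\Sigma$ above $x$. Since $G$ is \'etale, the Haar system $\lambda_x$ is counting measure, so
\[
(n^**a)(\hat x)=\sum_{\dot\tau}n^*(\hat x\tau^{-1})\,a(\tau),
\]
the sum being taken over those $\dot\tau$ for which $\hat x\tau^{-1}$ is composable in $\Sigma$, i.e.\ $s(\tau)=x$. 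For such $\tau$ one has $\hat x\tau^{-1}=\tau^{-1}$, and by the definition of the involution $n^*(\tau^{-1})=\overline{n(\tau)}$, so the sum simplifies to $\sum_{\dot\tau,\ s(\dot\tau)=x}\overline{n(\tau)}a(\tau)$.

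Next I would invoke the hypothesis that $supp'(n)$ is a bisection: since the source map is injective on $supp'(n)$, there is at most one $\dot\tau$ with $s(\dot\tau)=x$ and $n(\tau)\ne 0$. The hypothesis that $\dot\sigma\in supp'(n)$ provides such an element and $s(\dot\sigma)=x$, so this unique contributing class is $\dot\sigma$ itself. All other terms vanish, leaving
\[
P(n^*a)(x)=\overline{n(\sigma)}\,a(\sigma).
\]

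Finally, a small consistency check is the only subtlety: one must verify that $\overline{n(\sigma)}a(\sigma)$ depends only on $\dot\sigma$, not on the chosen representative $\sigma\in\Sigma$. This follows from the equivariance property $f(z\sigma)=f(\sigma)\bar z$ of sections of $L$: replacing $\sigma$ by $z\sigma$ multiplies $n(\sigma)$ by $\bar z$ and $a(\sigma)$ by $\bar z$, so the product $\overline{n(\sigma)}a(\sigma)$ is unchanged. I do not anticipate any serious obstacle; the whole argument is a careful bookkeeping of the \'etale convolution on a bisection, and the hardest part is simply keeping track of the groupoid composability conditions and the ${\bf T}$-equivariance of sections.
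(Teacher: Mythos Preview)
Your argument is correct and is precisely what the paper has in mind: its proof reads in full ``This results from the definitions,'' and you have carefully spelled out that computation. There is no alternative approach here to compare.
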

\begin{proof} This results from the definitions. 
\end{proof}

The last property of the subalgebra $B=C_0(G^{(0)}))$ of $(A=C^*_{red}(G,\Sigma)$
which interests us is that it is regular. This requires the notion of normalizer as
introduced by A.~Kumjian.

\begin{defn} (\cite[1.1]{kum:diagonals}) Let $B$ be a sub C$^*$-algebra of
a C$^*$-algebra $A$. 
\begin{enumerate}
\item Its {\it normalizer} is the set
$$N(B)=\{n\in A: aBa^*\subset B\quad\hbox{and}\quad n^*Bn\subset B\}.$$
\item One says that $B$ is {\it regular} if its normalizer $N(B)$ generates $A$ as a
C$^*$-algebra.
\end{enumerate}
\end{defn}

Before studying the normalizer of $C_0(G^{(0)}))$ in $C^*_{red}(G,\Sigma)$, let us
give some consequences of this definition. We first observe that $B\subset N(B)$ and
$N(B)$ is closed under multiplication and involution. It is also a closed subset of
$A$. We shall always assume that $B$ contains an approximate unit of $A$. This condition is automatically satisfied when $B$ is maximal abelian and $A$ has a unit but this is not so in general (see \cite{was:masas}). We then
have the following obvious fact.

\begin{lem} Assume that $B$ be is a sub C$^*$-algebra of a
C$^*$-algebra
$A$ containing an approximate unit of $A$. Let $n\in N(B)$.
Then $nn^*, n^*n\in B$.
\end{lem}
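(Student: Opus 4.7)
The plan is to exploit the approximate unit hypothesis directly: approximate $nn^*$ (respectively $n^*n$) by elements of the form $n e_\lambda n^*$ (respectively $n^* e_\lambda n$), where $(e_\lambda)$ is an approximate unit of $A$ drawn from $B$, and then use that $B$ is norm-closed.

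More precisely, let $(e_\lambda)_{\lambda\in\Lambda}$ be an approximate unit of $A$ contained in $B$ (which exists by assumption). For any $a\in A$ we have $a\,e_\lambda\to a$ and $e_\lambda\,a\to a$ in norm, so
\[
n\,e_\lambda\,n^*\longrightarrow n n^*\quad\text{and}\quad n^*\,e_\lambda\,n\longrightarrow n^*n.
\]
This is a routine application of the standard estimate $\|n e_\lambda n^* - nn^*\|\le \|n\|\,\|e_\lambda n^* - n^*\|\to 0$.

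By the definition of the normalizer, $n B n^*\subset B$ and $n^* B n\subset B$; since each $e_\lambda\in B$, the elements $n e_\lambda n^*$ and $n^* e_\lambda n$ all lie in $B$. As $B$ is a sub-C*-algebra, it is norm-closed, so the limits $nn^*$ and $n^*n$ lie in $B$.

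There is essentially no obstacle here; the only subtlety is verifying that one may use an approximate unit of $A$ lying in $B$, which is precisely the standing hypothesis of the lemma. (Without that hypothesis, the conclusion can fail, as the reference to \cite{was:masas} indicates.) The argument fits in a few lines and requires only the definition of $N(B)$, the closedness of $B$, and the defining property of the approximate unit.
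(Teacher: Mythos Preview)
Your proof is correct and is precisely the argument the paper has in mind: the lemma is stated in the paper as an ``obvious fact'' with no proof given, and your use of an approximate unit $(e_\lambda)\subset B$ together with $nBn^*\subset B$, $n^*Bn\subset B$ and the closedness of $B$ is the natural way to see it.
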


Assume also that $B$ is abelian.
Let $X=\hat B$ so that $B=C_0(X)$. For $n\in N(B)$, define $dom(n)=\{x\in X:
n^*n(x)>0\}$ and
$ran(n)=\{x\in X: nn^*(x)>0\}$. These are open subsets of $X$.

\begin{prop}(\cite[1.6]{kum:diagonals}) Given $n\in N(B)$, there exists a unique
homeomorphism
$\alpha_n: dom(n)\rightarrow ran(n)$ such that, for all $b\in B$ and all
$x\in dom(n)$,
$$n^*bn(x)=b(\alpha_n(x)) n^*n(x).$$
\end{prop}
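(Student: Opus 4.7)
The plan is to exploit Gelfand duality: for each $x \in dom(n)$ the functional $b \mapsto n^*bn(x)/(n^*n)(x)$ should turn out to be a character on $B = C_0(X)$, and $\alpha_n(x)$ will be defined as the corresponding point of $X$.

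First I would set $\beta(b) = n^*bn$, which maps $B$ into $B$ by the definition of $N(B)$. The crucial algebraic input is the identity
$$\beta(b_1)\beta(b_2) = \beta(b_1 b_2)\,(n^*n),$$
which I would derive as follows: by the previous lemma $nn^* \in B$, so $nn^*$ commutes with $b_2 \in B$, and $(nn^*)n = n(n^*n)$, giving
$$n^*b_1 n n^* b_2 n = n^* b_1 (nn^*) b_2 n = n^* b_1 b_2 (nn^*) n = n^* b_1 b_2 n (n^*n).$$
Fixing $x \in dom(n)$ and setting $\tilde\beta_x(b) = \beta(b)(x)/(n^*n)(x)$, the above identity shows $\tilde\beta_x$ is multiplicative; it is clearly linear, and choosing an approximate unit $(e_\lambda) \subset B$ of $A$ one has $e_\lambda n \to n$, whence $\beta(e_\lambda) \to n^*n$ and $\tilde\beta_x(e_\lambda) \to 1$, so $\tilde\beta_x$ is a nonzero character of $B$. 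Gelfand duality gives a unique $\alpha_n(x) \in X$ with $\tilde\beta_x(b) = b(\alpha_n(x))$, which is the required formula. Uniqueness of $\alpha_n$ is immediate since $B$ separates the points of $X$.

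Next I would check that $\alpha_n$ lands in $ran(n)$ and is continuous. Taking $b = nn^*$ in the formula yields $\beta(nn^*)(x) = n^*nn^*n(x) = (n^*n)^2(x)$, and comparison with $(nn^*)(\alpha_n(x))(n^*n)(x)$ gives the pleasant identity
$$(nn^*)(\alpha_n(x)) = (n^*n)(x),$$
which in particular is strictly positive, so $\alpha_n(x) \in ran(n)$. Continuity follows because $b \circ \alpha_n = \beta(b)/(n^*n)$ is continuous on $dom(n)$ for every $b \in C_0(X)$, and the topology on $X$ is the weak topology generated by $C_0(X)$.

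Finally, applying the same construction to $n^* \in N(B)$ produces $\alpha_{n^*}: ran(n) \to dom(n)$, and I would show $\alpha_{n^*}$ is a two-sided inverse of $\alpha_n$. Using commutativity of $B$ again,
$$n\,\beta(b)\,n^* = nn^* b nn^* = (nn^*)^2 b,$$
so evaluated at $y = \alpha_n(x)$ the defining formula for $\alpha_{n^*}$ yields $\beta(b)(\alpha_{n^*}(y))(nn^*)(y) = (nn^*)^2(y) b(y)$; combined with the symmetric identity $(n^*n)(\alpha_{n^*}(y)) = (nn^*)(y)$ and the formula for $\alpha_n$, this forces $b(\alpha_n(\alpha_{n^*}(y))) = b(y)$ for every $b \in B$, hence $\alpha_n \circ \alpha_{n^*} = \mathrm{id}$ on $ran(n)$; by symmetry the other composition is also the identity, so $\alpha_n$ is a homeomorphism. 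The only real obstacle is finding the commutation identity $\beta(b_1)\beta(b_2) = \beta(b_1b_2)(n^*n)$; everything after that is Gelfand theory plus a symmetric application to $n^*$.
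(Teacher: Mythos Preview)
Your argument is correct. The paper itself gives no proof here, simply citing Kumjian's original paper \cite{kum:diagonals}; your Gelfand-duality approach---showing that $b\mapsto n^*bn(x)/(n^*n)(x)$ is a nonzero character via the identity $\beta(b_1)\beta(b_2)=\beta(b_1b_2)(n^*n)$, then recovering the inverse from the symmetric construction with $n^*$---is the standard one and essentially what Kumjian does.
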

\begin{proof}
See \cite{kum:diagonals}.
\end{proof}
\bigskip

\begin{prop} Let $(G,\Sigma)$ be a twisted \'etale
Hausdorff locally compact second countable groupoid. Let
$A=C^*_{red}(G,\Sigma)$ and $B=C_0(G^{(0)})$ be as above. Then 
\begin{enumerate}
\item If the open support $S=supp'(a)$ of  $a\in A$ is a
bisection of $G$, then $a$ belongs to $N(B)$ and $\alpha_a=\alpha_S$;
\item If $G$ is essentially principal, the converse is true. Namely the normalizer
$N(B)$ consists exactly of the elements of
$A$ whose open support is a bisection.
\end{enumerate} 
\end{prop}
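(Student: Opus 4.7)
The plan is to treat the two parts separately; the content lies almost entirely in (ii).

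For (i), if $S=\mathrm{supp}'(a)$ is an open bisection then $S^{-1}S,\,SS^{-1}\subseteq G^{(0)}$. Viewing $a$ as a continuous section of $L$ and using that in the \'etale case $\lambda_x$ is counting measure on the source fibre, the convolution formula gives, for $b\in B$ and $x\in G^{(0)}$,
$$(a^*ba)(x)=\sum_{\tau\in G_x}|a(\tau)|^2\,b(r(\tau)).$$
Since $S$ is a bisection, $S\cap G_x$ is a singleton when $x\in s(S)$ and empty otherwise, so $(a^*ba)(x)=(a^*a)(x)\,b(\alpha_S(x))$ on $s(S)$ and vanishes elsewhere; at non-unit points $a^*ba$ vanishes because $\mathrm{supp}'(a^*ba)\subseteq S^{-1}S\subseteq G^{(0)}$. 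Hence $a^*Ba\subseteq B$, symmetrically $aBa^*\subseteq B$, so $a\in N(B)$, and the displayed formula identifies $\alpha_a$ with $\alpha_S$ on $\mathrm{dom}(a)=s(S)$.

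For (ii), let $n\in N(B)$. My strategy is first to derive the pointwise constraint $r(\tau)=\alpha_n(s(\tau))$ on $\mathrm{supp}'(n)$, then to use essential principality to collapse the remaining isotropy. Repeating the convolution computation of (i) with $n$ in place of $a$ (which is still valid on $A$ by \propref{conditional expectation} and the convergence of the convolution sum for elements of $C^*_{red}(G,\Sigma)$) yields, for $x\in G^{(0)}$ and $b\in C_0(G^{(0)})$,
$$(n^*bn)(x)=\sum_{\tau\in G_x}|n(\tau)|^2\,b(r(\tau)).$$
Comparing with the defining identity $(n^*bn)(x)=b(\alpha_n(x))(n^*n)(x)$ and letting $b$ vary over $C_0(G^{(0)})$, the finite positive measure $\mu_x:=\sum_{\tau\in G_x}|n(\tau)|^2\,\delta_{r(\tau)}$ on $G^{(0)}$ must equal $(n^*n)(x)\,\delta_{\alpha_n(x)}$, and is therefore concentrated at $\alpha_n(x)$. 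Thus $r(\tau)=\alpha_n(s(\tau))$ for every $\tau\in\mathrm{supp}'(n)$.

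Now suppose $\tau_1,\tau_2\in\mathrm{supp}'(n)$ satisfy $s(\tau_1)=s(\tau_2)$. The constraint forces $r(\tau_1)=r(\tau_2)$, so $\tau_1\tau_2^{-1}\in G'$. Choose open bisections $U_i\subseteq\mathrm{supp}'(n)$ with $\tau_i\in U_i$; applying the pointwise constraint to arbitrary $\tau_i'\in U_i$ shows $U_1U_2^{-1}\subseteq G'$, and products of open bisections are open in an \'etale groupoid. By essential principality, $U_1U_2^{-1}\subseteq G^{(0)}$, whence $\tau_1\tau_2^{-1}\in G^{(0)}$ and $\tau_1=\tau_2$. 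Thus $s$ is injective on $\mathrm{supp}'(n)$; an identical argument for $r$ (or applied to $n^*\in N(B)$) makes $\mathrm{supp}'(n)$ a bisection. The main obstacle is the first move: extracting a rigid structural constraint on $\mathrm{supp}'(n)$ from the normalizer identity via the measure-concentration step; once it is in hand, essential principality cleanly finishes the job.
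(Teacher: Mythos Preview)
Your argument is correct and follows essentially the same route as the paper: the key step in (ii) is the measure-concentration observation that $\sum_{\tau\in G_x}|n(\tau)|^2\,\delta_{r(\tau)}=(n^*n)(x)\,\delta_{\alpha_n(x)}$, which the paper phrases as expressing the pure state $\delta_{\alpha_n(x)}$ as a convex combination of pure states, and both proofs then use essential principality on an open subset of $G'$. The only difference is cosmetic: the paper works directly with $S=\mathrm{supp}'(n)$ and notes that $SS^{-1}$ is already open and contained in $G'$, so there is no need to pass to auxiliary open bisections $U_i$.
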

\begin{proof}
Suppose that $S=supp'(a)$ is a
bisection. Then, for $b\in B$,
$$a^*ba(\sigma)=\int
\overline{a(\tau\sigma^{-1})}b\circ r(\tau)a(\tau)d\lambda_{s(\sigma)}(\dot\tau).$$
The integrand is zero unless $\dot\tau\in S$ and $\dot{\tau\sigma^{-1}}\in S$, which
implies that
$\dot\sigma$ is a unit. Therefore $supp'(a^*ba)\subset G^{(0)}$ and $a^*ba\in B$.
Similarly, $aba^*\in B$. Moreover, if $\dot\sigma=x$ is a unit, we must have
$\dot\tau=Sx$ and therefore 
$$a^*ba(x)=a^*a(x)b\circ r(Sx)=a^*a(x)b\circ \alpha_S(x).$$
This shows that $\alpha_a=\alpha_S$.

Conversely, let us assume that $a$ belongs to $N(B)$. Let $S=supp'(a)$. Let us fix $x\in
dom(a)$. The equality
$$b(\alpha_a(x))=\int
{|a(\tau)|^2\over a^*a(x)}b\circ r(\tau)d\lambda_x(\dot\tau)$$
holds for all $b\in B$. In other words, the pure state $\delta_{\alpha_a(x)}$ is
expressed as a (possibly infinite) convex combination of pure states. This implies that
$a(\tau)=0$ if $r(\tau)\not=\alpha_a(x)$. Let
$$T=\{\gamma\in G: s(\gamma)\in dom(a),\hbox{and}\, r(\gamma)=\alpha_a\circ
s(\gamma)\}.$$
We have established the containment $S\subset T$. This implies $SS^{-1}\subset
TT^{-1}\subset G'$. If $G$ is essentially principal, $SS^{-1}$ which is open must be
contained in $G^{(0)}$. Similarly, $S^{-1}S$ must be
contained in $G^{(0)}$. This shows that $S$ is a bisection.
\end{proof}

\begin{cor} Let $(G,\Sigma)$ be a twisted \'etale
Hausdorff locally compact second countable groupoid. Let
$A=C^*_{red}(G,\Sigma)$. Then 
$B=C_0(G^{(0)})$ is a regular sub-C$^*$-algebra of $A$. 
\end{cor}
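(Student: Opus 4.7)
The plan is to show that the dense $*$-subalgebra $C_c(G,\Sigma)\subset A$ is contained in the linear span of $N(B)$, which immediately yields that $N(B)$ generates $A$ as a C$^*$-algebra.

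The key tool is the previous proposition: any $a\in A$ whose open support $\mathrm{supp}'(a)$ is contained in a bisection lies in $N(B)$, because being a bisection is hereditary under taking subsets (a subset of an $r$-section is an $r$-section, and likewise for $s$-sections). So it suffices to decompose an arbitrary $f\in C_c(G,\Sigma)$ as a finite sum of sections each of whose open support sits inside an open bisection.

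First I would fix $f\in C_c(G,\Sigma)$ with compact support $K\subset G$. Since $G$ is \'etale, every point of $G$ has an open bisection neighborhood, so by compactness $K$ is covered by finitely many open bisections $S_1,\ldots,S_n$. Choose a continuous partition of unity $(h_1,\ldots,h_n)$ on $G$ subordinate to this cover, with each $h_i\in C_c(G)$ supported in $S_i$ and $\sum_i h_i=1$ on $K$. Define $f_i\in C_c(G,\Sigma)$ by $f_i(\sigma)=h_i(\dot\sigma)f(\sigma)$; this uses only the fact that $h_i$ is a scalar-valued function on $G$ multiplying a section of $L$. Then $f=\sum_{i=1}^n f_i$, and $\mathrm{supp}'(f_i)\subset\mathrm{supp}(h_i)\subset S_i$ is a bisection, so by the previous proposition each $f_i$ lies in $N(B)$.

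Finally, $C_c(G,\Sigma)$ is dense in $A=C^*_{red}(G,\Sigma)$ by definition of the reduced norm, and we have just shown $C_c(G,\Sigma)\subset\mathrm{span}\,N(B)$. Hence the closed $*$-subalgebra generated by $N(B)$ contains a dense subset of $A$ and therefore equals $A$, so $B$ is regular. I do not expect any real obstacle here: once one observes that multiplication by a scalar cutoff function sends $C_c(G,\Sigma)$ into itself and respects the bisection condition, the argument is immediate, and the main work (a partition-of-unity decomposition subordinate to a bisection cover) has already been used in the proof of Proposition~\ref{conditional expectation}.
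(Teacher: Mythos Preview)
Your proof is correct and follows exactly the approach of the paper: decompose an arbitrary $f\in C_c(G,\Sigma)$ via a partition of unity subordinate to a finite cover by open bisections, so that each piece has open support contained in (hence equal to) a bisection and thus lies in $N(B)$ by the preceding proposition. The paper's proof is simply a terse statement of this same argument, asserting without details that every $f\in C_c(G,\Sigma)$ is a finite sum of sections supported on open bisections; you have spelled out the partition-of-unity step explicitly.
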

\begin{proof} Since $G$ is \'etale, the open bisections of $G$ form a basis of open
sets for $G$. Every element
$f\in C_c(G,\Sigma)$ can be written as a finite sum of sections supported by open
bisections. Thus the linear span of $N(B)$ contains $C_c(G,\Sigma)$. Therefore,
$N(B)$ generates $A$ as a C$^*$-algebra.
\end{proof}

Let us continue to investigate the properties of the normalizer $N(B)$

\begin{lem}(\cite[1.7]{kum:diagonals}) Let $B$ be a
sub-C$^*$-algebra of a C$^*$-algebra $A$. Assume that $B$ is abelian and contains an
approximate unit of $A$. Then
\begin{enumerate}
\item If $b\in B$, $\alpha_b=id_{dom(b)}$.
\item If $m,n\in N(B)$, $\alpha_{mn}=\alpha_m\circ\alpha_n$ and
$\alpha_{n^*}=\alpha_n^{-1}$.
\end{enumerate} 
\end{lem}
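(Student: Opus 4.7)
The plan is to use the defining property of $\alpha_n$, namely $n^*bn(x)=b(\alpha_n(x))n^*n(x)$ for all $b\in B$ and $x\in dom(n)$, together with the fact that $B=C_0(X)$ separates points of $X=\hat B$. Uniqueness of $\alpha_n$ then follows from this separation property: whenever one exhibits \emph{some} continuous map $\beta:dom(n)\to ran(n)$ satisfying $n^*bn(x)=b(\beta(x))n^*n(x)$ for every $b\in B$, one can divide by $n^*n(x)>0$ on $dom(n)$ and conclude $\beta=\alpha_n$.

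For part (i), since $B$ is abelian, $b^*cb=cb^*b$ in $B$ for every $c\in B$, hence pointwise $b^*cb(x)=c(x)b^*b(x)$ on $X$. Comparing with the defining equation $b^*cb(x)=c(\alpha_b(x))b^*b(x)$ and cancelling $b^*b(x)>0$ on $dom(b)$ gives $c(\alpha_b(x))=c(x)$ for every $c\in B$. Point-separation of $B=C_0(X)$ forces $\alpha_b(x)=x$.

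For the multiplicativity statement in (ii), I first note that since $m^*bm\in B$ and $m^*m\in B$, applying the defining formula for $\alpha_n$ with these two test functions yields
\begin{align*}
(mn)^*b(mn)(x) &= n^*(m^*bm)n(x) = (m^*bm)(\alpha_n(x))\,n^*n(x)\\
 &= b(\alpha_m(\alpha_n(x)))\,m^*m(\alpha_n(x))\,n^*n(x),\\
(mn)^*(mn)(x) &= n^*(m^*m)n(x) = m^*m(\alpha_n(x))\,n^*n(x),
\end{align*}
valid precisely on $dom(mn)=\{x\in dom(n):\alpha_n(x)\in dom(m)\}$, where the common factor $m^*m(\alpha_n(x))\,n^*n(x)$ is strictly positive. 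Dividing and using point-separation by $B$ identifies $\alpha_{mn}(x)$ with $\alpha_m(\alpha_n(x))$.

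Finally, for $\alpha_{n^*}=\alpha_n^{-1}$, I apply the multiplicativity just established to the pairs $(n^*,n)$ and $(n,n^*)$. Since $n^*n, nn^*\in B$, part (i) gives $\alpha_{n^*n}=id_{dom(n^*n)}=id_{dom(n)}$ and $\alpha_{nn^*}=id_{ran(n)}$, so that $\alpha_{n^*}\circ\alpha_n=id_{dom(n)}$ and $\alpha_n\circ\alpha_{n^*}=id_{ran(n)}$; this is exactly the statement that $\alpha_{n^*}$ is the two-sided inverse of $\alpha_n:dom(n)\to ran(n)$. The only minor subtlety to watch throughout is bookkeeping of the open sets $dom$ and $ran$ (in particular verifying $dom(n^*n)=dom(n)$ and $dom(nn^*)=ran(n)$ from the definitions), but no genuine obstacle arises since everything reduces to pointwise identities between positive elements of $B$.
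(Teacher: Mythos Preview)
Your proof is correct. The paper itself does not supply a proof of this lemma; it simply cites \cite[1.7]{kum:diagonals} and moves on, so there is nothing to compare against beyond noting that your argument is the standard one and matches what Kumjian does: use the defining relation $n^*bn(x)=b(\alpha_n(x))\,n^*n(x)$, the fact that $C_0(X)$ separates points, and the positivity estimate $0\le n^*(m^*m)n\le\|m\|^2 n^*n$ to control the domains.
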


This shows that ${\mathcal G}(B)=\{\alpha_a, a\in N(B)\}$ is a pseudogroup on $X$.
By analogy with the canonical action of the inverse semigroup of open bisections of
an \'etale groupoid, we call the map $\underline\alpha: N(B)\rightarrow {\mathcal
G}(B)$ such that $\underline\alpha(n)=\alpha_n$ the canonical action of the
normalizer.

\begin{defn}
We shall say that ${\mathcal G}(B)$ is the {\it Weyl pseudogroup} of $(A,B)$. We define
the Weyl groupoid of $(A,B)$ as the groupoid of germs  of ${\mathcal G}(B)$.
\end{defn}

\begin{prop}\label{commutant} Let $B$ be a
sub-C$^*$-algebra of a C$^*$-algebra $A$. Assume that $B$ is abelian and contains an
approximate unit of $A$. Then:
\begin{enumerate}
\item The kernel of the canonical action $\underline\alpha: N(B)\rightarrow {\mathcal
G}(B)$ is the commutant $N(B)\cap B'$ of $B$ in $N(B)$. 
\item If $B$ is maximal abelian, then $ker\underline\alpha=B$.
\item If $B$ is regular and $ker\underline\alpha=B$, then $B$ is maximal abelian.
\end{enumerate}
\end{prop}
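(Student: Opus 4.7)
\bigskip
\noindent\textbf{Proof plan.}

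For (i), I would tackle both inclusions via the defining relation $n^*bn(x) = b(\alpha_n(x))\,n^*n(x)$ on $\mathrm{dom}(n)$. In the easy direction, if $n\in B'$ then $n^*bn = bn^*n$ (using that $n^*n\in B$ and $B$ is abelian), so $b(\alpha_n(x)) = b(x)$ whenever $n^*n(x)>0$; since $B=C_0(X)$ separates points, this forces $\alpha_n = \mathrm{id}_{\mathrm{dom}(n)}$. Conversely, if $\alpha_n$ is the identity, I would first extend the relation $n^*bn = b\cdot n^*n$ to all of $X$ (both sides vanish where $n^*n$ does, by a Cauchy--Schwarz type bound applied to the positive functional $y\mapsto (n^*yn)(x)$), and then verify $nb = bn$ directly by expanding
$$(bn-nb)^*(bn-nb) = n^*b^*bn - n^*b^*nb - b^*n^*bn + b^*n^*nb.$$
Using abelianness of $B$ and the identity with $b$ replaced by $b^*b$, $b^*$, and $b$ respectively, each of the four terms equals $b^*b\cdot n^*n$, and they cancel pairwise.

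For (ii), everything is formal given (i): if $B$ is maximal abelian then $A\cap B' = B$, so $N(B)\cap B' \subset A\cap B' = B$, and the reverse inclusion $B\subset N(B)\cap B'$ is immediate. Combined with (i) this gives $\ker\underline\alpha = B$.

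For (iii), I would argue by contrapositive: assume $B$ is not maximal abelian and produce an element of $N(B)\cap B'$ not in $B$. Take $a\in (A\cap B')\setminus B$. Since $B$ is regular, $a$ lies in the closed linear span of $N(B)$, so after truncation and normalization I may approximate $a$ by finite sums $\sum c_i n_i$ with $n_i\in N(B)$ whose open supports are bisections. The key manipulation is the following localization: for any $n\in N(B)$ and any $f\in B=C_0(X)$ with $\mathrm{supp}(f)\subset\mathrm{int}\{x: \alpha_n(x)=x\}$, a short computation shows $nf\in N(B)$ and $\alpha_{nf}=\mathrm{id}$, so by hypothesis $nf\in B$. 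Thus each summand $n_i$ contributes to the commutant exactly through its ``$\alpha_{n_i}=\mathrm{id}$-part'', which already lies in $B$. Since $a$ commutes with $B$ but lies outside $B$, this forces one of the $n_i$ (with $\alpha_{n_i}\neq\mathrm{id}$) to survive with a non-trivial commuting piece; a careful accounting, using an element $b\in B$ that separates the relevant germs and passing to the limit, produces the required $n\in N(B)\cap B'\setminus B$.

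The main obstacle is clearly (iii): parts (i) and (ii) are essentially algebraic identities, but (iii) requires mediating between the global regularity assumption and a pointwise/germ-level statement about the $\alpha_n$. The delicate point is extracting a single normalizer element in the kernel from a linear-combination approximation of $a$; the ``cutting by $f$'' operation on the fixed-point set of $\alpha_n$ is the workhorse that makes this extraction possible.
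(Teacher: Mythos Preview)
Your treatment of (i) and (ii) is correct and matches the paper's: the paper uses exactly the expansion of $(nb-bn)^*(nb-bn)$ and the positivity bound $0\le n^*bn\le \|b\|\,n^*n$ to pass from $\mathrm{dom}(n)$ to all of $X$, and then declares (ii) and (iii) ``immediate consequences of (i)''.

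Your argument for (iii), however, has a genuine gap. The paper gives no details here, but your proposed route---approximate $a\in B'\setminus B$ by finite sums $\sum c_in_i$ with $n_i\in N(B)$ and then ``extract'' a single $n\in N(B)\cap B'\setminus B$---does not close. The localization trick (cutting $n_i$ by $f\in B$ supported in the fixed-point set of $\alpha_{n_i}$) correctly shows that such pieces land in $B$, but it gives no control over the \emph{remaining} pieces: there is no reason the ``moving'' parts of the various $n_i$ should reassemble into an element of $B'$, let alone a single normalizer. The phrase ``a careful accounting \dots\ produces the required $n$'' is exactly the missing idea.

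There is a much shorter argument, which incidentally does not use regularity at all. Suppose $a\in B'\cap A$ is self-adjoint with $\|a\|<\pi$. In the unitization $\tilde A$, the element $u=e^{ia}$ is a unitary commuting with $B$, so for every $e\in B_+$ one has $(eu)B(eu)^*=e^2B\subset B$ and likewise on the other side; thus $eu\in N(B)\cap B'$. If $N(B)\cap B'=B$, then $eu\in B$ for every such $e$, and running $e$ through an approximate unit gives $u-1\in B$, hence $u\in \tilde B$. Functional calculus (principal logarithm) then forces $a\in\tilde B\cap A=B$. So $B'\cap A=B$, i.e.\ $B$ is maximal abelian. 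This is presumably what the paper has in mind by ``immediate''; note that the regularity hypothesis in (iii) is in fact superfluous.
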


\begin{proof} If $n\in N(B)\cap B'$, then for all $b\in B$, $n^*bn=bn^*n$. By
comparing with the definition of $\alpha_n$, we see that $\alpha_n(x)=x$ for all
$x\in dom(n)$. Conversely, suppose that $n\in N(B)$ satisfies $n^*bn(x)=b(x)
n^*n(x)$ for all $b\in B$ and all $x\in dom(n)$. We also have $n^*bn(x)=b(x)
n^*n(x)=0$ when $x\notin dom(n)$ because of the inequality $0\le n^*bn\le\|b\|n^*n$
for $b\in B_+$. Therefore
$n^*bn=bn^*n$ for all
$b\in B$. As observed in \cite[1.9]{kum:diagonals}, this implies that
$(nb-bn)^*(nb-bn)=0$ for all $b\in B$ and $nb=bn$ for all $b\in B$. The
assertions $(ii)$ and
$(iii)$ are immediate consequences of $(i)$.
\end{proof}

Let us study  the normalizer $N(B)$ in our particular situation, where
$A=C^*_{red}(G,\Sigma)$ and $B=C_0(G^{(0)})$. 
\bigskip

\begin{prop} Let $(G,\Sigma)$ be a twisted \'etale
Hausdorff locally compact second countable groupoid. Let
$A=C^*_{red}(G,\Sigma)$ and $B=C_0(G^{(0)})$ be as above. Assume that $G$ is
essentially principal. Then,
\begin{enumerate}
\item the Weyl pseudogroup ${\mathcal G}(B)$ of $(A,B)$ consists of the partial
homeomorphisms $\alpha_S$ where $S$ is an open bisection of $G$ such that
the restriction of the associated line bundle $L$ to
$S$ is trivializable;
\item the Weyl groupoid $G(B)$ of
$(A,B)$ is canonically isomorphic to $G$.
\end{enumerate} 
\end{prop}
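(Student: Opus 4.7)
My plan hinges on the preceding proposition, which, assuming essential principality, identifies $N(B)$ with the set of $a \in A$ whose open support is an open bisection of $G$, and shows that $\alpha_a = \alpha_{supp'(a)}$.

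\textbf{Part (i).} The inclusion $\subseteq$ is immediate: for $n \in N(B)$ set $S := supp'(n)$; then $S$ is an open bisection, $\alpha_n = \alpha_S$, and the continuous non-vanishing section $n$ of $L$ over $S$ trivializes $L|_S$. For the converse, given an open bisection $S$ with a trivialization $m:S \to L$ (normalized so $|m|=1$), I will produce $n \in N(B)$ with $supp'(n)=S$. Since $G$ is second countable, locally compact and Hausdorff, it is metrizable; so we may cover $S$ by a sequence $(U_k)$ of open sub-bisections with compact closure $\overline{U_k} \subset S$, and choose non-negative $\phi_k \in C_c(G)$ with $\operatorname{supp}(\phi_k) \subset S$ and $\{\phi_k > 0\} = U_k$. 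Each $n_k := \phi_k m$, extended by $0$ outside $S$, lies in $C_c(G,\Sigma)$; because $n_k$ is supported on a bisection, $\|n_k\| \le \|\phi_k\|_\infty$. After a harmless rescaling, the series $n := \sum_k 2^{-k}\phi_k m$ converges in $A$, and since each summand is a non-negative multiple of $m$ at every point there is no cancellation, so $supp'(n)=\bigcup_k U_k = S$. Hence $\alpha_n = \alpha_S$ belongs to $\mathcal{G}(B)$.

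\textbf{Part (ii).} Define $\Phi: G(B) \to G$ by sending a germ $[y,\alpha_n,x]$ to the unique element of $supp'(n)$ with source $x$. To show $\Phi$ is well-defined, suppose $\alpha_n$ and $\alpha_{n'}$ agree on a neighborhood $V$ of $x$, and set $S=supp'(n)$, $S'=supp'(n')$. The inverse-semigroup product
\[
W := (SV)^{-1}(S'V) = \{\, (St)^{-1}(S't) : t \in V\,\}
\]
is a product of open bisections, hence open in $G$, and each of its elements lies in $G'$ because $\alpha_S(t)=\alpha_{S'}(t)$ for $t \in V$. Essential principality then forces $W \subset G^{(0)}$, whence $Sx = S'x$. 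Injectivity is similar: if $Sx = S'x = \gamma$, then $S \cap S'$ is an open bisection through $\gamma$ on whose source $\alpha_S$ and $\alpha_{S'}$ coincide, so the germs at $x$ agree. Surjectivity comes from part (i): any $\gamma \in G$ has an open bisection neighborhood $S$ small enough that $L|_S$ is trivializable, and $\Phi$ then maps the germ of $\alpha_S$ at $s(\gamma)$ to $\gamma$. Compatibility with source, range, product and inverse is routine. Finally $\Phi$ is a homeomorphism because a basic open set $\mathcal{U}(U,\alpha_S,V)$ of the groupoid of germs is carried bijectively onto the open set $SV \cap r^{-1}(U) \subset G$.

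The principal obstacle is the reverse inclusion in (i): producing a single normalizer whose open support is exactly the prescribed bisection $S$ requires both the non-cancellation afforded by a global trivialization of $L|_S$ and the $\|\cdot\|_\infty$-control of the reduced norm available for bisection-supported sections. Once (i) is in hand, part (ii) reduces to the interplay between open sub-bisections and essential principality, which is precisely what makes the inverse-semigroup product $W$ collapse into $G^{(0)}$.
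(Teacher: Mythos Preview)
Your proof is correct and shares the paper's overall strategy, but the execution differs in two places worth noting. For the reverse inclusion in (i), the paper takes a more economical route: given a unit trivialization $u$ of $L_{|S}$, it chooses a single $h\in C_0(G^{(0)})$ with $supp'(h)=s(S)$, sets $n(\gamma)=u(\gamma)h(s(\gamma))$ (extended by zero), and shows $n\in A$ by approximating $h$ in the $\|\cdot\|_I$-norm by compactly supported functions; your summation of rescaled bump sections works equally well but is less direct. For (ii), the paper does not build $\Phi$ by hand: it observes that $\mathcal{G}(B)$ and the full pseudogroup $\mathcal{G}=\alpha(\mathcal{S})$ generate the same ample pseudogroup (hence the same groupoid of germs), and then invokes \corref{essentially principal} to identify that groupoid with $G$. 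Your explicit construction of $\Phi$ is essentially an inlined re-proof of that corollary, with your open set $W=(SV)^{-1}(S'V)\subset G'$ playing the same role as the bisection $ST^{-1}$ there. One point you assert without justification is that every $\gamma\in G$ has an open bisection neighborhood on which $L$ is trivializable; the paper secures this by appealing to a theorem of Douady and Soglio-H\'erault (in the appendix of \cite{fel:bundles}) on the existence of enough continuous sections of Banach bundles, and you should cite something to the same effect.
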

\begin{proof} Recall that ${\mathcal S}$ denotes the inverse semigroup of open
bisections of $G$ and ${\mathcal G}$ denotes the pseudogroup defined by ${\mathcal
S}$. We have defined the canonical action
$\alpha: {\mathcal S}\rightarrow {\mathcal G}$ and the canonical action
$\underline\alpha: N(B)\rightarrow {\mathcal G}(B)$. We have seen that $\alpha$
and $\underline\alpha$ are related by $\underline\alpha=\alpha\circ supp'$, where
$supp'(n)$ denotes the open support of
$n\in N(B)$,
Moreover, the restriction of the line bundle to $S=supp'(n)$ is
trivializable, since it possesses a non-vanishing section. Conversely, let
$S$ be an open bisection such that the restriction $L_{|S}$ is trivializable. Let us
choose a non-vanishing continuous section $u:S\rightarrow L$. Replacing $u(\gamma)$ by
$u(\gamma)/\|u(\gamma)\|$, we may assume that $\|u(\gamma)\|=1$ for all $\gamma\in S$.
Then, we choose 
$h\in C_0(G^{(0)})$ such that $supp'(h)=s(S)$ and define the section $n:G\rightarrow
L$ by $n(\gamma)=u(\gamma)h\circ s(\gamma)$ if $\gamma\in S$ and $n(\gamma)=0$
otherwise. Let $(h_i)$ be a sequence in $C_c(G^{(0)})$, with $supp(h_i)\subset s(S)$,
converging uniformly to $h$. Then $uh_i\in C_c(G,\Sigma)$ and the sequence $(uh_i)$
converges to
$n$ in the norm $\|.\|_I$ introduced earlier. This implies that $n$ belongs to $A$.
We have $S=supp'(n)$ as desired. This shows that ${\mathcal G}(B)$ is exactly the
pseudogroup consisting of the partial homeomorphisms $\alpha_S$ such that
$S$ is an open bisection of $G$ on which $L$ is trivializable. According to a theorem
of Douady and Soglio-H\'erault (see Appendix of \cite{fel:bundles}), for all open
bisection
$S$ and all
$\gamma\in S$, there exists an open neighborhood $T$ of $\gamma$ contained in $S$ on
which $L$ is trivializable. Therefore ${\mathcal G}(B)$ and the pseudogroup $\mathcal
G$ defined by all open bisections have the same groupoid of germs, which is
isomorphic to $G$ by 
\corref{essentially principal}.
\end{proof}

Let us see next how the twist $\Sigma$ over $G$ can be recovered from the pair $(A,B)$.
This is done exactly as in Section 3 of \cite{kum:diagonals}. Given an abstract pair
$(A,B)$, we set $X=\hat B$ and introduce
$$D=\{(x,n,y)\in X\times N(B)\times X: n^*n(y)>0\,\,\hbox{and}\,\, x=\alpha_n(y)\}$$
and its quotient $\Sigma(B)=D/\sim$ by the equivalence relation:
$(x,n,y)\sim (x', n',y')$ if and only if $y=y'$ and there exist $b,b'\in B$ with
$b(y),b'(y)>0$ such that $nb=n'b'$. The class of $(x,n,y)$ is denoted by $[x,n,y]$.
Now
$\Sigma(B)$ has a natural structure of groupoid over $X$, defined exactly in the same
fashion as a groupoid of germs:  the range and source maps are defined by
$r[x,n,y]=x, s[x,n,y]=y$, the product by $[x,n,y][y,n',z]=[x,nn',z]$ and the
inverse by $[x,n,y]^{-1}=[y,n^*,x]$.

The map $(x,n,y)\to [x,\alpha_n,y]$ from $D$ to $G(B)$ factors through the
quotient and defines a groupoid homomorphism from $\Sigma(B)$ onto $G(B)$. Moreover
the subset ${\mathcal B}=\{[x,b,x]: b\in B, b(x)\not=0\}\subset\Sigma(B)$ can be
identified with the trivial group bundle ${\bf T}\times X$ via the map
$[x,b,x]\mapsto (b(x)/|b(x)|,x)$. In general, ${\mathcal
B}\rightarrow\Sigma(B)\rightarrow G(B)$ is not an extension, but this is the case
when $B$ is maximal abelian.

\begin{prop}\label{twist} Assume that $B$ is a masa in $A$ containing an
approximate unit of $A$. Then
$${\mathcal
B}\rightarrow\Sigma(B)\rightarrow G(B)$$
is (algebraically) an extension.
\end{prop}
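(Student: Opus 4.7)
The plan is to verify the three conditions that make $\mathcal{B}\to\Sigma(B)\to G(B)$ an algebraic groupoid extension: (i) $\mathcal{B}$ is a subgroupoid of $\Sigma(B)$; (ii) the quotient map $q\colon\Sigma(B)\to G(B)$, $[x,n,y]\mapsto [x,\alpha_n,y]$, is a well-defined surjective homomorphism; and (iii) $q^{-1}(G(B)^{(0)})=\mathcal{B}$. Parts (i), (ii), and the inclusion $\mathcal{B}\subseteq q^{-1}(G(B)^{(0)})$ do not use the masa hypothesis. For (i), $[x,b,x][x,b',x]=[x,bb',x]$ still lies in $\mathcal{B}$ because $(bb')(x)\neq 0$, and inverses are handled similarly. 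For the well-definedness of $q$, the central calculation is that for $n\in N(B)$ and $b\in B$ the identity $(nb)^*b_1(nb)=|b|^2\,(b_1\circ\alpha_n)\,n^*n$ gives $\alpha_{nb}=\alpha_n|_{\mathrm{dom}(nb)}$; consequently, if $nb_1=n'b_1'$ with $b_1(y),b_1'(y)>0$, then $\alpha_n$ and $\alpha_{n'}$ coincide on a neighborhood of $y$ and hence have equal germ there. Surjectivity of $q$ is immediate because $\mathcal{G}(B)=\underline\alpha(N(B))$ by definition, and the inclusion $\mathcal{B}\subseteq q^{-1}(G(B)^{(0)})$ follows because $\alpha_b=\mathrm{id}_{\mathrm{dom}(b)}$ for $b\in B$ (first part of the lemma preceding Proposition~\ref{commutant}).

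The content of the proposition lies in the reverse inclusion $q^{-1}(G(B)^{(0)})\subseteq\mathcal{B}$, and the masa hypothesis enters via Proposition~\ref{commutant}(ii), which gives $\ker\underline\alpha=B$. Suppose $[y,n,y]\in q^{-1}(G(B)^{(0)})$, so the germ of $\alpha_n$ at $y$ is the identity. Pick an open neighborhood $V\subseteq\mathrm{dom}(n)$ of $y$ on which $\alpha_n$ is the identity, and choose $b\in C_c(X)\subseteq B$ with $\mathrm{supp}(b)\subseteq V$ and $b(y)>0$. Then $nb\in N(B)$, and since $\alpha_{nb}=\alpha_n|_{\mathrm{dom}(nb)}$ with $\mathrm{dom}(nb)\subseteq V$, the partial homeomorphism $\alpha_{nb}$ is the identity on the whole of its domain. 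Proposition~\ref{commutant}(ii) then forces $b':=nb\in B$, and from $|b'(y)|^2=|b(y)|^2\,n^*n(y)>0$ we obtain $b'(y)\neq 0$.

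To conclude, I must exhibit $(y,n,y)\sim(y,b',y)$ in the defining equivalence on $D$, that is, produce $b_1,b_2\in B$ with $b_1(y),b_2(y)>0$ and $nb_1=b'b_2$. Take $b_1=b$, so $nb_1=b'$. Since $|b'|^2=|b|^2\,n^*n$ has support contained in the compact set $\mathrm{supp}(b)$, the set $\mathrm{supp}(b')$ is compact in $X=\hat B$, so one can choose $b_2\in B=C_0(X)$ equal to $1$ on a neighborhood of $\mathrm{supp}(b')$ with $b_2(y)>0$; then $b'b_2=b'=nb_1$, as required, and therefore $[y,n,y]=[y,b',y]\in\mathcal{B}$.

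The main obstacle is this middle step: upgrading the \emph{germ}-level identity of $\alpha_n$ at $y$ to a \emph{global} identity of $\alpha_{nb}$ on all of $\mathrm{dom}(nb)$ via the restriction trick $n\mapsto nb$, so that the masa hypothesis (Proposition~\ref{commutant}(ii)) can be applied to conclude $nb\in B$, and then converting that global containment into an equivalence of triples in $\Sigma(B)$, which is defined by a global equation in $A$ rather than a germ condition at $y$. A short bump-function argument handles the latter conversion.
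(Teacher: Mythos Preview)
Your proof is correct and follows essentially the same approach as the paper: localize by multiplying $n$ by a bump function $b$ supported where the germ of $\alpha_n$ is trivial, invoke \propref{commutant}(ii) to conclude $nb\in B$, and deduce $[y,n,y]=[y,nb,y]\in\mathcal{B}$. The paper's proof is terser and simply asserts $[x,n,x]=[x,n',x]$ at the end, whereas you carefully spell out the equivalence of triples via the auxiliary $b_2$; this extra care is justified and fills a small gap in the paper's exposition, but the underlying argument is identical.
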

\begin{proof} We have to check that an element $[x,n,y]$ of $\Sigma(B)$ which has a
trivial image in $G(B)$ belongs to $\mathcal B$. If the germ of $\alpha_n$ at $y$ is
the identity, then $x=y$ and we have a neighborhood $U$ of $y$ contained in $dom(n)$
such that
$\alpha_n(z)=\alpha_{n^*}(z)=z$ for all $z\in U$.  
We choose
$b\in B$ with compact support contained in $U$ and such that
$b(x)>0$  and we define
$n'=nb$. Then $\alpha_{n'}$ is trivial. According to
\propref{commutant}, $n'$
belongs to $B$ and $[x,n,x]=[x,n',x]$ belongs to $\mathcal B$.
\end{proof}

We shall refer to $\Sigma(B)$ as the Weyl twist of the pair $(A,B)$.

\begin{prop} Let $(G,\Sigma)$ be a twisted \'etale
Hausdorff locally compact second countable essentially
principal groupoid. Let
$A=C^*_{red}(G,\Sigma)$ and $B=C_0(G^{(0)})$ be as above. Then we have a canonical
isomorphism of extensions:
$$\begin{CD}
 {\mathcal B} @>{ }>>\Sigma(B) @>{}>>G(B) \\
@V{}VV      @V{}VV      @V{}VV\\
{\bf T}\times G^{(0)} @>{ }>> \Sigma @>{ }>> G
\end{CD}$$ 
\end{prop}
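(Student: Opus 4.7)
The plan is to construct an explicit isomorphism $\Phi\colon \Sigma(B)\to\Sigma$ covering the already-established isomorphism $G(B)\cong G$ from the previous proposition, and verify that it restricts on the kernel to the natural identification $\mathcal{B}\cong\mathbf{T}\times G^{(0)}$. Once these three pieces are in place, the commutativity of the diagram and the isomorphism statement follow at once.

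To define $\Phi$, I would take $(x,n,y)\in D$. By the preceding proposition, $S=\operatorname{supp}'(n)$ is an open bisection of $G$ on which $L$ is trivializable, and $y\in\operatorname{dom}(n)=s(S)$. There is therefore a unique $\gamma\in S$ with $s(\gamma)=y$, and by \lemref{formula} we have $r(\gamma)=\alpha_{n}(y)=x$. Viewing $n$ as a continuous function on $\Sigma$ satisfying $n(z\sigma)=\bar z\,n(\sigma)$, the fiber $\Sigma_\gamma$ contains a unique element $\sigma$ with $n(\sigma)>0$; I set $\Phi(x,n,y)=\sigma$. To see that $\Phi$ descends to $\Sigma(B)$, suppose $(x,n,y)\sim(x',n',y)$, witnessed by $b,b'\in B$ with $b(y),b'(y)>0$ and $nb=n'b'$. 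Then for any $\sigma$ above $\gamma$ one has $n(\sigma)b(y)=n'(\sigma)b'(y)$, so the positive-value lift for $n$ coincides with that for $n'$, and $\Phi$ is well defined.

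Next I would verify the three structural properties. Continuity of $\Phi$ is immediate from the fact that $\gamma\mapsto$ (positive lift of $\gamma$ in $\Sigma$ relative to $n$) is a continuous section of $\Sigma\to G$ over $S$. For the groupoid homomorphism property, a direct computation using $(mn)(\sigma\tau)=m(\sigma)n(\tau)$ on the appropriate fibers shows $\Phi([x,m,y][y,n,z])=\Phi([x,mn,z])$, and similarly for inverses via $n^*(\sigma^{-1})=\overline{n(\sigma)}$. Restricted to $\mathcal{B}$, an element $[x,b,x]$ with $b\in B$, $b(x)\neq 0$ is sent to the unique $(x,z)\in\mathbf{T}\times G^{(0)}\subset\Sigma$ satisfying $b(x)\bar z>0$, i.e.\ $z=b(x)/|b(x)|$, which matches the identification given just before \propref{twist}. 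Finally, the composition $\Sigma(B)\xrightarrow{\Phi}\Sigma\to G$ sends $[x,n,y]$ to $\gamma=\dot\sigma$, which is precisely the image of $[x,n,y]$ under the canonical map $\Sigma(B)\to G(B)$ followed by the isomorphism $G(B)\cong G$ of the previous proposition.

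Bijectivity of $\Phi$ can either be checked by the five-lemma, using that the outer vertical arrows are isomorphisms and that everything in sight is a central $\mathbf{T}$-extension, or more concretely by exhibiting an inverse: given $\sigma\in\Sigma$ with $\gamma=\dot\sigma$, choose any open bisection $S\ni\gamma$ on which $L$ trivializes and, as in the proof of the previous proposition, construct $n\in N(B)$ with $\operatorname{supp}'(n)=S$ and with $n(\sigma)>0$ (obtained by normalizing a non-vanishing continuous section of $L_{|S}$ and cutting with a bump function at $s(\gamma)$); then send $\sigma$ to $[r(\gamma),n,s(\gamma)]$. Independence from the choice of $S$ and $n$ is exactly the equivalence relation $\sim$. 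The main technical point to be careful about is the well-definedness and multiplicativity of $\Phi$ on classes, because one must track how the positivity convention for the chosen lift behaves under the relation $nb=n'b'$ and under composition; once this is handled the rest of the verifications are routine.
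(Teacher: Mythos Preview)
Your proof is correct and follows essentially the same approach as the paper. The only cosmetic difference is that the paper phrases the map as $[x,n,y]\mapsto n(Sy)/\sqrt{n^*n(y)}$, viewing $n$ as a section of the line bundle $L$ and normalizing to a unitary element of $L$ (identified with $\Sigma$), whereas you phrase it in the dual picture of $n$ as a function on $\Sigma$ and pick the unique lift $\sigma$ in the fiber with $n(\sigma)>0$; these are the same point of $\Sigma$, and the subsequent verifications (well-definedness via $nb=n'b'$, multiplicativity via $(mn)(\sigma\tau)=m(\sigma)n(\tau)$, restriction to $\mathcal B$, and commutativity over $G(B)\cong G$) are carried out in the same way.
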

\begin{proof} The left and right vertical arrows have been already defined and shown
to be isomorphisms. It suffices to define the middle vertical arrow and show that it
is a groupoid homomorphism which makes the diagram commutative. Let
$(x,n,y)\in D$. Since $n$ belongs to $N(B)$ and $G$ is essentially principal,
$S=supp'(n)$ is an open bisection of $G$. The element $n(Sy)/\sqrt{n^*n(y)}$ is a
unitary element of $L$ because $n^*n(y)=\|n(Sy)\|^2$ and can therefore be viewed as
an element of $\Sigma$. Let $(x,n',y)\sim(x,n,y)$. There exist $b,b'\in B$ with
$b(y),b'(y)>0$ such that $nb=n'b'$. This implies that the open supports $S=supp'(n)$
and
$S=supp'(n)$ agree on some neighborhood of $Sy$. In particular, $Sy=S'y$ . Moreover,
the equality $n(Sy)b(y)=n'(Sy)b'(y)$ implies that
$n(Sy)/\sqrt{n^*n(y)}=n'(Sy)/\sqrt{{n'}^*n'(y)}$. Thus we have a well-defined map
$\Phi:[x,n,y]\mapsto (n(Sy)/\sqrt{n^*n(y)}, Sy)$ from $\Sigma(B)$ to $\Sigma$. Let us
check that it is a groupoid homomorphism. Suppose that we are given
$(x,m,y),(y,n,z)\in D$. Let
$S=supp'(m)$,
$T=supp'(n)$. Then $supp'(mn)=ST$. We have to check the equality
$${mn(STz)\over
\sqrt{(mn)^*mn(z)}}={m(Sy)\over\sqrt{m^*m(y)}}{n(Tz)\over\sqrt{n^*n(z)}}.$$
It is satisfied because $mn(STz)=m(Sy)n(Tz)$ and 
$$(mn)^*(mn)(z)=(n^*(m^*m)n)(z)=(m^*m)(\alpha_n(z))n^*n(z)=m^*m(y)n^*n(z).$$
The image of $[x,n,y]^{-1}=[y,n^*,x]$ is
$n^*(S^{-1}x)/\sqrt{nn^*(x)}=(n(xS))^*/\sqrt{nn^*(x)}$. It is the inverse of 
$n(Sy)/\sqrt{n^*n(y)}$ because $xS=Sy$ and $nn^*(x)=n^*n(y)$ and the involution
agrees with the inverse on $\Sigma\subset L$. Let us check that we have a commutative
diagram. The restriction of $\Phi$ to $\mathcal B$ sends $[x,b,x]$ to
$(b(x)/|b(x)|,x)$. This is exactly the left vertical arrow. The image of $[x,n,y]$
in $G(B)$ is the germ $[x,\alpha_n,y]$. The image of $(n(Sy)/\sqrt{n^*n(y)}, Sy)$ in
$G$ is $Sy$. The map $[x,\alpha_n,y]\mapsto Sy$ is indeed the canonical isomorphism
from $G(B)$ onto $G$.

\end{proof}
In the previous proposition, we have viewed $\Sigma(B)$ as an algebraic extension.
It is easy to recover the topology of $\Sigma(B)$. Indeed, as we have already seen,
every $n\in N(B)$ defines a trivialization of the restriction of $\Sigma(B)$ to the
open bisection $S=supp'(n)$. This holds in the abstract framework.  Assume that $B$
is a masa in $A$. Let $n\in N(B)$. Its open support is by definition the open
bisection $S\subset G(B)$ which induces the same partial homeomorphism as $n$. We
define the bijection
$$\varphi_n: {\bf T}\times dom(n)\rightarrow \Sigma(B)_{|S},$$
by $\varphi_n(t,x)=[\alpha_n(x),tn,x]$.

\begin{lem} (cf.\cite[Section 3]{kum:diagonals})  Assume that $B$ is a masa in $A$ containing an approximate unit of $A$. With above notation, 
\begin{enumerate}
\item Two elements $n_1,n_2\in N(B)$ which have the same open support $S$ define
compatible trivializations of $\Sigma(B)_{|S}$. 
\item $\Sigma(B)$ is a locally trivial topological twist over $G(B)$.
\end{enumerate} 
\end{lem}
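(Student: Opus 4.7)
My plan is to establish (i) by explicitly computing the transition between the trivializations $\varphi_{n_1}$ and $\varphi_{n_2}$, and then to deduce (ii) by using the $\varphi_n$ to topologize $\Sigma(B)$.

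For (i), assume $n_1, n_2 \in N(B)$ share the same open support $S$, so $\alpha_{n_1} = \alpha_{n_2} =: \alpha_S$ on the common domain $U = dom(n_1) = dom(n_2)$. The first key observation is that $c := n_2 n_1^*$ lies in $B$: since $\alpha_c = \alpha_{n_2}\circ\alpha_{n_1}^{-1}$ is the identity on its domain, $c$ belongs to the kernel of the canonical action, which by \propref{commutant} equals $B$ (using that $B$ is a masa). The conjugation identity $n_2 b n_2^* = (b\circ\alpha_{n_2}^{-1})\cdot n_2 n_2^*$ applied with $b = n_1^* n_1$ yields
$$cc^* \;=\; (n_1^*n_1 \circ \alpha_S^{-1})\cdot n_2 n_2^*,$$
which is strictly positive on $ran(n_i) = \alpha_S(U)$. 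Hence $g(x) := c(\alpha_S(x))/|c(\alpha_S(x))|$ defines a continuous map $U \to {\bf T}$.

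Next I would show that this $g$ is indeed the transition function. Setting $e_1 = n_1^* n_1 \in B_+$, one has $n_2 e_1 = c n_1$ by construction, and the normalizer formula $n_1^* c n_1 = (c\circ\alpha_S) e_1$ combined with the commutativity of $B$ gives $c n_1 e_1 = n_1 (c\circ\alpha_S) e_1$, so that
$$n_2 e_1^2 \;=\; n_1 (c\circ\alpha_S)\cdot e_1$$
as an identity in $A$. Taking a bump function supported near $x$ to promote $c\circ\alpha_S$ to an element $b\in B$ that agrees with it near $x$, the defining equivalence on $D$ then yields
$$[\alpha_S(x), n_2, x] \;=\; [\alpha_S(x), n_1 b, x] \;=\; [\alpha_S(x), n_1, x]\cdot[x, b, x]$$
with $b(x)/|b(x)| = g(x)$. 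The rightmost factor corresponds to $(g(x), x)$ under $\mathcal{B} \cong {\bf T}\times X$, so centrality of the extension gives $\varphi_{n_2}(t, x) = \varphi_{n_1}(t g(x), x)$; the continuity of $g$ is the content of (i).

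For (ii), the sets $\Sigma(B)_{|supp'(n)}$ with $n \in N(B)$ cover $\Sigma(B)$, and by (i) the topologies they acquire via the $\varphi_n$ agree on overlaps. Declaring each $\varphi_n$ to be a homeomorphism thus defines a topology on $\Sigma(B)$ for which the projection to $G(B)$ is locally trivial with fibre ${\bf T}$, and the subset $\mathcal{B}$ recovers ${\bf T} \times X$ (via $\varphi_n$ for $n \in B$). Continuity of the groupoid operations follows from $N(B)$ being closed under products and involutions: in local charts, multiplication reads $\varphi_{n_1}(t_1, \alpha_{n_2}(z))\cdot\varphi_{n_2}(t_2, z) = \varphi_{n_1 n_2}(t_1 t_2, z)$ and inversion reads $\varphi_n(t, x)^{-1} = \varphi_{n^*}(\bar t, \alpha_n(x))$, both manifestly continuous. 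The main obstacle I anticipate is the transition computation in (i) --- extracting the correct formula for $g$ and verifying the equivalence on $D$ requires some care --- whereas (ii) reduces to routine bookkeeping with the local charts once (i) is in hand.
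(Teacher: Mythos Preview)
Your proposal is correct and follows essentially the same strategy as the paper: both use \propref{commutant} (the kernel of the canonical action equals $B$) to produce an element of $B$ linking $n_1$ and $n_2$, then read off the transition function and topologize $\Sigma(B)$ via the local charts $\varphi_n$. The paper phrases the first step as ``there exist $b_1,b_2\in B$ non-vanishing on $s(S)$ with $n_1b_1=n_2b_2$'' and writes the transition as $u(x)=\dfrac{b_2(x)\,|b_1(x)|}{|b_2(x)|\,b_1(x)}$, whereas you work directly with $c=n_2n_1^*\in B$ and extract $g(x)=c(\alpha_S(x))/|c(\alpha_S(x))|$; these are two packagings of the same computation (indeed your identity $n_2e_1^2=n_1(c\circ\alpha_S)e_1$ is exactly a relation of the form $n_2b_2=n_1b_1$). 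Your part (ii) is somewhat more explicit than the paper's in checking continuity of the groupoid operations, but the content is the same.
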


\begin{proof} For $(i)$, assume that $n_1$ and $n_2$ have the same open support $S$.
Then, according to
\propref{commutant}, there exist $b_1,b_2\in B$, non vanishing on $s(S)$ and such
that $n_1b_1=n_2b_2$. A simple computation from the relation
$\varphi_{n_1}(t_1,x)=\varphi_{n_2}(t_2,x)$ and the fact that for $n\in N(B)$ and
$b\in B$, the equality $nb=0$ implies $b(x)=0$ whenever $n^*n(x)>0$ gives $t_2=t_1
u(x)$ where $u(x)={b_2(x)|b_1(x)|\over |b_2(x)| b_1(x)}$. Therefore, the transition
function is a homeomorphism. We deduce $(ii)$. Indeed, we have given a topology to
$\Sigma(B)_{|S}$ whenever $S$ is a bisection arising from the Weyl pseudogroup
${\mathcal G}(B)$. This family, which is stable under finite intersection and which
covers
$\Sigma(B)$, is a base of open sets for the desired topology.
\end{proof}

\vskip 3mm
\section{Cartan subalgebras in C$^*$-algebras}

Motivated by the properties of the pair $(A=C^*_{red}(G,\Sigma), B=C_0(G^{(0)}))$
arising from a twisted \'etale locally compact second countable Hausdorff essentially
principal groupoid, we make the following definition, analogous to \cite[Definition
3.1]{fm:relations II} of a Cartan subalgebra in a von Neumann algebra. We shall
always assume that the ambient C$^*$-algebra $A$ is separable.

\begin{defn} We shall say that an abelian
sub-C$^*$-algebra $B$ of a C$^*$-algebra $A$
is a {\it Cartan subalgebra} if
\begin{enumerate}
\item $B$ contains an approximate unit of $A$;
\item $B$ is maximal abelian;
\item $B$ is regular;
\item there exists a faithful conditional expectation $P$ of $A$ onto $B$.
\end{enumerate}
Then $(A,B)$ is called a Cartan pair.
\end{defn}

Let us give some comments about the  definition. First, when $A$ has a unit, a maximal abelian sub-C$^*$-algebra
necessarily contains the unit; however, as said earlier,  there exist maximal abelian
sub-C$^*$-algebras which do not contain an approximate unit for the ambient
C$^*$-algebra. Since in our models, namely \'etale groupoid C$^*$-algebras, the subalgebra corresponding to the unit space always contains an approximate unit of $A$, we have to make this assumption. Second this definition of a Cartan subalgebra should be compared to the Definition 1.3 of a C$^*$-diagonal given by
A.~Kumjian in \cite{kum:diagonals} (see also \cite{ren:dual}): there it is assumed
that $B$ has
the unique extension property, a property introduced by J.~Anderson and studied by
R.~Archbold et alii. If $B$ has the unique extension property (and under the assumption that it contains an approximate unit of $A$), it is maximal abelian and there exists one and only one conditional expectation onto $B$. We shall say more about the unique extension property when we compare \thmref{Cartan} and Kumjian's theorem.

Given a Cartan pair $(A,B)$, we construct the normalizer $N(B)$, the Weyl groupoid
$G(B)$ on $X=\hat B$, the Weyl twist $\Sigma(B)$ and the associated line bundle
$L(B)$. In fact, these constructions can be made under the sole assumption that $B$
is a {\it masa}. Let us see how the elements of
$A$ define sections of the line bundle
$L(B)$ or equivalently, functions $f:\Sigma\rightarrow {\bf C}$ satisfying
$f(t\sigma)=\overline t f(\sigma)$ for all $t\in{\bf T}$ and $\sigma\in\Sigma(B)$.
The answer is given by \lemref{formula} (this formula also appears in
\cite{kum:diagonals}). Recall that $\Sigma(B)$ is defined as a quotient of
$$D=\{(x,n,y)\in X\times N(B)\times X: n^*n(y)>0\,\,\hbox{and}\,\, x=\alpha_n(y)\}.$$

\begin{lem}\label{evaluation} Given
$a\in A$ and $(x,n,y)\in D$, we define
$$\hat a(x,n,y)={P(n^*a)(y)\over\sqrt{n^*n(y)}}.$$
Then
\begin{enumerate}
\item $\hat a(x,n,y)$ depends only on its class in $\Sigma(B)$;
\item $\hat a$ defines a continuous section of the line bundle  $L(B)$;
\item the map $ a\mapsto\hat a$ is linear and injective.
\end{enumerate}
\end{lem}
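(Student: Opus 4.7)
The plan is to handle the three assertions in order, with (i) reducing to an elementary algebraic manipulation, (ii) a local trivialization computation, and (iii) a density argument combined with faithfulness of $P$.

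For (i), I must show two things: first, if $(x,n,y)\sim(x,n',y)$, i.e.\ there exist $b,b'\in B$ with $b(y),b'(y)>0$ and $nb=n'b'$, then the ratio is unchanged; second, that the resulting function on $\Sigma(B)$ transforms correctly under $\mathbf T$, so it defines a section of $L(B)$. For the first, since $B$ is commutative and $P$ is a $B$-bimodule map, applying $P(\,\cdot\,a)$ to $(nb)^*=b^*n^*$ gives $\overline{b(y)}\,P(n^*a)(y)=\overline{b'(y)}\,P({n'}^*a)(y)$, while $(nb)^*(nb)=|b|^2 n^*n$ at $y$ yields $b(y)\sqrt{n^*n(y)}=b'(y)\sqrt{{n'}^*n'(y)}$ (both $b(y),b'(y)$ are positive reals). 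Dividing shows the ratio depends only on the class. For the $\mathbf T$-action, representing $z\cdot[x,n,y]=[x,zn,y]$, one computes directly $\hat a(x,zn,y)=\bar z\,\hat a(x,n,y)$, which is exactly the equivariance property of a section of $L(B)$.

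For (ii), continuity is a local question. Recall the trivializations $\varphi_n:\mathbf T\times\mathrm{dom}(n)\to\Sigma(B)|_S$ from the preceding lemma, which cover $\Sigma(B)$. On such a chart,
\[
\hat a\circ\varphi_n(t,y)=\bar t\,\frac{P(n^*a)(y)}{\sqrt{n^*n(y)}},
\]
which is continuous on $\mathbf T\times\mathrm{dom}(n)$ because $P(n^*a),n^*n\in B=C_0(X)$ and $n^*n>0$ on $\mathrm{dom}(n)$. Since the trivializations are compatible (again by the preceding lemma), continuity on each patch gives global continuity.

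For (iii), linearity of $a\mapsto\hat a$ is immediate from linearity of $P$. For injectivity, suppose $\hat a\equiv 0$. Then $P(n^*a)(y)=0$ for every $y$ with $n^*n(y)>0$. By the Cauchy--Schwarz inequality for conditional expectations, $|P(n^*a)(y)|^2\le P(a^*a)(y)\,n^*n(y)$, so $P(n^*a)$ vanishes on the zero set of $n^*n$ as well; hence $P(n^*a)=0$ for every $n\in N(B)$. By regularity, $N(B)$ generates $A$ as a C$^*$-algebra, so in particular the linear span of $N(B)$ is norm-dense in $A$; continuity of $P$ then yields $P(b^*a)=0$ for all $b\in A$, in particular $P(a^*a)=0$, whence $a=0$ by faithfulness of $P$.

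The main obstacle is the well-definedness in (i): one must carefully exploit that $P$ is $B$-linear on both sides and that $b,b'$ are positive at $y$ to convert the relation $nb=n'b'$ into compatible scalar identities for numerator and denominator. Everything else is a direct use of the preceding results, with the injectivity argument being essentially a reformulation of why regularity and faithfulness together detect elements of $A$.
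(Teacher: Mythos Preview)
Your proof is correct and follows essentially the same route as the paper for parts (i) and (ii). For part (iii) there is one small but noteworthy difference: to pass from $P(n^*a)(y)=0$ on $\mathrm{dom}(n)$ to $P(n^*a)=0$ on all of $X$, the paper argues by continuity on the closure of $\mathrm{dom}(n)$ and then, for $y$ outside that closure, picks $b\in B$ with $b(y)=1$ and $nb=0$ to force $P(n^*a)(y)=b^*(y)P(n^*a)(y)=P(b^*n^*a)(y)=0$. Your use of the pointwise Cauchy--Schwarz inequality $|P(n^*a)(y)|^2\le n^*n(y)\,P(a^*a)(y)$ (valid because $y\circ P$ is a positive semidefinite sesquilinear form on $A$) handles the complement of $\mathrm{dom}(n)$ in one stroke and is arguably cleaner. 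Both arguments then conclude identically via density of $\mathrm{span}\,N(B)$ and faithfulness of $P$.
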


\begin{proof}
Assertion $(i)$ is clear, since $\hat a(x,nb,y)=\hat a(x,n,y)$ for all $b\in B$ such
that $b(y)>0$. For $(ii)$, the equality $\hat a(x,tn,y)=\overline t\hat a(x,n,y)$
for all $t\in{\bf T}$ shows that $\hat a$ defines a section of $L(B)$. To get the
continuity, it suffices to check the continuity of $\hat a$ on the open subsets
$\Sigma(B)_{|S}$, where $S$ is the open support of $n\in N(B)$. But this is exactly
the continuity of the function $y\mapsto P(n^*a)(y)/\sqrt{n^*n(y)}$ on $dom(n)$. The
linearity in $(iii)$ is clear. Let us assume that $\hat a=0$. Let $n\in N(B)$. Then
$P(n^*a)(y)=0$ for all $y\in dom(n)$, hence also in its closure. If $y$ does not
belong to the closure of $dom(n)$, we can find $b\in B$ such that $b(y)=1$ and
$nb=0$. Then $P(n^*a)(y)=P(b^*n^*a)(y)=0$. Therefore $P(n^*a)=0$ for all $n\in
N(B)$. By regularity of $B$, this implies $P(a^*a)=0$. By faithfulness of $P$, this
implies that $a=0$.
\end{proof}

\begin{defn}\label{evaluation}  The map $\Psi:a\mapsto\hat a$ from $A$ to the space of continuous sections of $L(B)$ will be called the {\it evaluation map} of the Cartan pair $(A,B)$.
\end{defn}

\begin{lem}\label{separation}  Let $(A,B)$ be a Cartan pair. For $n\in N(B)$ and
$x\in dom(n)$ such that the germ of $\alpha_n$ at $x$ is not trivial, we have
$P(n)(x)=0$.
\end{lem}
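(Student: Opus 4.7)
The plan is to derive, from the normalizer relations and the $B$-bimodularity of $P$, an intertwining identity forcing $P(n)$ to vanish wherever $\alpha_n$ actually moves a point, and then pass to the closure by continuity for points where $\alpha_n$ happens to fix $x$ but the germ is still non-trivial. As a first free observation, the Kadison--Schwarz inequality for $P$ gives $|P(n)(y)|^2 \le P(n^*n)(y) = (n^*n)(y)$ and symmetrically $|P(n)(y)|^2 \le (nn^*)(y)$, so $P(n)$ is automatically supported in $\operatorname{dom}(n)\cap\operatorname{ran}(n)$; in particular, nothing needs to be shown at $y\notin\operatorname{ran}(n)$.

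The heart of the argument will be the algebraic identity
\[
bn = n(b\circ\alpha_n), \qquad b\in B,\ \operatorname{supp}(b)\subset\operatorname{ran}(n),
\]
where $b\circ\alpha_n$ denotes $b(\alpha_n(\cdot))$ on $\operatorname{dom}(n)$ extended by zero. This extension does lie in $B$: since $\alpha_n$ is a homeomorphism between the open sets $\operatorname{dom}(n)$ and $\operatorname{ran}(n)$, any sequence in $\operatorname{dom}(n)$ approaching the boundary is sent to a sequence leaving every compact of $\operatorname{ran}(n)$, on which $b$ vanishes. I will prove the identity by setting $c=bn-n(b\circ\alpha_n)$ and computing $cc^*$ using the two normalizer relations
\[
n^*an = (a\circ\alpha_n)\,n^*n, \qquad nan^* = (a\circ\alpha_n^{-1})\,nn^*, \qquad a\in B;
\]
the four expanded terms each collapse to $\pm|b|^2\,nn^*$ and sum to zero, giving $cc^*=0$ and hence $c=0$ in $A$.

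Applying the $B$-bimodular $P$ to the identity yields $b\,P(n)=P(n)\,(b\circ\alpha_n)$, which evaluated at $y\in\operatorname{dom}(n)$ reads $(b(y)-b(\alpha_n(y)))\,P(n)(y)=0$. When $y\in\operatorname{dom}(n)\cap\operatorname{ran}(n)$ with $\alpha_n(y)\ne y$, I choose $b\in C_c(\operatorname{ran}(n))$ with $b(y)=1$ and $b(\alpha_n(y))=0$ to force $P(n)(y)=0$; together with the preliminary support observation this covers every $y\in\operatorname{dom}(n)$ with $\alpha_n(y)\ne y$. For the given $x$, the non-triviality of the germ of $\alpha_n$ at $x$ supplies a sequence $y_k\to x$ in $\operatorname{dom}(n)$ with $\alpha_n(y_k)\ne y_k$, so $P(n)(y_k)=0$ and continuity of $P(n)\in C_0(X)$ forces $P(n)(x)=0$.

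The main obstacle is establishing the identity $bn=n(b\circ\alpha_n)$: both verifying that $b\circ\alpha_n$ defines an element of $B$ and executing the cancellation in $cc^*$ require careful bookkeeping with the two normalizer relations and with the supports of $n^*n$ and $nn^*$; once this identity is in hand, the rest is a matter of separation and continuity.
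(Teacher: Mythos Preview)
Your proof is correct and follows essentially the same route as the paper's: establish an intertwining relation between $n$ and elements of $B$ via $\alpha_n$, apply the $B$-bimodularity of $P$ to deduce $P(n)(y)=0$ whenever $\alpha_n(y)\ne y$, and then pass to the limit along a sequence witnessing the non-triviality of the germ. The only cosmetic difference is that the paper uses the variant $(nn^*)b\,n = n\,(b\circ\alpha_n)(n^*n)$ (which sidesteps checking that $b\circ\alpha_n$ extends to an element of $B$), whereas you prove the cleaner identity $bn = n(b\circ\alpha_n)$ directly via $cc^*=0$; your Kadison--Schwarz remark handling $y\notin\operatorname{ran}(n)$ is a small bonus not made explicit in the original.
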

\begin{proof} Since the germ of $\alpha_n$ at $x$ is not trivial, there exists a
sequence $(x_i)$ in $dom(n)$ which converges to $x$ and such that
$\alpha_n(x_i)\not=x_i$. We fix $i$. There exist  $b',b''\in B$ such that
$b'(x_i)=1$,
$b''(x_i)=0$ and
$b''n=nb'$. Indeed, there exists $b\in B$ with compact support contained in $ran(n)$
such that $b(\alpha_n(x_i))(n^*n)(x)=1$ and $b(x_i)=0$. Then
$b'=(b\circ\alpha_n)(n^*n)$ and $b''=(nn^*)b$ satisfy the conditions. We have
$$P(n)(x_i)=P(n)(x_i)b'(x_i)=P(nb')(x_i)=P(b''n)(x_i)=b''(x_i)P(n)(x_i)=0.$$
By continuity of $P(n)$, $P(n)(x)=0$. 
\end{proof}

\begin{cor}\label{support} Let $a\mapsto\hat a$ be the evaluation map of the Cartan pair $(A,B)$.
\begin{enumerate}
\item Suppose that $b$ belongs to $B$; then $\hat b$ vanishes off $X$ and its
restriction to
$X$ is its Gelfand transform.
\item Suppose that $n$ belongs to $N(B)$; then the open support of $\hat
n$ is the open bisection of $G(B)$ defined by the partial homeomorphism $\alpha_n$.
\end{enumerate}
\end{cor}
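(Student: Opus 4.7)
The plan is to use \lemref{separation} as the central tool: it converts non-triviality of the germ of $\alpha_n$ at $y$ into vanishing of $P(n)(y)$, and through the formula $\hat a(x,n,y)=P(n^*a)(y)/\sqrt{n^*n(y)}$ this immediately reads off the open support of $\hat a$ from the canonical action.

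For (i), since $P$ is a $*$-preserving $B$-bimodule map, for $b\in B$ one has $P(n^*b)=P(n^*)b=\overline{P(n)}\,b$, so
\[
\hat b(x,n,y)=\frac{\overline{P(n)(y)}\,b(y)}{\sqrt{n^*n(y)}}.
\]
If $[x,n,y]\notin\mathcal B$, then by \propref{twist} the germ of $\alpha_n$ at $y$ is non-trivial, so \lemref{separation} gives $P(n)(y)=0$ and hence $\hat b(x,n,y)=0$. Above $X$, every point of $\mathcal B$ is of the form $[x,b',x]$ with $b'(x)>0$ (corresponding to $(1,x)\in{\bf T}\times X$); substitution yields $\hat b(x,b',x)=\overline{P(b')(x)}b(x)/\sqrt{{b'}^*b'(x)}=b(x)$, which is the Gelfand transform of $b$ at $x$.

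For (ii), let $n\in N(B)$ and $[x,m,y]\in\Sigma(B)$. Because $(x,m,y),(x,n,y)\in D$ force $\alpha_m(y)=\alpha_n(y)=x$, the normalizer $m^*n$ satisfies $y\in dom(m^*n)$, and $\hat n(x,m,y)=P(m^*n)(y)/\sqrt{m^*m(y)}$. If the germs of $\alpha_m$ and $\alpha_n$ at $y$ differ, then the germ of $\alpha_{m^*n}=\alpha_m^{-1}\circ\alpha_n$ at $y$ is non-trivial, and \lemref{separation} yields $\hat n(x,m,y)=0$. If the germs agree, then $\alpha_{m^*n}$ has trivial germ at $y$, and \propref{twist} places $[y,m^*n,y]$ in $\mathcal B$: there exist $b,b'\in B$ with $b(y),b'(y)>0$ and $b_0\in B$ with $b_0(y)\neq 0$ such that $m^*nb=b_0 b'$; applying $P$ and evaluating at $y$ gives $P(m^*n)(y)b(y)=b_0(y)b'(y)\neq 0$, so $\hat n(x,m,y)\neq 0$. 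Hence the open support of $\hat n$ projects in $G(B)$ to the bisection $\{[\alpha_n(y),\alpha_n,y]:y\in dom(n)\}$, which is the open bisection defined by $\alpha_n$.

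The main obstacle is the converse half of (ii): we need the positive assertion that $P(m^*n)(y)\neq 0$ once the germs agree. This is where the \emph{masa} hypothesis becomes essential, via \propref{twist}, and where one must unpack the equivalence relation defining $\Sigma(B)$ to extract the witness equation $m^*nb=b_0 b'$ with $b_0(y)\neq 0$. All other steps are direct applications of \lemref{separation} together with the bimodule/$*$-compatibility properties of the conditional expectation $P$.
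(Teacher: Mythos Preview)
Your argument for (i) is essentially the paper's. For (ii), two remarks.

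First, in the vanishing direction, your clause ``$(x,m,y),(x,n,y)\in D$ force $\alpha_m(y)=\alpha_n(y)=x$, the normalizer $m^*n$ satisfies $y\in dom(m^*n)$'' tacitly assumes $(x,n,y)\in D$, which is not given; only $(x,m,y)\in D$ comes with $[x,m,y]\in\Sigma(B)$. When $y\notin dom(m^*n)$ --- for instance when $y\notin dom(n)$ --- \lemref{separation} does not apply, so your dichotomy ``germs differ / germs agree'' does not cover all points off $S_n$. This gap is closed by the Schwarz inequality for the conditional expectation, $|P(m^*n)|^2\le P\bigl((m^*n)^*(m^*n)\bigr)=(m^*n)^*(m^*n)$, which vanishes at any $y\notin dom(m^*n)$. (The paper is equally terse here, simply asserting that ``the lemma shows'' the vanishing.)

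Second, the non-vanishing direction --- which you flag as the ``main obstacle'' --- is in fact immediate in the paper. Any point of the bisection $S_n$ in $G(B)$ lifts to the representative $(\alpha_n(y),n,y)$ with $m=n$, and one computes directly
\[
\hat n(\alpha_n(y),n,y)=\frac{P(n^*n)(y)}{\sqrt{n^*n(y)}}=\sqrt{n^*n(y)}>0.
\]
Your route through \propref{twist} and the witness equation $m^*nb=b_0b'$ is correct but unnecessary; in particular the \emph{masa} hypothesis is not needed for this half.
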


\begin{proof} Let us show $(i)$. If $\gamma=[\alpha_n(x),\alpha_n,x]\in G(B)$ is not
a unit, the germ of $\alpha_n$ at $x$ is not trivial. According to the lemma, for all
$b\in B$,
$P(n^*b)(x)=P(n)(x)b(x)=0$. Therefore, $\hat b(\gamma)=0$. On the other hand, if
$\gamma=x$ is a unit, $\hat b(x)=P({b_1}^*b)(x)={b_1}^*(x)b(x)=b(x)$ for $b_1\in B$
such that $b_(x)=1$. Let us show $(ii)$. If $n\in N(B)$, the lemma shows that $\hat
n[x,m,y]=0$ unless $y\in dom(n)$ and $\alpha_m$ has the same germ as $\alpha_n$ at
$y$. Then $[x,\alpha_m,y]=[x,\alpha_n,y]$ belongs to the open bisection $S_n$ of
$G(B)$ defined by the partial homeomorphism $\alpha_n$. On the other hand,
$\hat n(x,n,y)=n^*n(y)/\sqrt{n^*n(y)}$ is non zero for $y\in dom(n)$.
\end{proof}

\begin{prop} The Weyl groupoid $G(B)$ of a Cartan pair $(A,B)$ is a Hausdorff
\'etale groupoid. 
\end{prop}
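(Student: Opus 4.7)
Since $B$ is abelian, $X=\hat B$ is locally compact Hausdorff, and $G(B)$ is equipped with the germ topology, whose basic open sets are
$$\mathcal{U}(V,\alpha_n,W)=\{[x',\alpha_n,y']:x'\in V,\ y'\in W\},$$
indexed by $n\in N(B)$ and open sets $V,W\subset X$. On each such set $s$ restricts to a homeomorphism onto $W\cap\alpha_n^{-1}(V)$ (and similarly for $r$), so $s$ and $r$ are local homeomorphisms and $G(B)$ is \'etale. That part is immediate from the construction; the substantive task is Hausdorffness.

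For Hausdorffness, let $\gamma_1\neq\gamma_2$ in $G(B)$. If either $s(\gamma_1)\neq s(\gamma_2)$ or $r(\gamma_1)\neq r(\gamma_2)$, the Hausdorffness of $X$ together with the continuity of $s$ and $r$ yields disjoint open neighborhoods. So assume $r(\gamma_1)=r(\gamma_2)=x$ and $s(\gamma_1)=s(\gamma_2)=y$, and write $\gamma_i=[x,\alpha_{n_i},y]$ with $n_i\in N(B)$; by hypothesis the germs of $\alpha_{n_1}$ and $\alpha_{n_2}$ at $y$ differ. The evaluation map provides continuous sections $\hat n_1,\hat n_2$ of the Hermitian line bundle $L(B)$, and by \corref{support} the open support of $\hat n_i$ equals the bisection $S_{n_i}\subset G(B)$ determined by $\alpha_{n_i}$. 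Hence $\gamma_i\in S_{n_i}$, while $\gamma_j\notin S_{n_i}$ for $j\neq i$ (since that would force the germs of $\alpha_{n_i}$ and $\alpha_{n_j}$ at $y$ to coincide). Consequently $\|\hat n_i(\gamma_i)\|^2>0=\|\hat n_i(\gamma_j)\|^2$, and the continuous real-valued function $f=\|\hat n_1\|^2-\|\hat n_2\|^2$ on $G(B)$ satisfies $f(\gamma_1)>0>f(\gamma_2)$. Then $\{f>0\}$ and $\{f<0\}$ are disjoint open neighborhoods of $\gamma_1$ and $\gamma_2$.

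The expected main obstacle is Hausdorffness, since groupoids of germs of arbitrary pseudogroups are famously often non-Hausdorff (the paper itself flags this). What rescues us in the Cartan setting is precisely the evaluation map $\Psi:A\to\Gamma(L(B))$: it supplies a rich enough family of continuous sections of $L(B)$ whose open supports are exactly the Weyl bisections $S_n$, and this is what separates germs that coincide as points but differ as germs.
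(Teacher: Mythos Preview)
Your proof is correct and follows essentially the same approach as the paper: both use the evaluation map $\Psi$ to produce continuous sections whose supports separate distinct germs with the same source, with the key input being \corref{support} (equivalently \lemref{separation}). The only cosmetic difference is that the paper picks a single $a\in A$ with $\hat a(\sigma)=1$ and $\hat a(\sigma')=0$, builds the separating open sets in $\Sigma(B)$ via $|\hat a|$, and projects down to $G(B)$, whereas you work directly on $G(B)$ with the real-valued function $\|\hat n_1\|^2-\|\hat n_2\|^2$; both rely on the same continuity of $\hat n$ established in \lemref{evaluation}.
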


\begin{proof} Let us show that the continuous functions $\hat a$, where $a\in A$
separate the points of $G(B)$ in the sense that for all $\sigma,\sigma'\in \Sigma$
such that $\dot\sigma\not=\dot\sigma'$, there exists $a\in A$ such that $\hat
a(\sigma)\not=0$ and $\hat a(\sigma')=0$. By construction, $\sigma=[x,n,y],
\sigma'=[x',n',y']$ where $n,n'\in N(B)$ and $y\in dom(n),y'\in dom(n')$. If
$y\not=y'$, we can take $a$ of the form $nb$ where $b(y)\not=0$ and
$b(y')=0$. If $y=y'$, since $\alpha_n$ and $\alpha_{n'}$ do not have the same germ
at $y$, we have by \lemref{separation} that $P({n'}^*n)(y)=0$, which implies $\hat
n(\sigma')=0$. On the other hand, $\hat n(\sigma)=\sqrt{n^*n(y)}$ is non-zero. We
can furthermore assume that $\hat
a(\sigma)=1$. Let $U=\{\tau: |\hat a(\tau)-1|<1/2\}$ and $V=\{\tau: |\hat
a(\tau)|<1/2\}$. Their images $\dot U,\dot V$ in $G(B)$ are open, disjoint and
$\dot\sigma\in\dot U,{\dot\sigma}'\in\dot V$.
\end{proof}

\begin{lem} Let $(A,B)$ be a Cartan pair. Let $N_c(B)$ be the set of elements $n$ in
$N(B)$ such $\hat n$ has compact support and let $A_c$ be its linear span. Then
\begin{enumerate}
\item $N_c(B)$ is dense in $N(B)$ and $A_c$ is dense in $A$;
\item the evaluation map $\Psi:a\mapsto\hat a$ defined above sends bijectively $A_c$ onto
$C_c(G(B),\Sigma(B))$ and $B_c=B\cap A_c$ onto $C_c(G^{(0)})$;
\item the evaluation map $\Psi:A_c\rightarrow C_c(G(B),\Sigma(B))$ is a $*$-algebra isomorphism. 
\end{enumerate}
\end{lem}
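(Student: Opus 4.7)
The plan is to handle the three parts in order.

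For part (i), given $n\in N(B)$, I would pick a self-adjoint approximate unit $(e_\lambda)$ for $A$ lying in $B$ with each $e_\lambda$ of compact support (which exists since $B$ contains an approximate unit of $A$ by hypothesis). Then $ne_\lambda\in N(B)$ converges to $n$ in norm, and by \corref{support} the open support of $\widehat{ne_\lambda}$ is the open bisection of $G(B)$ corresponding to $\alpha_n$ restricted to $\mathrm{supp}'(e_\lambda)\cap\mathrm{dom}(n)$, which is relatively compact in $G(B)$ because $s$ is a homeomorphism on bisections. Hence $ne_\lambda\in N_c(B)$, and regularity of $B$ then implies that $A_c$ is dense in $A$.

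For part (ii), injectivity of $\Psi$ is given by \lemref{evaluation}. For surjectivity, given $f\in C_c(G(B),\Sigma(B))$ with compact support $K$, I would cover each $\gamma\in K$ by an open bisection $T_\gamma$ on which $L(B)$ is trivializable (using the Douady--Soglio-H\'erault theorem, as in the preceding proposition) and on which some $n_\gamma\in N(B)$ satisfies $\widehat{n_\gamma}\neq 0$ everywhere. Extracting a finite subcover $T_1,\dots,T_k$ and a subordinate partition of unity $\phi_i\in C_c(T_i)$, I would define $h_i\in C_c(s(T_i))$ on each $T_i$ by $h_i(s(\sigma))=\phi_i(\sigma)f(\sigma)/\widehat{n_i}(\sigma)$, check that $\phi_if=\widehat{n_ih_i}$, and conclude $f=\Psi(\sum_i n_ih_i)$. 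The statement for $B_c$ is the special case in which each $T_i$ lies in $G^{(0)}$, combined with the description of $\Psi|_B$ as the Gelfand transform from \corref{support}.

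For part (iii), by linearity it suffices to prove $\widehat{mn}=\hat m*\hat n$ and $\widehat{m^*}=\hat m^*$ for $m,n\in N_c(B)$. Since both $\widehat m$ and $\widehat n$ are supported on bisections, the convolution $(\hat m*\hat n)(\sigma)$ on the \'etale groupoid $G(B)$ reduces to a single-term sum, and the identity then amounts, via the evaluation formula of \lemref{evaluation}, to the algebraic relation
\[
(mn)^*(mn)(y)=(m^*m)(\alpha_n(y))\cdot n^*n(y),
\]
which follows from the defining property $n^*bn=(b\circ\alpha_n)n^*n$ of $\alpha_n$ applied with $b=m^*m$. The involution formula reduces analogously to $mm^*(\alpha_m(y))=m^*m(y)$, obtainable from polar decomposition of $m$ in the enveloping von Neumann algebra of $A$.

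The main obstacle I expect is the surjectivity in (ii): one must supply, at each point of $G(B)$, an element of $N(B)$ whose image under $\Psi$ is non-vanishing on a trivializing open bisection neighborhood. This is where the earlier preparation --- the Douady--Soglio-H\'erault local trivialization of $L(B)$ together with the identification of $\mathcal G(B)$ as the pseudogroup of those $\alpha_S$ for which $L(B)$ trivializes over $S$ --- becomes essential.
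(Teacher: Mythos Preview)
Your proposal is correct and follows essentially the same approach as the paper: approximate normalizers by products with compactly supported elements of $B$, cover the support of an $f\in C_c(G(B),\Sigma(B))$ by bisections carrying a non-vanishing $\hat n$, and verify multiplicativity via the identity $(mn)^*(mn)(z)=(m^*m)(\alpha_n(z))\,n^*n(z)$. The only unnecessary detour is your appeal to Douady--Soglio-H\'erault in part~(ii): since $G(B)$ is by definition the groupoid of germs of the Weyl pseudogroup, every $\gamma$ is of the form $[x,\alpha_n,y]$ for some $n\in N(B)$, and \corref{support} already shows $\hat n$ is non-vanishing on the bisection $S_n\ni\gamma$, so the family $\{S_n\}_{n\in N(B)}$ furnishes the cover directly.
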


\begin{proof} For $(i)$, given $n\in N(B)$, there exists $b\in B$ such that $nb=n$.
There exists a sequence $(b_i)$ in $B$ such that $\hat b_i\in C_c(G^{(0)})$ and 
$(b_i)$ converges to $b$. Then $nb_i$ belongs to $N_c(B)$ and the sequence $(nb_i)$
converges to $n$. Note that $N_c(B)$ is closed under product and
involution and that $A_c$ is a dense
sub-$*$-algebra of $A$. Let us prove $(ii)$. By construction, $\Phi(A_c)$ is
contained in $C_c(G,\Sigma)$. The injectivity of $\Phi$ has been established in
\lemref{evaluation}. Let us show that $\Phi(A_c)=C_c(G,\Sigma)$. The family
of open bisections  $S_n=\{[\alpha_n(x),\alpha_n,x], x\in dom(n)\}$, where $n$ runs
over $N(B)$, forms an open cover of $G(B)$. If $f\in C_c(G,\Sigma)$ has its support
contained in $S_n$, then $\hat n$ is a non-vanishing continuous section over $S_n$
and there exists
$h\in C_c(G^{(0)})$ such that $f=\hat n h$. Since $h=\hat b$ with $b\in B_c$,
$f=\hat a$, where $a=nb$ belongs to $N_c(B)$. For a general $f\in C_c(G,\Sigma)$, we
use a partition of unity subordinate to a finite open cover $S_{n_1},\ldots,
S_{n_l}$ of the support of $f$. Let us prove $(iii)$. By linearity of $\Psi$, it suffices to check the relations $\Psi(mn)=\Psi(m)*\Psi(n)$ and $\Psi(n^*)=\Psi(n)^*$ for $m,n\in N(B)$. According to \corref{support}, $\Psi(mn)(\sigma)=0$ unless $\sigma=t[x,mn,z]$ with $z\in dom(mn)$ and $t\in{\bf T}$; then we  have
$$\Psi(mn)(t[x,mn,z])={\overline t}\sqrt {((mn)^*mn)(z)}.$$
On the other hand, $\Psi(m)\Psi(n)(\sigma)=0$ unless $\sigma$ is of the form 
$$\sigma=t[x,m,y][y,n,z]=t[x,mn,z]$$ and then
$$\Psi(m)\Psi(n)(t[x,mn,z])=\Psi(m)(t[x,m,y])\Psi(n)([y,n,z]={\overline t}\sqrt {(m^*m)(y)
(n^*n)(z)}.$$
The equality results from
$$(mn)^*(mn)(z)=(n^*(m^*m)n)(z)=(m^*m)(\alpha_n(z))n^*n(z)=m^*m(y)n^*n(z).$$
Similarly, $\Psi(n^*)(\sigma)=0$ unless $\sigma={\overline t}[y,n^*,x]$ with $x\in dom(n^*)$ and $t\in{\bf T}$ and then we  have
$$\Psi(n^*)({\overline t}[y,n^*,x])=t\sqrt {(nn^*)(x)}.$$
On the other hand, $\Psi(n)^*(\sigma)=\overline{\Psi(n)(\sigma^{-1})}=0$ unless $\sigma^{-1}=t[x,n,y]$ with $y\in dom(n)$ and $t\in{\bf T}$ and then we  have
$$\Psi(n)^*({\overline t}[y,n^*,x])=t\sqrt {(n^*n)(y)}.$$
These numbers are equal because $nn^*(x)=n^*n(y)$.

\end{proof}

\begin{thm}\label{Cartan} Let $(A,B)$ be a Cartan pair. Then the evaluation map $\Phi: a\mapsto\hat a$  is a C$^*$-algebra isomorphism of $A$ onto
$C_r^*(G(B),\Sigma(B))$ which carries $B$ onto $C_0(G^{(0)})$.
\end{thm}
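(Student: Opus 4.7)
The previous lemma supplies a $*$-algebra isomorphism $\Psi: A_c \xrightarrow{\sim} C_c(G(B), \Sigma(B))$ sending $B_c$ onto $C_c(G^{(0)})$, and both $A_c \subset A$ and $C_c(G(B), \Sigma(B)) \subset C^*_r(G(B), \Sigma(B))$ are dense. It therefore suffices to show $\Psi$ is isometric for the two C$^*$-norms; the claimed isomorphism then follows by continuous extension, automatically carrying $B$ onto $C_0(G^{(0)})$.

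The strategy is to express both norms as suprema over $x \in X = \widehat{B}$ of regular-type representations and intertwine them. On the groupoid side, $\|f\|_{\mathrm{red}} = \sup_x \|\pi_x(f)\|$ with $\pi_x$ the regular representation on $H_x = L^2(G(B)_x, L(B)_x, \lambda_x)$. On the Cartan side, for each $x$ form the Hilbert space $H_x^A$ by quotienting $A$ by the null space of the positive semidefinite form $\langle a, a' \rangle_x = P(a^*a')(x)$ and completing; left multiplication gives a $*$-representation $\pi_x^A : A \to \mathcal{B}(H_x^A)$, with $\|\pi_x^A(a)\| \le \|a\|$ automatic. I would then build a unitary $U_x : H_x^A \to H_x$ by $U_x([n]_x) = \hat n |_{G(B)_x}$ for $n \in N(B)$ with $x \in \mathrm{dom}(n)$; by \corref{support} this is a section supported at the single point $Sx \in G(B)_x$ with $S = \mathrm{supp}'(\hat n)$, and by \lemref{formula} the identity
$$\langle [n_1]_x, [n_2]_x \rangle_{H_x^A} = P(n_1^*n_2)(x) = \langle \hat{n_1}|_{G(B)_x}, \hat{n_2}|_{G(B)_x}\rangle_{H_x}$$
holds for $n_1, n_2 \in N(B)$ with $x \in \mathrm{dom}(n_1) \cap \mathrm{dom}(n_2)$, so $U_x$ is a well-defined isometry; it is surjective since every point of $G(B)_x$ arises as $Sx$ for some open bisection $S$. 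Multiplicativity of $\Psi$ on $A_c$ then yields the intertwining $U_x \pi_x^A(a) = \pi_x(\hat a) U_x$, whence $\|\pi_x^A(a)\| = \|\pi_x(\hat a)\|$ and, taking the supremum, $\sup_x \|\pi_x^A(a)\| = \|\hat a\|_{\mathrm{red}}$.

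It remains to identify $\sup_x \|\pi_x^A(a)\|$ with $\|a\|$. Assemble the forms $\langle\cdot,\cdot\rangle_x$ into the $B$-valued inner product $\langle a, a'\rangle = P(a^*a')$, complete to a Hilbert $B$-module $E$, and let $\pi^A : A \to \mathcal{L}(E)$ be left multiplication; commutativity of $B = C_0(X)$ gives $\|\pi^A(a)\|_{\mathcal{L}(E)} = \sup_x \|\pi_x^A(a)\|$. Faithfulness of $\pi^A$ follows from that of $P$: if $(e_\lambda) \subset B$ is an approximate unit of $A$, then using the $B$-bimodule property of the conditional expectation,
$$\|\pi^A(a) e_\lambda\|_E^2 = \|P(e_\lambda a^* a e_\lambda)\| = \|e_\lambda P(a^*a) e_\lambda\| \longrightarrow \|P(a^*a)\|,$$
so $\pi^A(a) = 0$ forces $P(a^*a) = 0$, hence $a = 0$. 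A faithful $*$-homomorphism of C$^*$-algebras is isometric, so $\|a\| = \|\pi^A(a)\|_{\mathcal{L}(E)} = \sup_x \|\pi_x^A(a)\| = \|\hat a\|_{\mathrm{red}}$, as required. The principal technical obstacle is the careful verification that $U_x$ is well defined and preserves the inner product for arbitrary $n_1, n_2 \in N(B)$, which requires case analysis on whether the germs of $\alpha_{n_1}$ and $\alpha_{n_2}$ at $x$ coincide (invoking \lemref{separation} to handle vanishing on the side where they disagree); once this local computation is settled, everything else is standard C$^*$-module bookkeeping combined with the algebraic isomorphism of the preceding lemma.
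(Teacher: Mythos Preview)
Your approach is correct and rests on the same underlying mechanism as the paper's: both arguments identify $\|a\|$ with the operator norm of left multiplication on the Hilbert $B$-module completion of $A$ under $\langle a,a'\rangle=P(a^*a')$ (using faithfulness of $P$), identify $\|\hat a\|_{\mathrm{red}}$ with the analogous norm on the groupoid side, and then match the two via the compatibility of $\Psi$ with the expectations.

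The paper's execution, however, is considerably more compressed. Rather than building fiberwise unitaries $U_x$ and verifying the inner-product identity pointwise (which, as you note, forces a case split on whether the germs of $\alpha_{n_1}$ and $\alpha_{n_2}$ agree at $x$, with \lemref{separation} handling the off-diagonal case), the paper simply writes
\[
\|a\|=\sup\{\|P(c^*a^*ac)\|^{1/2}: c\in A_c,\ P(c^*c)\le 1\},\qquad
\|\hat a\|=\sup\{\|\hat P(\hat c^*\hat a^*\hat a\,\hat c)\|^{1/2}: c\in A_c,\ \hat P(\hat c^*\hat c)\le 1\},
\]
and observes that the single relation $\hat P\circ\Psi=\Psi\circ P$ (immediate from the definition of $\hat a$) together with the $*$-algebra isomorphism $\Psi:A_c\to C_c(G(B),\Sigma(B))$ makes the two suprema identical. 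Your fiberwise construction recovers exactly this relation after localization, so nothing is lost; but the paper's formulation bypasses the $U_x$ bookkeeping and the germ case analysis entirely. In short: same idea, but the global Hilbert-module phrasing is a shortcut worth knowing.
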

\begin{proof} Let us show
that the evaluation map
$\Psi:A_c\rightarrow C_c(G,\Sigma)$ is an isometry with respect to the norms of $A$
and $C_r^*(G,\Sigma)$. Since
$P$ is faithful, we have for any $a\in A$ the equality
$$\|a\|=\sup\{\|P(c^*a^*ac)\|^{1/2}: c\in A_c, P(c^*c)\le 1\}.$$
If we assume that $a$ belongs to $A_c$, then $\hat a$ belongs to $C_r^*(G,\Sigma)$
and satisfies a similar formula:
$$\begin{array}{lll}
\|\hat a\|&=\sup\{\|\hat P(f^*(\hat a)^*\hat a f)\|^{1/2}: f\in C_c(G,\Sigma),
\hat P(f^*f)\le 1\}\\
&=\sup\{\|\hat P((\hat c)^*(\hat a)^*\hat a \hat c)\|^{1/2}: c\in A_c,
\hat P((\hat c)^*\hat c)\le 1\}.
\end{array}$$
Since $\Psi:A_c\rightarrow C_c(G,\Sigma)$ satisfies the
relation
$\hat P\circ\Psi=\Psi\circ P$, where $\hat P$ is the restriction map to $\hat B$, we
have the equality of the norms: $\|\hat c\|=\|c\|$. Hence
$\Psi$ extends to a C$^*$-algebra isomorphism $\tilde\Psi:A\rightarrow
C_r^*(G,\Sigma)$. By continuity of point evaluation, $\tilde\Psi(a)=\Psi(a)$ as
defined initially.
\end{proof}

We have mentioned earlier that the unique extension property of $B$ implies the uniqueness of the conditional expectation onto $B$. The uniqueness still holds for Cartan subalgebras.

\begin{cor} Let $B$ be a Cartan subalgebra of a C$^*$-algebra $A$. Then, there exists a unique expectation onto $B$.
\end{cor}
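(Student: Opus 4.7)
The plan is to reduce the statement immediately to \propref{conditional expectation}(iii) by invoking the main structure theorem just proved. First I would apply \thmref{Cartan} to transport the problem to the groupoid picture: the evaluation map $\Psi$ is a $C^*$-algebra isomorphism of $A$ onto $C^*_r(G(B),\Sigma(B))$ carrying $B$ onto $C_0(G^{(0)})$, so uniqueness of the conditional expectation onto $B$ is equivalent to uniqueness of the conditional expectation from $C^*_r(G(B),\Sigma(B))$ onto $C_0(G^{(0)})$.

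Next I would verify that the Weyl groupoid $G(B)$ satisfies the standing assumptions of \propref{conditional expectation}: it is Hausdorff and \'etale by the proposition proved just before \thmref{Cartan} (together with the local triviality of $\Sigma(B)$ established earlier), and, crucially, it is essentially principal. This last point is the only substantive content: since $G(B)$ is by definition the groupoid of germs of the Weyl pseudogroup ${\mathcal G}(B)$, \corref{essentially principal} gives essential principality for free.

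With these hypotheses in place, \propref{conditional expectation}(iii) applies and yields that the restriction map $\hat P \colon C^*_r(G(B),\Sigma(B)) \to C_0(G^{(0)})$ is the unique conditional expectation. Pulling this back through $\Psi$ using the intertwining relation $\hat P \circ \Psi = \Psi \circ P$ (used in the proof of \thmref{Cartan}) gives uniqueness of the conditional expectation $P$ onto $B$.

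I do not expect any genuine obstacle here; the corollary is really a dictionary entry from the model $(C^*_r(G(B),\Sigma(B)), C_0(G^{(0)}))$ back to $(A,B)$, with all the work having been done either in \propref{conditional expectation} (where the essential principality of $G$ was used to rule out alternate expectations via a bisection argument) or in the construction of the Weyl groupoid as a groupoid of germs (which automatically makes it essentially principal).
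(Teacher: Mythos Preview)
Your proposal is correct and follows exactly the paper's own argument: the paper's proof consists of the single sentence ``This results from the above theorem and \propref{conditional expectation},'' and you have simply unpacked that sentence, including the observation (via \corref{essentially principal}) that the Weyl groupoid, being a groupoid of germs, is automatically essentially principal so that part (iii) of \propref{conditional expectation} applies.
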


\begin{proof} This results from the above theorem and \propref{conditional expectation}.
\end{proof}

The following proposition is essentially a reformulation of Kumjian's theorem (see \cite{kum:diagonals} and \cite{ren:dual}). For the sake of completeness, we recall his proof. One says that the subalgebra $B$ has the unique extension property if every pure state of $B$ extends uniquely to a (pure) state of $A$. A C$^*$-diagonal is a Cartan subalgebra which has the unique extension property.

\begin{prop} (cf  \cite{kum:diagonals}, \cite{ren:dual}) Let $(A,B)$ be a Cartan pair. Then $B$ has the unique extension property if and only if the Weyl groupoid $G(B)$ is principal.
\end{prop}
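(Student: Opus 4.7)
The plan is to invoke \thmref{Cartan} to identify $(A,B)$ with $(C^*_{red}(G,\Sigma), C_0(G^{(0)}))$, where $G = G(B)$ is Hausdorff, \'etale, second countable, and essentially principal, and argue both implications directly in this model. Write $\phi_x := \delta_x \circ P$ for the canonical state of $A$ extending the pure state $\delta_x$ of $B$. A preliminary fact used throughout: for any state $\phi$ of $A$ with $\phi|_B = \delta_x$, the identity $\phi(b^*b) = |b(x)|^2 = |\phi(b)|^2$ shows that $B$ lies in the multiplicative domain of $\phi$, so $\phi(b_1 a b_2) = b_1(x)\phi(a)b_2(x)$ for all $b_1,b_2 \in B$ and $a \in A$.

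For the direction \emph{principal $\Rightarrow$ unique extension}, let $\phi$ extend $\delta_x$. By regularity of $B$ and density of $N_c(B)$ in $N(B)$ (hence of its linear span in $A$), it suffices to show $\phi(n) = \phi_x(n)$ for $n \in N_c(B)$ with open support a bisection $S$. I split into three cases. (i) If $x \notin s(\mathrm{supp}\,n)$, then $n^*n \in B$ vanishes at $x$ and Cauchy--Schwarz gives $\phi(n) = 0 = \phi_x(n)$. (ii) If $x \in s(\mathrm{supp}\,n)$ with $\alpha_S(x) \neq x$, then as in the proof of \lemref{separation} one finds $b',b'' \in B$ with $b'(x) = 1$, $b''(x) = 0$ and $nb' = b''n$; the multiplicative-domain identity then gives $\phi(n) = \phi(n)b'(x) = \phi(nb') = \phi(b''n) = b''(x)\phi(n) = 0 = \phi_x(n)$, the last equality by \lemref{separation}. (iii) If $\alpha_S(x) = x$, then principality of $G$ forces $Sx = x$, so $x \in S$ and $V := S \cap G^{(0)}$ is an open neighbourhood of $x$ on which $\alpha_S$ is the identity; choosing $b \in B$ with $b(x) = 1$ and compactly supported in $V$, the product $nb$ lies in $B$ and satisfies $(nb)(x) = n(x) = P(n)(x)$, whence $\phi(n) = \phi(nb) = (nb)(x) = \phi_x(n)$.

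For the converse I argue the contrapositive: supposing $G$ is not principal, I exhibit two distinct state extensions of some $\delta_x$. Pick $\gamma \in G(x) \setminus \{x\}$; since $G^{(0)}$ is closed in the Hausdorff groupoid $G$, choose an open bisection $S \subset G \setminus G^{(0)}$ containing $\gamma$, fix a lift $\tilde\gamma \in \Sigma$ of $\gamma$, and take $n \in N(B)$ compactly supported on $S$ with $n(\tilde\gamma) = 1$ (possible since $L$ is locally trivializable). Then $\phi_x(n) = n(x) = 0$ because $x \notin S$. In the regular representation $\pi_x$ on $H_x = L^2(G_x, L_x, \lambda_x)$, the formula $\pi_x(b)\xi = (b\circ r)\xi$ for $b \in B$ shows that $\omega_\xi(a) := \langle \pi_x(a)\xi, \xi\rangle$ restricts to $\delta_x$ on $B$ for every unit vector $\xi$. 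A direct computation with the convolution formula gives $\pi_x(n)\epsilon_x = \epsilon_{\tilde\gamma}$ and $\pi_x(n)\epsilon_{\tilde\gamma} = \epsilon_{\tilde\gamma^2}$; setting $\xi := (\epsilon_x + \epsilon_{\tilde\gamma})/\sqrt{2}$ yields $\omega_\xi(n) = \tfrac{1}{2}(1 + c\cdot \mathbf{1}_{\gamma^2=x})$, where $c \in \mathbb{T}$ is the twist scalar defined by $\tilde\gamma^2 = c\tilde x$ when $\gamma^2 = x$. This is non-zero except in the degenerate case $c = -1$ and $\gamma^2 = x$, which is handled by replacing $\xi$ with $(\epsilon_x + i\epsilon_{\tilde\gamma})/\sqrt{2}$, giving $\omega_\xi(n) = -i \neq 0$. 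In every case $\omega_\xi$ is a state extension of $\delta_x$ distinct from $\phi_x$, so the unique extension property fails.

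The main technical obstacle is the second direction, and in particular the bookkeeping for the twist: one must verify the precise formula $\pi_x(n)\epsilon_{\tilde\gamma} = \epsilon_{\tilde\gamma^2}$ (not merely up to phase) consistent with the convention $f(z\sigma) = \bar z f(\sigma)$, and then identify the scalar $c$ coming from $\tilde\gamma^2 = c\tilde x$ when $\gamma^2 = x$. The possibility $c = -1$ prevents a single universal choice of $\xi$, but the two candidates $(\epsilon_x + \epsilon_{\tilde\gamma})/\sqrt{2}$ and $(\epsilon_x + i\epsilon_{\tilde\gamma})/\sqrt{2}$ cannot both annihilate $n$, which is enough.
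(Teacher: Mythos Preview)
Your argument is correct and complete, apart from one misstated sentence: it is not true that $\omega_\xi|_B=\delta_x$ for \emph{every} unit vector $\xi\in H_x$, since $\pi_x(b)\xi=(b\circ r)\xi$ gives $\omega_\xi(b)=\int b\circ r\,|\xi|^2\,d\lambda_x$, which equals $b(x)$ only when $\xi$ is supported on the isotropy $G(x)=r^{-1}(x)\cap G_x$. Your specific vectors $(\epsilon_x+\epsilon_{\tilde\gamma})/\sqrt2$ and $(\epsilon_x+i\epsilon_{\tilde\gamma})/\sqrt2$ are supported on $\{x,\gamma\}\subset G(x)$, so the conclusion is unaffected; just tighten the quantifier.

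Your route is genuinely different from the paper's. For the forward implication, the paper invokes the Archbold--Bunce--Gregson characterization $A=B+\overline{\mathrm{span}}\,[A,B]$ of the unique extension property: it shows that when $G$ is principal the kernel of $P$ is densely spanned by \emph{free} normalizers ($n^2=0$), and that free normalizers are norm limits of commutators with $B$. You instead fix an arbitrary extension $\phi$ of $\delta_x$, use that $B$ lies in its multiplicative domain, and check $\phi(n)=\phi_x(n)$ case by case on $N_c(B)$. Your argument is more self-contained (no external characterization needed); the paper's is more structural and yields the stronger intermediate fact $\ker P\subset\overline{\mathrm{span}}\,[A,B]$. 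For the converse, the paper argues via transition probabilities of pure states (\`a la Shultz): from $\alpha_n(x)=x$ and the unique extension property it deduces $P(n)(x)\neq0$, hence the germ of $\alpha_n$ at $x$ is trivial by \lemref{separation}. You instead build an explicit competing extension of $\delta_x$ as a vector state in the regular representation $\pi_x$, exploiting a non-unit $\gamma\in G(x)$. Your construction is concrete and elementary; the paper's identifies the obstruction conceptually as a transition probability, which may be of independent interest.
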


\begin{proof} We may assume that $(A,B)=(C^*_r(G,\Sigma),C_0(G^{(0)}))$, where $G$ is an \'etale essentially principal Hausdorff  groupoid and $\Sigma$ is a twist over $G$. Suppose that $G$ is principal. A.~Kumjian shows that this implies that the linear span of the set $N_f(B)$ of free normalizers is dense in the kernel of the conditional expectation $P$, where a normalizer $n\in N(B)$ is said to be free if $n^2=0$. Indeed, since an arbitrary element of the kernel can be approximated by elements in  $C_c(G,\Sigma)\cap Ker(P)$, it suffices to consider a continuous section $f$ with compact support which vanishes on $G^{(0)}$. Since the compact support of $f$ does not meet the diagonal $G^{(0)}$, which is both open and closed, it admits a finite cover by open bisections $U_i$ such that $r(U_i)\cap s(U_i)=\emptyset$. Let $(h_i)$ be a partition of unity subordinate to the open cover $(U_i)$. Then, $f=\sum g_i$, where $g_i(\sigma)=f(\sigma)h_i(\dot\sigma)$ is a free normalizer. Then, he observes that free normalizers are limits of commutators $ab-ba$, with $a\in A$ and $b\in B$. This show that $A=B+\overline{span}[A,B]$, which is one of the characterizations of the extension property given in Corollary 2.7 of \cite{abg:extensions}. We suppose now that $B$ has the unique extension property and we show that the isotropy of $G$ is reduced to $G^{(0)}$. It suffices to show that for $n\in N(B)$ and $x\in dom(n)$, the equality $\alpha_n(x)=x$ implies that the germ of $\alpha_n$ at $x$ is trivial. According to \lemref{separation}, it suffices to show that $P(n)(x)\not=0$.  Given $n\in N(B)$ and $x\in dom(n)$, the states $x\circ P$ and $\alpha_n(x)\circ P$  are unitarily equivalent and  their transition probability (\cite{shu:pure}) is ${|P(n)(x)|^2\over n^*n(x)}$. Indeed, let $(H,\xi,\pi)$ be the GNS triple constructed from the state $x\circ P$. By construction,  $x\circ P$ is the state defined by the representation $\pi$ and the vector $\xi$. On the other hand, $\alpha_n(x)\circ P$ is the state of $A$ defined by $\pi$ and the vector $\eta=\pi(u)\xi$, where $u$ is the partial isometry of the polar decomposition $n=u|n|$ of $n$ in $A^{**}$. To show that, one checks the straightforward relation
$b(\alpha_n(x))=(\eta,\pi(b)\eta)$ for $b\in B$ and one uses the unique extension property. The transition probability can be computed by the formula $|(\xi,\eta)|^2={|P(n)(x)|^2\over n^*n(x)}$. If
$\alpha_n(x)=x$, the transition probability is 1. In particular, $P(n)(x)\not=0$. 
\end{proof}

\section{Examples of Cartan subalgebras in C$^*$-algebras}

\subsection{Crossed products by discrete groups}

In his pioneering work \cite{zm:croises} on crossed product C$^*$ and W$^*$-algebras by discrete groups, G.~Zeller-Meier gives the following necessary and sufficient condition (Proposition 4.14)  for $B$ to be maximal abelian in the reduced crossed product $C_r^*(\Gamma;B;\sigma)$, where $\Gamma$ is a discrete group acting by automorphisms on a commutative C$^*$-algebra $B$ and $\sigma$ is a 2-cocycle: the action of $\Gamma$ on $X=\hat B$ must be essentially free (some authors say topologically free), meaning that for all $s\in \Gamma\setminus\{e\}$, the set $X_s=\{x\in X: sx=x\}$ must have an empty interior in $X$. This amounts  to the groupoid $G=\Gamma\lsd X$ of the action being essentially principal. Proposition 2.4.7 of \cite{ren:approach} extends this result. Note that $G$ is principal if and only if the action is free, in the sense that
for all $s\in \Gamma\setminus\{e\}$, the set $X_s=\{x\in X: sx=x\}$ is empty. The particular case of the group $\Gamma={\bf Z}$ is well studied  (see for example \cite{tom:dynamics}) and we consider only this case below.

 Irrational rotations and minimal homeomorphisms of the Cantor space are examples of free actions. The C$^*$-algebras of these dynamical systems are well understood and completely classified. I owe to I.~Putnam the remark that the C$^*$-algebra of a Cantor minimal system contains uncountably many non-conjugate Cartan subalgebras (which are in fact diagonals in the sense of Kumjian). Indeed, according to 
\cite{gps:orbitequivalence}, such a C$^*$-algebra depends only, up to isomorphism, on the strong orbit equivalence class of the dynamical system; however, two minimal Cantor systems which are strongly orbit equivalent need not be flip conjugate (flip conjugacy amounts to groupoid isomorphism). More precisely, in any given strong orbit equivalence class, one can find homeomorphisms of arbitrary entropy. These will give the same C$^*$-algebra but the corresponding Cartan subalgebras will not be conjugate. 

On the other hand, two-sided Bernoulli shifts are examples of essentially free actions which are not free. They provide examples of Cartan subalgebras which do not have the extension property. In \cite{tom:dynamics}, J.~Tomiyama advocates the view that in relation with operator algebras, the notion of essentially free action, rather than that of free action,  is the counterpart for topological dynamical systems of the notion of free action for measurable dynamical systems. The comparison of \thmref{Cartan} and of the Feldman-Moore theorem completely supports this view.

\subsection{AF Cartan subalgebras in AF C$^*$-algebras}

Approximately finite dimensional (AF) C$^*$-algebras have privileged Cartan subalgebras. These are the maximal abelian subalgebras obtained by the diagonalization method of Str\u atil\u a and Voiculescu (\cite{sv:AF}). In that case, the twist is trivial and the whole information is contained in the Weyl groupoid. The groupoids which occur in that fashion are the AF equivalence relations. These are the equivalence relations $R$ on a totally disconnected locally compact Hausdorff space $X$ which are the union of an increasing sequence of proper equivalence relations $(R_n)$. The proper relations $R_n$ are endowed with the topology of $X\times X$ and $R$ is endowed with the inductive limit topology. As shown by Krieger in \cite{kri:dimension}, AF C$^*$-algebras and AF equivalence relations share the same complete invariant, namely the dimension group. One deduces that these privileged Cartan subalgebras, also called AF Cartan subalgebras, are conjugate by an automorphism of the ambient AF algebra. However, AF C$^*$-algebras may contain other Cartan subalgebras. An example of a Cartan subalgebra in an AF C$^*$-algebra without the unique extension property is given in \cite[III.1.17]{ren:approach}. A more striking example is given by B.~Blackadar in \cite{bla:CAR}. He constructs a diagonal in the CAR algebra whose spectrum is not totally disconnected. More precisely, he realizes the CAR algebra as the crossed product $C(X)\rsd\Gamma$ where $X={\bf S}^1\times\hbox{Cantor space}$ and $\Gamma$ is a locally finite group acting freely on $X$. Note that the groupoid $X\rsd\Gamma$ is also an AP equivalence relation, in the sense that it is the union of an increasing sequence of proper equivalence relations $(R_n)$.

\subsection{Cuntz-Krieger algebras and graph algebras}

The Cuntz algebra ${\mathcal O}_d$ is the prototype of a C$^*$-algebra which has a natural Cartan subalgebra without the unique extension property. By definition, ${\mathcal O}_d$ is the C$^*$-algebra generated by $d$ isometries $S_1,\ldots, S_d$ such that $\sum_{i=1}^d S_iS_i^*=1$. The Cartan subalgebra in question is the sub C$^*$-algebra ${\mathcal D}$ generated by the range projections of  the isometries $S_{i_1}\ldots S_{i_n}$. It can be checked directly that ${\mathcal D}$ is a Cartan subalgebra of ${\mathcal O}_d$; however, it is easier to  show first that $({\mathcal O}_d,{\mathcal D})$ is isomorphic to $(C^*(G), C(X))$, where $X=\{1,\ldots,d\}^{\bf N}$ and $G=G(X,T)$ is the groupoid associated to the one-sided shift $T:X\rightarrow X$ (see \cite{ren:approach, dea:groupoids, ren:cuntzlike}):
$$G=\{(x,m-n,y): x,y\in X, m,n\in{\bf N}, T^mx=T^ny\}.$$
This groupoid is not principal but it is essentially principal. In fact, the groupoid $G(X,T)$ associated to the local homeomorphism $T:X\rightarrow X$ is essentially principal if and only if $T$ is essentially free, meaning that for all pairs of distinct integers $(m,n)$,  the set $X_{m,n}=\{x\in X: T^mx=T^nx\}$ must have an empty interior in $X$. 

Condition (I) introduced by Cuntz and Krieger in their fundamental work \cite{ck:markov} ensures that the subalgebra ${\mathcal D}_A$ is a Cartan subalgebra of ${\mathcal O}_A$. Here, $A$ is a $d\times d$ matrix with entries in $\{0,1\}$ and non-zero rows and columns. The associated dynamical system is the one-sided subshift of finite type $(X_A,T_A)$; condition $(I)$ guarantees that this system is essentially free. In subsequent generalizations, in terms of infinite matrices in \cite{el:infinite} and in terms of graphs in \cite{kpr:graphs},  exit condition (L) replaces condition (I). On the topological dynamics side, it is a necessary and sufficient condition for the relevant groupoid to be essentially principal. On the C$^*$-algebraic side,  it is the condition which ensures that the natural diagonal subalgebra $\mathcal D$ is maximal abelian, hence a Cartan subalgebra. Moreover, it results from  \cite{kpr:graphs} that this subalgebra has the extension property if and only if the graph contains no loops. Condition (II) of \cite{cun:markov} or its generalization (K) in \cite{kprr:graphs} implies that each reduction of the groupoid to an invariant closed subset is essentially principal and therefore that the image of $\mathcal D$ in the corresponding quotient is still maximal abelian.

\subsection{Cartan subalgebras in continuous-trace C$^*$-algebras} Let us first observe that a Cartan subalgebra of a continuous-trace C$^*$-algebra necessarily has the unique extension property. The proof given in \cite[Th\'eor\`eme 3.2]{fac:remarques} for foliation C$^*$-algebras is easily adapted.

\begin{prop} Let $B$ be a Cartan subalgebra of a continuous-trace C$^*$-algebra $A$. Then $B$ has the unique extension property.
\end{prop}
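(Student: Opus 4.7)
I would begin by invoking \thmref{Cartan} to identify $(A,B)$ with $(C^*_r(G,\Sigma),C_0(G^{(0)}))$ for some essentially principal second countable Hausdorff \'etale groupoid $G$ equipped with a twist $\Sigma$, and then apply the preceding proposition to reduce the statement to the claim that $G$ must be principal whenever $A$ has continuous trace. So the real task is: derive a contradiction from the assumption that some $x_0\in G^{(0)}$ has nontrivial isotropy group $G_{x_0}^{x_0}$.

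The strategy combines Hausdorffness of the spectrum $\hat A$, which is implied by continuous trace, with a standard convergence argument for the regular representations $\pi_y$. On $L^2(G_{x_0},L_{x_0})$ the isotropy group $G_{x_0}^{x_0}$ acts on the right by twisted translations; this action commutes with $\pi_{x_0}$ and decomposes $\pi_{x_0}$ into irreducible summands indexed by the irreducible projective representations of $G_{x_0}^{x_0}$ for the cocycle coming from $\Sigma$. Nontrivial isotropy then yields at least two inequivalent such summands $\pi_{x_0,1}$ and $\pi_{x_0,2}$, hence distinct points $[\pi_{x_0,1}]\neq[\pi_{x_0,2}]$ of $\hat A$ sitting above $x_0$. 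By essential principality the set of units with trivial isotropy is dense in $G^{(0)}$, so I may pick a sequence $y_k\to x_0$ with each $\pi_{y_k}$ irreducible; using the identities $\pi_y(f)\epsilon_y=f_{|\Sigma_y}$ and $f(\sigma)=\langle\epsilon_\sigma,\pi_{s(\sigma)}(f)\epsilon_{s(\sigma)}\rangle$ recalled just before \propref{conditional expectation}, a direct matrix-coefficient computation shows that every irreducible summand $\pi_{x_0,i}$ occurs as a Fell-topology limit of $([\pi_{y_k}])$. Hence the single sequence $([\pi_{y_k}])$ converges simultaneously to $[\pi_{x_0,1}]$ and to $[\pi_{x_0,2}]$, contradicting the Hausdorffness of $\hat A$. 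Thus $G$ is principal and, by the previous proposition, $B$ has the unique extension property.

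\textbf{Main obstacle.} The delicate step is checking rigorously both the pairwise inequivalence of the summands of $\pi_{x_0}$ and the simultaneous Fell-topology convergence $[\pi_{y_k}]\to[\pi_{x_0,i}]$ for every $i$. Both assertions are routine in spirit but require careful bookkeeping with the ${\bf T}$-extension of $G_{x_0}^{x_0}$ prescribed by the twist $\Sigma$; once one transports vectors supported near the torsor $G_{x_0}^y$ back to $L^2(G_{y_k},L_{y_k})$ by means of open bisections covering $G_{x_0}^{x_0}$ (whose existence is granted by the \'etale structure), both claims follow from the explicit formula for $\pi_y$.
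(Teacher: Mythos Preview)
Your reduction via \thmref{Cartan} and the preceding proposition is correct, and the idea of exhibiting two Fell-limits of a single net $([\pi_{y_k}])$ in $\hat A$ is natural.  However, the step ``nontrivial isotropy then yields at least two inequivalent such summands'' is a genuine gap, not routine bookkeeping.  The commutant of $\pi_{x_0}$ is the right $\sigma$-twisted von Neumann algebra of the isotropy group $\Gamma=G_{x_0}^{x_0}$, so the isotypical decomposition of $\pi_{x_0}$ is governed by the $\sigma$-projective dual of $\Gamma$, where $\sigma$ is the $2$-cocycle that $\Sigma$ induces on $\Gamma$.  But a nontrivial finite group can have a \emph{single} irreducible $\sigma$-projective representation when $\sigma$ is totally skew (for instance $\Gamma={\bf Z}/2\times{\bf Z}/2$ with the Heisenberg cocycle, where ${\bf C}[\Gamma,\sigma]\cong M_2({\bf C})$).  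In that situation $\pi_{x_0}\cong m\cdot\rho$ with $m>1$ and a \emph{unique} irreducible summand $\rho$; your Fell-convergence argument then produces only $[\pi_{y_k}]\to[\rho]$, which is perfectly compatible with $\hat A$ being Hausdorff.  So Hausdorffness of $\hat A$ alone --- the only consequence of continuous trace you invoke --- does not suffice to rule out nontrivial isotropy in the twisted case.

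The paper's argument avoids this trap by using a strictly stronger consequence of continuous trace: for $h\in C_c(G^{(0)})$ lying in the Pedersen ideal, the function $[\pi]\mapsto{\rm Tr}\,\pi(h)$ is finite and \emph{continuous} on $\hat A$.  After invoking nuclearity to get amenability of $G$ and Clark's CCR result to see that orbits are closed (hence discrete) and that $G^{(0)}/G$ embeds in $\hat A$, one computes ${\rm Tr}\,\pi_{[x]}(h)=\sum_{y\in[x]}h(y)$ and obtains a jump of this trace at the offending orbit.  That argument detects the multiplicity $m>1$ even when $\pi_{x_0}$ has a single isotypical component, which is exactly the case your Hausdorffness argument misses.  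Your approach could presumably be repaired by supplementing the non-Hausdorff case with a separate rank/trace-jump argument in the ``single summand with multiplicity'' case, but at that point you are essentially reproducing the paper's method.
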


\begin{proof} From the main theorem, we can assume that $(A,B)=(C^*_r(G,\Sigma),C_0(G^{(0)}))$, where $G$ is an \'etale essentially principal Hausdorff  groupoid and $\Sigma$ is a twist over $G$. Since $A$ is nuclear, we infer from 
\cite[6.2.14, 3.3.7]{dr:amenable} that $G$ is topologically amenable and from \cite[5.1.1]{dr:amenable} that all its isotropy subgroups are amenable.  Since $A$ is CCR, we infer from \cite[Section 5,]{cla:GCR} that $G^{(0)}/G$ injects continuously in $\hat A$ and that all the orbits of $G$ are closed (the presence of a twist does not affect this result nor its proof). Since $G$ is \'etale, these closed orbits are discrete. Now, each $h\in C_c(G^{(0)})$ belongs to the Pedersen ideal $K(A)$. Therefore, it defines a continuous function on $\hat A$ whose value at $[x]\in G^{(0)}/G$ is 
$${\overline h}[x]=\sum_{y\in [x]} h(y).$$ 
Suppose that $G(x)$ is not reduced to $\{x\}$. Then there exists an open neighborhood $V$ of $x$ such that $[x]\cap V=\{x\}$ and $[y]\cap V$ contains at least two elements for $y\not=x$. 
For $h\in C_c(G^{(0)})$ supported in $V$ and equal to 1 on a neighborhood of $x$, we would obtain ${\overline h}[x]=1$ and ${\overline h}[y]\ge 2$ for $y$ close to $x$, which contradicts the continuity of ${\overline h}$. Hence $G$ is principal and $B$ has the unique extension property.
\end{proof}
When a Cartan subalgebra $B$ of a continuous-trace C$^*$-algebra $A$ exists, the cohomology class $[\Sigma(B)]$ of  its twist is essentially the Dixmier-Douady invariant of $A$. Indeed, just as in the group case, the groupoid extension $\Sigma(B)$ defines an element of the cohomology group $H^2(G(B),{\bf T})$ (see \cite{tu:cohomology} for a complete account of groupoid cohomology). Since $G(B)$ is equivalent to $\hat B/G(B)=\hat A$, this can be viewed as an element of $H^2(\hat A,{\mathcal T})$, where $\mathcal T$ is the sheaf of germs of ${\bf T}$-valued continuous functions. Its identification with the Dixmier-Douady invariant is done in
 in \cite{kum:diagonals, ren:dual, rt:dd}. Moreover, a simple construction shows that every \u Cech cohomology class in $H^3(T,{\bf Z})$, where $T$ is a locally compact Hausdorff space, can be realized as the Dixmier-Douady invariant of a continuous-trace C$^*$-algebra of the above form $C^*(G,\Sigma)$.

However, Cartan subalgebras $B$ of a continuous-trace C$^*$-algebra $A$ do not always exist. It has been observed (see \cite [Remark 3.5.(iii)]{abg:extensions}) that there exist non-trivial $n$-homogeneous C$^*$-algebras which do not have a masa with the unique extension property. Therefore, these C$^*$-algebras do not have Cartan subalgebras. In \cite[Appendix]{kum:trace}, T.~Natsume gives an explicit example. Given a Hilbert bundle $H$ over a compact space $T$, let us denote by $A_H$ the continuous-trace C$^*$-algebra defined by $H$. Let $B$ be a Cartan subalgebra of $A_H$. The inclusion map gives a map $\hat B\rightarrow T$ which is a local homeomorphism and a surjection. If  $T$ is connected and simply connected, this is a trivial covering map and $B$ decomposes as a direct sum of summands isomorphic to $C(T)$. Therefore $H$ decomposes as a direct sum of line bundles. This is not always possible. For example there exists a vector bundle of rank 2 on the sphere ${\bf S}^4$ which cannot be decomposed into a direct sum of line bundles.

\subsection{Concluding remarks} Just as in the von Neumann setting, the notion of Cartan subalgebra in C$^*$-algebras provides a bridge between the theory of dynamical systems and the theory of operator algebras. Examples show the power of this notion, in particular to understand the structure of some C$^*$-algebras, but also its limits. This notion has to be modified if one wants to include the class of the C$^*$-algebras of non-Hausdorff essentially principal \'etale groupoids. In the case of continuous-trace C$^*$-algebras, we have seen that the twist attached to a Cartan subalgebra is connected with the Dixmier-Douady invariant. It would be interesting to investigate its C$^*$-algebraic significance in other situations.

\vskip3mm

\end{document}